\let\pa\partial
\let\na\nabla
\let\eps\varepsilon
\newcommand{\N}{{\mathbb N}}
\newcommand{\R}{{\mathbb R}}
\newcommand{\diver}{\operatorname{div}}
\newtheorem{theorem}{Theorem}
\newtheorem{lemma}[theorem]{Lemma}
\newtheorem{remark}[theorem]{Remark}
\begin{document}

\title[A nonlocal regularization]{A nonlocal regularization of a
generalized \\ Busenberg--Travis cross-diffusion system}

\author[A. J\"ungel]{Ansgar J\"ungel}
\address{Institute of Analysis and Scientific Computing, TU Wien, Wiedner Hauptstra\ss e 8--10, 1040 Wien, Austria}
\email{juengel@tuwien.ac.at} 

\author[M. Vetter]{Martin vetter}
\address{Institute of Analysis and Scientific Computing, TU Wien, Wiedner Hauptstra\ss e 8--10, 1040 Wien, Austria}
\email{martin.vetter@tuwien.ac.at} 

\author[A. Zurek]{Antoine Zurek}
\address{Universit\'e de Technologie de Compi\`egne, LMAC, 60200 Compi\`egne, France}
\email{antoine.zurek@utc.fr}

\date{\today}

\thanks{The first author acknowledges partial support from   
the Austrian Science Fund (FWF), grants P33010 and F65.
This work has received funding from the European 
Research Council (ERC) under the European Union's Horizon 2020 research and innovation programme, ERC Advanced Grant NEUROMORPH, no.~101018153.} 

\begin{abstract}
A cross-diffusion system with Lotka--Volterra reaction terms in a bounded domain with no-flux boundary conditions is analyzed. The system is a nonlocal regularization of a generalized Busenberg--Travis model, which describes segregating population species with local averaging. The partial velocities are the solutions of an elliptic regularization of Darcy's law, which can be interpreted as a Brinkman's law. The following results are proved: the existence of global weak solutions; localization limit; boundedness and uniqueness of weak solutions (in one space dimension); exponential decay of the solutions. Moreover, the weak--strong uniqueness property for the limiting system is shown.
\end{abstract}

\keywords{Population dynamics, cross-diffusion systems, nonlocal regularization, entropy structure, existence of weak solutions, uniqueness of solutions, large-time behavior of solutions.}  
 
\subjclass[2000]{35K20, 35K59, 35Q92, 92D25.}

\maketitle


\section{Introduction}

Multi-species segregating populations can be modeled by cross-diffusion systems, which are derived from interacting particle systems in the diffusion limit \cite{CDJ19}. Such a model was suggested and analyzed by Busenberg and Travis \cite{BuTr83}. Their system consists of mass balance equations with velocities that are given by Darcy's law with density-dependent pressure functions. Grindrod \cite{Gri88} has replaced Darcy's law by Brinkman's law to average the velocity locally, and he has added Lotka--Volterra reaction terms. This leads to nonlocal reaction-cross-diffusion systems. While there are some works on the single-species nonlocal problem (see, e.g., \cite{Nas14}), only spatial pattern and traveling-wave solutions have been studied for the nonlocal multi-species model \cite{KrVa20,KKMGV17,TKKV20}. In this paper, we contribute to the mathematical analysis of the nonlocal multi-species system by proving global existence and uniqueness results and by investigating the qualitative behavior of the solutions.

\subsection{Model setting}

The evolution equations for the population densities $u_i=u_i(x,t)$ read as
\begin{align}
  \pa_t u_i - \sigma\Delta u_i + \diver(u_i v_i) &= u_if_i(u), 
  \label{1.eq1} \\
  -\eps\Delta v_i + v_i &= -\na p_i(u)\quad\mbox{in }\Omega,\ t>0,\
  i=1,\ldots,n, \label{1.eq2}
\end{align}
where $u=(u_1,\ldots,u_n)$, supplemented with the initial, no-flux, and homogeneous Dirichlet boundary conditions
\begin{align}\label{1.bic}
  u_i(0)=u_i^0\mbox{ in }\Omega, \quad 
  (\sigma\na u_i + u_iv_i)\cdot\nu = 0,\ v_i=0 
  \mbox{ on }\pa\Omega,\ t>0,
\end{align}
where $i=1,\ldots,n$ and $\nu$ is the exterior unit normal vector to $\pa\Omega$. The source terms in \eqref{1.eq1} are of Lotka--Volterra form with
\begin{align}\label{1.LV}
  f_i(u) = b_{i0} - \sum_{j=1}^n b_{ij}u_j, \quad i=1,\ldots,n,
\end{align}
where $b_{i0}$, $b_{ij}\ge 0$, and the partial pressure functions $p_i$ are given by
\begin{align}\label{1.p}
  p_i(u) = \sum_{j=1}^n a_{ij}u_j, \quad i=1,\ldots,n,
\end{align}
where the matrix $(a_{ij})$ is assumed to be positive definite. Besides the nonlocal coupling, a further difficulty of system \eqref{1.eq1} is that the diffusion matrix $(u_ia_{ij})$ is generally neither symmetric nor positive (semi-)definite. This difficulty can be overcome by exploiting the underlying entropy structure, as detailed below.

Introducing the self-adjoint solution operator $L_\eps:H^1(\Omega)'\to H^1(\Omega)'$ by $L_\eps(g)=v$, where $v\in H^1_0(\Omega)$ is the unique solution to
\begin{align*}
  -\eps\Delta v + v = g\quad\mbox{in }\Omega, \quad
  v = 0\quad\mbox{on }\pa\Omega,
\end{align*}
we can formulate system \eqref{1.eq1}--\eqref{1.eq2} via $v_i=-L_\eps(\na p_i(u))$ as
\begin{align*}
  \pa_t u_i = \sigma\Delta u_i + \diver\big(u_i L_\eps(\na p_i(u))\big)
  + u_if_i(u) \quad\mbox{in }\Omega,\ t>0.
\end{align*}
We have chosen Dirichlet boundary conditions for $v$ to obtain bounded weak solutions to \eqref{1.eq1}--\eqref{1.p} in one space dimension, which is needed to derive $L^\infty(\Omega)$ regularity for the solutions to the nonlocal problem \eqref{1.eq1}--\eqref{1.p}; see Theorem \ref{thm.bound} and the following commentary. Other boundary conditions such as Neumann conditions are also possible. 

In the case $\eps=0$, we recover a generalized Busenberg--Travis model,
\begin{align}\label{1.limeq}
  \pa_t u_i -\sigma \Delta u_i + \diver(u_i v_i) = u_if_i(u), \quad
  v_i = -\na p_i(u) = -\sum_{j=1}^n a_{ij}\na u_j.
\end{align}
Observe that the velocity is defined by Darcy's law, $v_i=-\na p_i(u)$. More precisely, system \eqref{1.limeq} was proposed by Busenberg and Travis with $p_i(u)=k_i\sum_{j=1}^n u_j$, where $k_i>0$. Since the matrix with entries $a_{ij}=k_i$ is of rank one only, system \eqref{1.limeq} turns out to be of mixed hyperbolic--parabolic type \cite{DHJ23}. We consider in this paper positive definite matrices $(a_{ij})$ and call the corresponding equations a generalized Busenberg--Travis system.

Grindrod suggested to smooth sharp spatial variations in $\na p_i(u)$, leading to equation \eqref{1.eq2} with $\eps>0$. This equation can be interpreted as Brinkman's law, originally proposed in \cite{Bri49} to define the viscous force exerted on porous media flow. This law corresponds to the incompressible Navier--Stokes equations if the inertial terms are neglected and a relaxation term is added, where $\eps$ represents the viscosity of the fluid. With Brinkman's law, system \eqref{1.eq1}--\eqref{1.eq2} becomes nonlocal.

Equations \eqref{1.eq1}--\eqref{1.eq2} have been investigated in the literature only regarding its linear stability \cite{KKMGV17}, spatiotemporal pattern \cite{TKKV20}, and traveling-wave solutions \cite{KrVa20}. In the single-species case $n=1$ and in one space dimension, the limit $\sigma\to 0$ of vanishing self-diffusion was performed rigorously in \cite{Nas14}. The two-species case was analyzed in \cite{Lut02}, with the right-hand side of \eqref{1.eq2} replaced by $F(u_i,\na u_i)$, where $F$ is some bounded function. This system is different from our problem, since the coupling in \cite{Lut02} is weaker than in our case. The work \cite{DEF18} studies equations \eqref{1.eq1} for $n=2$ with $v_i=\na W_i*u_i$, where $W_i$ are smooth interaction kernels. In \cite{DDMS24,DeSc20}, the velocity is assumed to be a gradient, $v_i=\na w_i$, where $w_i$ solves an elliptic problem. Thus, up to our knowledge, the existence analysis for system \eqref{1.eq1}--\eqref{1.eq2} seems to be new.

\subsection{Mathematical tools}

Our most important mathematical tool is the entropy method. Using the Boltzmann--Shannon entropy
\begin{align*}
  H_1(u) = \sum_{i=1}^n\int_\Omega u_i(\log u_i-1)dx,
\end{align*}
a formal computation, made rigorous on an approximate level in Section \ref{sec.ex}, shows that
\begin{align}\label{1.dH1dt}
  \frac{dH_1}{dt}(u) &+ 4\sigma\sum_{i=1}^n\int_\Omega
  |\na\sqrt{u_i}|^2 dx + \sum_{i,j=1}^n\int_\Omega a_{ij}
  K_\eps(\na u_i)\cdot K_\eps(\na u_j)dx \\
  &+ \sum_{i=1}^n\int_\Omega b_{ii}u_i^2\log u_i dx
  = \sum_{i=1}^n\int_\Omega u_i(f_i(u)-b_{ii}u_i)\log u_i dx, \nonumber
\end{align}
where $K_\eps$ is the square root operator associated to $L_\eps$, i.e.\ $K_\eps\circ K_\eps=L_\eps$. Assuming that $(a_{ij})$ is positive definite, the third term on the left-hand side is nonnegative. If $b_{ii}>0$, the fourth term on the left-hand side provides an $L^1(\Omega)$ bound for $u_i^2\log u_i$, which is needed to prove the strong convergence of a sequence of approximating solutions in $L^2(\Omega)$. The right-hand side of \eqref{1.dH1dt} is bounded by $H_1(u)$, up to a factor, such that we can apply Gronwall's lemma to obtain a priori bounds and estimates uniform in $\eps$. 

Like its local counterpart \cite{JuZu20}, system \eqref{1.eq1}--\eqref{1.eq2} possesses a second entropy, the nonlocal Rao entropy
\begin{align*}
  H_2(u) = \sum_{i,j=1}^n\int_\Omega a_{ij} K_\eps(u_i)K_\eps(u_j)dx.
\end{align*}
Notice that in the limit $\eps\to 0$, $K_\eps$ converges formally to the unit operator on $H^1(\Omega)'$ such that $H_2(u)$ becomes in that limit the (local) Rao entropy $\sum_{i,j=1}^n\int_\Omega a_{ij}u_iu_j dx$. Thus, the nonlocal Rao entropy does not provide better bounds than in $L^2(\Omega)$. Unfortunately, the (formal) entropy identity
\begin{align*}
  \frac{dH_2}{dt}(u) &+ \sigma\sum_{i,j=1}^n\int_\Omega
  K_\eps(\na u_i)\cdot K_\eps(\na u_j)dx
  + \sum_{i=1}^n\int_\Omega u_i|\na L_\eps(u_i)|^2 dx \\
  &= \sum_{i=1}^n\int_\Omega u_if_i(u)L_\eps(u_i)dx,
\end{align*}
does not provide useful additional estimates. However, we can still use it to show the uniqueness of bounded weak solutions. In this case, we work with the relative nonlocal Rao entropy 
\begin{align}\label{1.H2rel}
  H_2(u|\bar{u}) = \sum_{i,j=1}^n\int_\Omega a_{ij}K_\eps(u_i-\bar{u}_i)
  K_\eps(u_j-\bar{u}_j)dx,
\end{align}
where $u$ and $\bar{u}$ are two (bounded weak) solutions to \eqref{1.eq1}--\eqref{1.p}. The idea is to show that
$(dH_2/dt)(u|\bar{u}) \le CH_2(u|\bar{u})$ for $t>0$
and to apply Gronwall's inequality as well as the positive definiteness of $(a_{ij})$ to conclude that $u=\bar{u}$. 

The boundedness assumption for the uniqueness result cannot be easily dropped. A possible strategy, due to Fischer \cite{Fis17}, is to work with the approximate relative Boltzmann--Shannon entropy
\begin{align*}
  H_L(u|\bar{u}) = \sum_{i=1}^n\int_\Omega\big(u_i\log u_i
  - \phi_L(u)u_i\log\bar{u}_i - (u_i-\bar{u}_i)\big)dx,
\end{align*}
where $\phi_L$ is a suitable cutoff function. Then $H_L(u|\bar{u})$ is bounded from below by the $L^2(\Omega)$ norm of $(u-\bar{u})\mathrm{1}_{\{U\le L\}}$, where $U:=\sum_{i=1}^n u_i$ is the total density, which allows for the estimate $dH_L/dt\le CH_L$ for some constant $C>0$, and Gronwall's lemma yields $u=\bar{u}$. Unfortunately, this procedure breaks down in the nonlocal case. 

\subsection{Main results}

We impose the following assumptions:
\begin{itemize}
\item[(A1)] Domain: $\Omega\subset\R^d$ ($d\ge 1$) is a bounded Lipschitz domain and $T>0$. We set $\Omega_T=\Omega\times(0,T)$.
\item[(A2)] Parameters: $(a_{ij})$ is symmetric and positive definite with smallest eigenvalue $\alpha>0$, $b_{i0}$, $b_{ij}\ge 0$ for $i\neq j$, $b_{ii}>0$ for $i=1,\ldots,n$, $\sigma>0$, and $\eps >0$.
\item[(A3)] Initial data: $u_i^0\in L^1(\Omega)$ satisfies $u_i^0\ge 0$ in $\Omega$ and $u_i^0\log u_i^0\in L^1(\Omega)$ for $i=1,\ldots,n$.
\end{itemize}

Our first result is the global existence of weak solutions. 

\begin{theorem}[Existence of solutions]\label{thm.ex}
Let Assumptions (A1)--(A3) hold. Then there exists a weak solution $u=(u_1,\ldots,u_n)$ to \eqref{1.eq1}--\eqref{1.p} satisfying 
\begin{align*}
  u_i\in L^2(\Omega_T)\cap L^{4/3}(0,T;W^{1,4/3}(\Omega)), 
  \quad \pa_t u_i\in L^1(0,T;H^{m'}(\Omega)'),
\end{align*}
for $i=1,\ldots,n$, where $m'>d/2+1$, the entropy inequality
\begin{align}\label{1.ei}
  \sum_{i=1}^n&\bigg(\sup_{0<t<T}\int_\Omega u_i(\log u_i-1)dx 
  + 4\sigma\int_0^T\int_\Omega|\na\sqrt{u_i}|^2 dxdt
  + C\int_0^T\int_\Omega|K_\eps(\na u_i)|^2 dxdt \\
  &+ b_{ii}\int_0^T\int_\Omega u_i^2\log u_i dxdt\bigg) \le C(T),
  \nonumber
\end{align}
where $C(T)>0$ also depends on the $L^1(\Omega)$ norm of $u_j^0\log u_j^0$ for $j=1,\ldots,n$, and the following regularity holds:
\begin{align*}
  v_i = -L_\eps(\na p_i(u))\in L^2(0,T;H_0^1(\Omega)).
\end{align*}
\end{theorem}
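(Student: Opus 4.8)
The plan is to construct solutions by a multi-level approximation scheme, combining a Galerkin-type or implicit-time discretization with an entropy regularization, and then pass to the limit using the a~priori estimates encoded in the entropy inequality \eqref{1.ei}. More precisely, I would work with the entropy variables $w_i=\log u_i$ (or the shifted version $w_i=\pa H_1/\pa u_i$) so that $u_i=e^{w_i}>0$ automatically, which both enforces nonnegativity and symmetrizes the structure. To get a well-posed approximate problem I would add a small $\delta$-regularization (e.g.\ a term $-\delta(\Delta w_i - w_i)$ or $-\delta\,\pa_t(\text{something})$, as in the boundedness-by-entropy method of J\"ungel) and discretize time with step $\tau$, solving at each step a nonlinear elliptic system for $w_i^k$ via the Leray--Schauder fixed-point theorem; coercivity of the regularized bilinear form and the entropy estimate give the needed a~priori bound for the fixed-point argument. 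The elliptic operator $L_\eps$ is bounded and self-adjoint on $H^1(\Omega)'$ with $K_\eps$ its square root, so the nonlocal terms $\diver(u_i L_\eps(\na p_i(u)))$ cause no difficulty at the approximate level: $v_i=-L_\eps(\na p_i(u))\in L^2(0,T;H_0^1(\Omega))$ follows directly from elliptic regularity once $\na p_i(u)=\sum_j a_{ij}\na u_j$ is controlled in $L^2(\Omega_T)$, which the $K_\eps(\na u_i)$ term in \eqref{1.ei} supplies (using $\|K_\eps(g)\|_{L^2}^2 = \langle L_\eps g, g\rangle$ and positive definiteness of $(a_{ij})$ with eigenvalue bound $\alpha>0$).

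Next I would derive the discrete analogue of the entropy identity \eqref{1.dH1dt} by testing the approximate equation with $w_i^k$ (the discrete entropy variable), using convexity of $H_1$ to handle the discrete time derivative and the chain-rule inequality $\int_\Omega \na u_i\cdot\na w_i\,dx = 4\int_\Omega|\na\sqrt{u_i}|^2\,dx$ for the self-diffusion term. The reaction contribution $\sum_i \int_\Omega u_i(f_i(u)-b_{ii}u_i)\log u_i\,dx$ on the right-hand side is estimated, after splitting $\log u_i=\log^+u_i-\log^-u_i$, by $C + C H_1(u) + \tfrac12 b_{ii}\int_\Omega u_i^2\log u_i\,dx$ (the last term absorbed into the good fourth term on the left), yielding via the discrete Gronwall lemma uniform-in-$(\tau,\delta,\eps)$ bounds: $u_i$ bounded in $L^\infty(0,T;L^1\log L^1)$, $\sqrt{u_i}$ bounded in $L^2(0,T;H^1)$, $u_i^2\log u_i$ bounded in $L^1(\Omega_T)$, and $K_\eps(\na u_i)$ bounded in $L^2(\Omega_T)$. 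From $\sqrt{u_i}\in L^2(0,T;H^1)$ and $u_i\in L^\infty(0,T;L^1)$ one gets $u_i\in L^2(\Omega_T)$ by Gagliardo--Nirenberg, and the gradient bound $u_i\in L^{4/3}(0,T;W^{1,4/3})$ follows from $\na u_i = 2\sqrt{u_i}\,\na\sqrt{u_i}$ and H\"older. The time-derivative bound $\pa_t u_i\in L^1(0,T;H^{m'}(\Omega)')$ with $m'>d/2+1$ comes from testing the equation with smooth functions, using the embedding $H^{m'}(\Omega)\hookrightarrow W^{1,\infty}(\Omega)$ to control $\int_\Omega u_i v_i\cdot\na\varphi\,dx$ by $\|u_i\|_{L^1}\|v_i\|_{L^2}\cdots$ or rather $\|u_iv_i\|_{L^1}$, and bounding $\|u_i v_i\|_{L^1}\le \|u_i\|_{L^2}\|v_i\|_{L^2}$ together with the reaction and diffusion terms.

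Then I would pass to the limit, first $\tau\to 0$ (and $\delta\to 0$) and then keeping $\eps>0$ fixed. The crucial compactness input is the Aubin--Lions--Simon lemma: $u_i$ bounded in $L^{4/3}(0,T;W^{1,4/3}(\Omega))$ and $\pa_t u_i$ bounded in $L^1(0,T;H^{m'}(\Omega)')$ give relative compactness of $(u_i)$ in, say, $L^1(\Omega_T)$, hence a.e.\ convergence along a subsequence; combined with the $L^1(\Omega_T)$ bound on $u_i^2\log u_i$ (which gives equi-integrability of $u_i^2$ by de la Vall\'ee-Poussin), Vitali's theorem upgrades this to strong convergence $u_i\to u_i$ in $L^2(\Omega_T)$. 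This is exactly what is needed to pass to the limit in the quadratic reaction terms $u_iu_j$, in the products $u_i\na u_j$ (using weak $L^{4/3}$ convergence of $\na u_j$ against the strong $L^q$ convergence of $u_i$ for suitable $q$), and in $v_i=-L_\eps(\na p_i(u))$ since $L_\eps$ is a bounded linear operator and $\na p_i(u)\rightharpoonup \na p_i(u)$ weakly. The entropy inequality \eqref{1.ei} survives the limit by weak lower semicontinuity of the convex entropy and of the $L^2$-norms of $\na\sqrt{u_i}$ and $K_\eps(\na u_i)$, and of the convex functional $s\mapsto s^2\log s$ on nonnegative arguments (via Fatou combined with a.e.\ convergence). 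I expect the main obstacle to be the rigorous justification at the approximate level that the test function $w_i=\log u_i$ is admissible and that the discrete entropy dissipation controls the $K_\eps(\na u_i)$ term --- i.e.\ correctly interpreting the bilinear form $\sum_{i,j}\int_\Omega a_{ij}K_\eps(\na u_i)\cdot K_\eps(\na u_j)\,dx$ arising from $\sum_i\int_\Omega u_i v_i\cdot\na w_i\,dx = \sum_i\int_\Omega v_i\cdot\na u_i\,dx = -\sum_{i,j}\int_\Omega a_{ij} L_\eps(\na u_j)\cdot\na u_i\,dx = -\sum_{i,j}\int_\Omega a_{ij}K_\eps(\na u_j)\cdot K_\eps(\na u_i)\,dx$, together with ensuring all these quantities are finite on the approximate (finite-dimensional or $\delta$-regularized) level so that the estimates are not merely formal.
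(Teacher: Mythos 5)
Your plan takes a genuinely different route from the paper's proof, but it has a gap at the approximate level that is precisely the point the paper's construction is designed to close. The system has no bounded state space: the Boltzmann--Shannon entropy only controls $u_i$ in $L^1\log L^1$, not in $L^\infty$, so passing to entropy variables $w_i=\log u_i$ does not by itself make $u_i=e^{w_i}$ bounded. With your proposed second-order regularization $-\delta(\Delta w_i-w_i)$ you only obtain $w_i\in H^1(\Omega)$ at each time step; for $d\ge 3$ the composition $e^{w_i}$ need not even be integrable, so the velocity $v_i=-L_\eps(\na p_i(e^{w}))$ and the transport term $\diver(e^{w_i}v_i)$ are uncontrolled in the time-step elliptic problem, and the Leray--Schauder fixed point does not close. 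The paper instead keeps the primitive variables, truncates the nonlinearities at level $N$, and -- crucially -- replaces $L_\eps$ by a higher-order regularization $L^\eta_\eps$ mapping into $H^m(\Omega)\hookrightarrow L^\infty(\Omega)$, so that the approximate velocities are bounded. This boundedness is what makes Lemma~\ref{lem.Linfty} (an Alikakos iteration giving $\|u^N_i\|_{L^\infty(\Omega_T)}\le C(\eta)$ uniformly in $N$) go through, which is in turn what lets one send $N\to\infty$. To repair your route you would need either (i) a genuine finite-dimensional Galerkin scheme for $w_i$, in which $u_i=e^{w_i}$ is automatically smooth and bounded (you gesture at this with ``Galerkin-type'' but do not commit, and the detail of deriving the entropy identity in the Galerkin limit is not trivial), or (ii) a higher-order $\delta$-regularization of $w_i$ with $m>d/2$ together with an $L^\infty$ bound on $u_i$ uniform in the truncation, at which point you are effectively reconstructing the paper's $L^\eta_\eps$ and Lemma~\ref{lem.Linfty}. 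Related to this, your key identity $\sum_i\int_\Omega u_iv_i\cdot\na w_i\,dx=-\sum_{i,j}a_{ij}\int_\Omega K_\eps(\na u_j)\cdot K_\eps(\na u_i)\,dx$ requires $\na u_i\in L^2(\Omega)$ at the approximate level; the paper gets this from the cutoff and the $L^\infty$ bound via $\na u_i^N=[(u_i^N)_+^N]^{1/2}\na S_N^{1/2}(u_i^N)$, and your scheme must supply the same regularity.

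Two secondary observations. First, the paper extracts the limit of $\pa_t u_i^\eta$ via a variant of Helly's selection theorem (Theorem~\ref{thm.helly}) because $\pa_t u_i^\eta$ is bounded only in the nonreflexive space $L^1(0,T;W^{1,\infty}(\Omega)')$; your alternative of invoking Aubin--Lions--Simon (which does accept $L^1$-in-time bounds on $\pa_t u_i$) and then reading $\pa_t u_i$ off the limit equation is a legitimate workaround, so this is a matter of taste rather than an error. Second, once the approximate-level issue is resolved, the rest of your proposal -- the entropy dissipation estimate, absorption of the reaction terms with the $b_{ii}u_i^2\log u_i$ contribution, the de~la Vall\'ee--Poussin and Vitali arguments upgrading a.e.\ convergence to strong $L^2(\Omega_T)$ convergence, the passage to the limit in the nonlocal term using the bounded self-adjointness of $L_\eps$, and Fatou for the entropy inequality -- matches the paper's Sections~\ref{sec.ex} and~\ref{sec.eta} closely and is correct.
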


The theorem is shown by using the Leray--Schauder fixed-point theorem for an approximate problem, introducing some cutoff in the nonlinearities and regularizing the nonlocal operator by some operator $L^\eta_\eps$ with parameter $\eta>0$. This regularization ensures that the approximate velocities are bounded. The compactness of the fixed-point operator is obtained from an approximate entropy inequality similar to \eqref{1.dH1dt}. The first difficulty of the existence proof is to show that the cutoff in the nonlinear terms can be removed. For this, we exploit the fact that the operator $L^\eta_\eps$ maps $W^{-1,1}(\Omega)$ to $L^\infty(\Omega)$, where we define $W^{-1,1}(\Omega):=\{g+\diver h\in\mathcal{D}'(\Omega):g\in L^1(\Omega),\,h\in L^1(\Omega;\R^d)\}$. As shown in Theorem~\ref{thm.bound}, this property allows us to prove that the regularized densities are bounded such that we can get rid of the cutoff functions. The other technical difficulty comes from the deregularization limit $\eta\to 0$, since the time derivative $\pa_t u_i$ is an element of the nonreflexive space $L^1(0,T;W^{1,\infty}(\Omega)')$ only such that we cannot extract a converging subsequence. The idea is to prove a limit in the larger space of functions of bounded variation by using a variant of Helly's selection theorem (see Theorem \ref{thm.helly} in Appendix \ref{sec.app}).

As previously explained, the existence proof relies strongly on the fact that the regularization operator $L^\eta_\eps$ maps $W^{-1,1}(\Omega)$ to $L^\infty(\Omega)$ in any space dimension. Thus, it is natural to study if this property holds when $\eta \to 0$. We are able to show that $L_\eps:W^{-1,1}(\Omega)\to L^\infty(\Omega)$ but only in one space dimension; see Lemma \ref{lem.ell} in Appendix \ref{sec.app}. Then, our second main result states that the weak solution constructed in Theorem \ref{thm.ex} turns out to be bounded at least if $d=1$.

\begin{theorem}[Boundedness of solutions]\label{thm.bound}
Let Assumptions (A1)--(A2) hold and let $u^0\in L^\infty(\Omega;\R^n)$ be nonnegative componentwise. Furthermore, let $L_\eps$ map from $W^{-1,1}(\Omega)$ to $L^\infty(\Omega)$ (this holds true if $d=1$). Then the solution $u$ to \eqref{1.eq1}--\eqref{1.p} constructed in Theorem \ref{thm.ex} satisfies $u_i\in L^\infty(0,T;$ $L^\infty(\Omega))$, $i=1,\ldots,n$.  
\end{theorem}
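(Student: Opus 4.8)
The plan is to establish the $L^\infty$ bound directly on the approximate solutions constructed in the proof of Theorem \ref{thm.ex}, uniformly in the approximation parameters, and then pass to the limit. I would fix the approximation level (recall that the approximate problem uses the regularized operator $L^\eta_\eps$, which is stated to map $W^{-1,1}(\Omega)\to L^\infty(\Omega)$; at the level of the deregularized limit in $d=1$ we instead use Lemma \ref{lem.ell}, which gives the same mapping property for $L_\eps$ itself). The key observation is that since $u^0\in L^\infty(\Omega;\R^n)$ is nonnegative, and since $v_i=-L_\eps(\na p_i(u))$, each velocity $v_i$ is controlled in $L^\infty(\Omega)$ by the $W^{-1,1}(\Omega)$-norm of $\na p_i(u)=\sum_j a_{ij}\na u_j$, which in turn is controlled by the $L^1(\Omega)$-norm of $p_i(u)$, i.e.\ by the total mass. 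The entropy inequality \eqref{1.ei} already furnishes a uniform mass bound (and in fact a conserved/controlled $L^1$ bound via integration of \eqref{1.eq1} together with the Lotka--Volterra structure), so $\|v_i\|_{L^\infty(0,T;L^\infty(\Omega))}\le C$ uniformly. Thus each scalar equation \eqref{1.eq1} becomes a linear parabolic equation $\pa_t u_i-\sigma\Delta u_i+\diver(u_i v_i)=u_if_i(u)$ with a \emph{given} bounded drift $v_i$ and a reaction term that is affine in $u$ with the crucial sign property that $f_i(u)=b_{i0}-\sum_j b_{ij}u_j\le b_{i0}$ (since $b_{ij}\ge 0$ and $u_j\ge 0$).

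With $v_i$ bounded, I would run a Moser-type iteration (or equivalently De Giorgi iteration / Stampacchia truncation) on the single equation for $u_i$. Concretely, test \eqref{1.eq1} with $(u_i-k)_+^{p-1}$ for $k\ge\|u_i^0\|_{L^\infty(\Omega)}$ and $p\ge 2$: the diffusion term gives the good $\sigma\||\na (u_i-k)_+^{p/2}\||_{L^2}^2$ contribution, the convection term $\int u_i v_i\cdot\na(u_i-k)_+^{p-1}$ is absorbed using Young's inequality and the $L^\infty$ bound on $v_i$ (here the no-flux boundary condition ensures there are no boundary contributions), and the reaction term is bounded above using $u_if_i(u)\le b_{i0}u_i$. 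Combining with the Gagliardo--Nirenberg--Sobolev inequality yields the standard recursive inequality for the level-set energies $A_k(t)=\int_\Omega (u_i-k)_+^p\,dx$, from which the iteration closes and produces $\|u_i\|_{L^\infty(\Omega_T)}\le C$, with $C$ depending only on $\|u^0\|_{L^\infty}$, $\|v_i\|_{L^\infty}$, $b_{i0}$, $\sigma$, $T$, and $|\Omega|$ — all uniform in the approximation parameters. Finally, lower semicontinuity of the $L^\infty$ norm under the (weak-$*$, a.e.) convergences already available from Theorem \ref{thm.ex} transfers the bound to the limiting solution $u$.

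The main obstacle I anticipate is the \emph{self-consistency/closure of the argument}: the $L^\infty$ bound on $v_i$ I invoked to linearize the equation relies only on the $L^1$ mass bound, which is genuinely available independently of any $L^\infty$ control on $u$ — so there is in fact no circularity, and this is exactly why the proof works in one dimension where Lemma \ref{lem.ell} gives $L_\eps:W^{-1,1}(\Omega)\to L^\infty(\Omega)$. The delicate point is rather to verify that the test functions $(u_i-k)_+^{p-1}$ are admissible at the relevant approximation level (where the regularity of $u_i$ and the form of $\pa_t u_i$ must be good enough to justify the integration by parts in time), and that the constants in the Moser iteration do not degenerate as the approximation parameters $\eta\to 0$ and the cutoff is removed. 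A clean way around the admissibility issue is to carry out the truncation argument on the \emph{approximate} problem (where $u_i$ is smooth enough and $L^\eta_\eps$ already maps into $L^\infty$), obtain a uniform bound there, and only then take limits; the uniformity is guaranteed because the Moser constants depend on the data through $\|v_i\|_{L^\infty}$, which is uniformly bounded by the mass.
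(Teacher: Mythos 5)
Your proposal is correct and follows essentially the same strategy as the paper: deduce a uniform $L^\infty$ bound on the velocities $v_i=-L_\eps(\na p_i(u))$ from the $L^\infty(0,T;L^1(\Omega))$ mass bound together with the mapping property $L_\eps:W^{-1,1}(\Omega)\to L^\infty(\Omega)$, then treat each scalar equation as a linear parabolic equation with bounded drift and close a Moser/Alikakos power iteration at the approximate level before passing to the limit. The only cosmetic difference is in the admissible test functions — the paper uses the truncated powers $S_N^\gamma(u_i)$ (tailored to the $(u_i)_+^N$ cutoff already present in the approximate scheme) and iterates over $\gamma=2^k-1$, whereas you propose the shifted truncations $(u_i-k)_+^{p-1}$ with $k\ge\|u_i^0\|_{L^\infty}$; both are standard interchangeable flavors of the same iteration.
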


The proof of Theorem \ref{thm.bound} is based on an Alikakos-type iteration procedure. Indeed, estimating the nonlinearities by the Gagliardo--Nirenberg inequality, the aim is to verify that 
\begin{align*}
  a_{\gamma+1} \le C(u^0) + (\gamma+1)^{d+2}a_{(\gamma+1)/2}^2,
  \quad\mbox{where }a_{\gamma+1} = \|u_i\|_{L^\infty(0,T;L^{\gamma+1}(\Omega))}^{\gamma+1}.
\end{align*}
This iteration can be solved explicitly, giving an estimate for $u_i$ in $L^\infty(0,T;L^{\gamma+1}(\Omega))$ uniformly in $\gamma$. The limit $\gamma\to \infty$ then concludes the proof.

In our third result, we prove the uniqueness of bounded weak solutions to~\eqref{1.eq1}--\eqref{1.p}.

\begin{theorem}[Uniqueness of weak solutions]\label{thm.unique}
\sloppy Let Assumptions (A1)--(A2) hold, $u^0\in H^1(\Omega)'$, and let $u$ and $\bar{u}$ be two nonnegative weak solutions such that $u_i$, $\bar{u}_i\in L^\infty(0,T;$ $L^\infty(\Omega))$. Then $u_i=\bar{u}_i$ in $\Omega_T$. 
\end{theorem}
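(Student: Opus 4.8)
The plan is to use the relative nonlocal Rao entropy $H_2(u|\bar u)$ defined in \eqref{1.H2rel} as a weighted distance between the two solutions and to show it satisfies a Gronwall-type differential inequality. First I would introduce the differences $w_i = u_i - \bar u_i$ and compute $\tfrac{d}{dt}H_2(u|\bar u)$ using the weak formulations of \eqref{1.eq1} for $u$ and $\bar u$. Because $K_\eps$ is self-adjoint with $K_\eps\circ K_\eps = L_\eps$, one has $\sum_{i,j}a_{ij}K_\eps(w_i)K_\eps(w_j) = \sum_{i,j}a_{ij}w_i L_\eps(w_j)$, so the natural test function in the $i$th equation is $\varphi_i := \sum_j a_{ij} L_\eps(w_j)$, which lies in $H_0^1(\Omega)$ (indeed in $H^2\cap H_0^1$) thanks to the boundedness of the $u_i$ and elliptic regularity; this is where the $L^\infty$ hypothesis is first used to make the duality pairings legitimate. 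This yields, schematically,
\begin{align*}
  \frac12\frac{d}{dt}H_2(u|\bar u)
  &+ \sigma\sum_{i,j}\int_\Omega a_{ij}\na w_i\cdot\na L_\eps(w_j)\,dx \\
  &= -\sum_{i,j}\int_\Omega a_{ij}\big(u_i v_i - \bar u_i\bar v_i\big)\cdot\na L_\eps(w_j)\,dx
   + \sum_{i,j}\int_\Omega a_{ij}\big(u_if_i(u)-\bar u_if_i(\bar u)\big)L_\eps(w_j)\,dx.
\end{align*}

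Next I would show each term on the right is controlled by $C\,H_2(u|\bar u)$ plus, for the diffusion-like contributions, a small multiple of $\sigma\sum_{i,j}\int_\Omega a_{ij}\na w_i\cdot\na L_\eps(w_j)\,dx$ (which is nonnegative by positive definiteness of $(a_{ij})$ and can hence be absorbed on the left). The reaction term is easy: $u_if_i(u)-\bar u_if_i(\bar u)$ is a quadratic polynomial in the $u_j,\bar u_j$, hence Lipschitz on the bounded range, so it is $O(|w|)$, and $\|L_\eps(w_j)\|_{L^2}\le C\|w_j\|_{H^1(\Omega)'}\le C\|w_j\|_{L^2}$ — but to match $H_2(u|\bar u)$, which measures $\|K_\eps(w_i)\|_{L^2}^2 = \langle w_i, L_\eps w_i\rangle$, I would rather bound $\int w_i L_\eps(w_j)$ directly using the Cauchy--Schwarz inequality in the $K_\eps$ inner product, giving $\le \tfrac12\sum_k (\|K_\eps w_i\|_{L^2}^2+\|K_\eps w_k\|_{L^2}^2) \le C H_2(u|\bar u)$. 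The convective term is the delicate one: writing $u_iv_i-\bar u_i\bar v_i = w_i v_i + \bar u_i(v_i-\bar v_i)$ and recalling $v_i - \bar v_i = -L_\eps(\na p_i(w))$, the first piece is handled because $v_i\in L^2(0,T;H_0^1)\hookrightarrow L^2(0,T;L^2)$ (in $d=1$ even $L^\infty$) and $\na L_\eps(w_j)$ is bounded in $L^2$ by $\|w_j\|_{H^{-1}}$; the second piece requires controlling $\int \bar u_i L_\eps(\na p_i(w))\cdot\na L_\eps(w_j)$ by integrating by parts and exploiting boundedness of $\bar u_i$ and $\na\bar u_i\in L^2$, together with the smoothing $L_\eps\colon H^{-1}\to H^1$ and $L_\eps\colon L^2\to H^2$.

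The main obstacle I anticipate is precisely this convective estimate: unlike the local case, the velocity differences involve the nonlocal operator $L_\eps$ applied to a gradient, so one cannot simply integrate by parts to transfer all derivatives onto a single factor, and one must be careful that every resulting term is genuinely quadratic in $w$ with coefficients bounded in the norms available (namely $L^\infty$ for the densities and $L^2(0,T;H^1)$ for the velocities). The key technical lemma will be a commutator-type or product estimate for $L_\eps$ — e.g. bounding $\|\bar u_i\,\na L_\eps(g)\|_{L^2}$ and $\|L_\eps(\diver(\bar u_i\,h))\|_{\cdot}$ in terms of $\|g\|_{H^{-1}}$, $\|h\|_{L^2}$ and the $W^{1,\infty}$ or $L^\infty\cap H^1$ norm of $\bar u_i$ — after which one obtains $\tfrac{d}{dt}H_2(u|\bar u)\le C(T)H_2(u|\bar u)$ with $C(T)$ depending on the (finite) $L^\infty$ and energy norms of $u,\bar u$. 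Since $H_2(u|\bar u)(0)=0$ by the common initial datum and $H_2(u|\bar u)\ge \alpha\sum_i\|K_\eps(w_i)\|_{L^2}^2\ge 0$, Gronwall's inequality forces $H_2(u|\bar u)\equiv 0$, hence $K_\eps(w_i)\equiv 0$, hence $w_i\equiv 0$ since $K_\eps$ (like $L_\eps$) is injective on $H^1(\Omega)'$; this gives $u_i=\bar u_i$ in $\Omega_T$, completing the proof.
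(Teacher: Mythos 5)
Your overall strategy -- using the relative nonlocal Rao entropy $H_2(u|\bar u)$, testing with $\varphi_i=\sum_j a_{ij}L_\eps(w_j)$, absorbing the diffusion contribution, and closing with Gronwall -- is the same as the paper's, and your handling of $I_1$ and $I_3$ is sound. But your treatment of the convective term has a genuine gap, and you yourself flag it without resolving it: you propose ``commutator-type or product estimates for $L_\eps$'' involving $W^{1,\infty}$ or $L^\infty\cap H^1$ norms of $\bar u_i$, but never produce such a lemma, and it is not clear that a direct bound on $\int \bar u_i L_\eps(\na p_i(w))\cdot\na L_\eps(w_j)\,dx$ term-by-term in $(i,j)$ can be made quadratic in $w$ with the regularity at hand.

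The step you are missing is an algebraic one, not an analytic one. Since $p_i$ is linear and $L_\eps$ commutes with $\na$, one has
$\sum_{j=1}^n a_{ij}\na L_\eps(w_j) = \na L_\eps(p_i(u)-p_i(\bar u)) = L_\eps(\na p_i(u)-\na p_i(\bar u)) = -(v_i-\bar v_i)$.
Summing over $j$ first therefore collapses the convective term to
$I_2 = -\sum_i\int_\Omega(u_iv_i-\bar u_i\bar v_i)\cdot(v_i-\bar v_i)\,dx$.
Now the decomposition (the paper uses $u_i(v_i-\bar v_i)+(u_i-\bar u_i)\bar v_i$; your $w_iv_i+\bar u_i(v_i-\bar v_i)$ works too) produces a sign-definite piece $-\sum_i\int u_i|v_i-\bar v_i|^2\,dx\le 0$ (resp. $-\sum_i\int\bar u_i|v_i-\bar v_i|^2\,dx\le 0$) that requires \emph{no estimation at all}, plus a single cross term $-\sum_i\int w_i\,\bar v_i\cdot(v_i-\bar v_i)\,dx$. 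That cross term is controlled using the $L^\infty$ bound on $\bar v$ (which follows from $\bar u\in L^\infty$ by elliptic regularity of $L_\eps$), the identity $\|w_i\|_{L^2}^2=\eps\|K_\eps(\na w_i)\|_{L^2}^2+\|K_\eps(w_i)\|_{L^2}^2$ from \eqref{3.estKL}, and $\|v-\bar v\|_{L^2}\le C\eps^{-1/2}\|K_\eps(w)\|_{L^2}$ from \eqref{2.estKL}; Young's inequality then lets the $\|K_\eps(\na w)\|_{L^2}^2$ contribution be absorbed by $I_1$. No commutator lemma, no product estimate, and no integration by parts in the convective term are needed. Without noticing the identity $\sum_j a_{ij}\na L_\eps(w_j)=-(v_i-\bar v_i)$, the rest of your plan does not close.
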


By Theorem \ref{thm.bound}, the boundedness property holds in one space dimension. Therefore, we obtain the uniqueness of weak solutions to \eqref{1.eq1}--\eqref{1.p} if $d=1$. The proof of Theorem \ref{thm.unique} relies on the relative entropy method, using the relative nonlocal Rao entropy \eqref{1.H2rel}. Differentiating this functional with respect to time and estimating $L_\eps(\na(u-v))$ in terms of $K_\eps(\na(u-v))$ yields for any $\delta>0$,
\begin{align*}
  \frac{dH_2}{dt}(u|\bar{u}) 
  + \sigma\alpha\|K_\eps(\na(u-v))\|_{L^2(\Omega)}^2 
  \le \delta\|K_\eps(\na(u-v))\|_{L^2(\Omega)}^2
  + C(\delta)\|K_\eps(u-v)\|_{L^2(\Omega)}^2,
\end{align*}
where $\alpha>0$ is the smallest eigenvalue of $(a_{ij})$. We choose $\delta<\sigma\alpha$ and apply Gronwall's lemma to infer that $H_2(u(t)|\bar{u}(t))=0$ and hence $u(t)=\bar{u}(t)$ for $t>0$. 

The fourth result is the so-called localization limit $\eps\to 0$, based on the bounds uniform in $\eps$ from the entropy inequality.  The main difficulty is the proof that $L_\eps(\na u_i^\eps)\to\na u_i$ in the space of distributions $\mathcal{D}'(\Omega)$ as $\eps\to 0$, which is shown by using the self-adjointness of $L_\eps$ and the uniform bounds from \eqref{1.eq2}. 

\begin{theorem}[Localization limit $\eps\to 0$]\label{thm.lim}
Let Assumptions (A1)--(A3) hold and let $u^\eps$ be a weak solution to \eqref{1.eq1}--\eqref{1.p} constructed in Theorem \ref{thm.ex}. Then, as $\eps \to 0$, there exists a subsequence (not relabeled) such that $u^\eps\to u$ strongly in $L^2(\Omega_T;\R^n)$, and $u=(u_1,\ldots,u_n)$ is a weak solution to \eqref{1.bic}, \eqref{1.limeq} satisfying $u_i\ge 0$ in $\Omega_T$ and, for $i=1,\ldots,n$,
\begin{align*}
  & u_i\log u_i\in L^\infty(0,T;L^1(\Omega)), \quad
  u_i^2\log u_i\in L^1(\Omega_T), \\
  & \na u_i\in L^{4/3}(\Omega_T), \quad 
  \pa_t u_i\in L^1(0,T;W^{1,\infty}(\Omega)').
\end{align*}
The initial condition holds in the sense of $W^{1,\infty}(\Omega)'$, since $u_i\in W^{1,1}(0,T;W^{1,\infty}(\Omega)')\hookrightarrow$ $C^0([0,T];W^{1,\infty}(\Omega)')$.
\end{theorem}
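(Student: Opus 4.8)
\emph{Proof sketch.} The argument rests on the $\eps$-uniform estimates contained in the entropy inequality \eqref{1.ei} and obtained from \eqref{1.eq2}, on a compactness step, and on the passage to the limit; the only genuinely delicate point is the limit of the nonlocal velocities. First, from \eqref{1.ei}, whose constant does not depend on $\eps$, one reads off the following $\eps$-uniform bounds: $\sqrt{u_i^\eps}$ is bounded in $L^2(0,T;H^1(\Omega))\cap L^\infty(0,T;L^2(\Omega))$, hence in $L^{2(d+2)/d}(\Omega_T)$ by the Gagliardo--Nirenberg inequality; $u_i^\eps$ is bounded in $L^2(\Omega_T)$ with $((u_i^\eps)^2)_\eps$ uniformly integrable (by the de la Vall\'ee--Poussin criterion applied to the $u_i^2\log u_i$ term); and $\na u_i^\eps=2\sqrt{u_i^\eps}\,\na\sqrt{u_i^\eps}$ is bounded in $L^{4/3}(\Omega_T)$, so that $u_i^\eps$ is bounded in $L^{4/3}(0,T;W^{1,4/3}(\Omega))$. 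Since $L_\eps=(-\eps\Delta+\mathrm{Id})^{-1}$ and its square root $K_\eps$ are contractions on $L^2(\Omega)$, the bound on $K_\eps(\na u_i^\eps)$ in \eqref{1.ei} shows that $v_i^\eps=-\sum_j a_{ij}K_\eps(K_\eps(\na u_j^\eps))$ is bounded in $L^2(\Omega_T)$ uniformly in $\eps$; testing \eqref{1.eq2} by $v_i^\eps$ yields in addition $\sqrt\eps\,\|\na v_i^\eps\|_{L^2(\Omega_T)}\le C$. Consequently $u_i^\eps v_i^\eps$ and $u_i^\eps f_i(u^\eps)=b_{i0}u_i^\eps-\sum_j b_{ij}u_i^\eps u_j^\eps$ are bounded in $L^1(\Omega_T)$, and testing \eqref{1.eq1} by $W^{1,\infty}(\Omega)$ functions gives $\pa_t u_i^\eps$ bounded in $L^1(0,T;W^{1,\infty}(\Omega)')$. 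Nonnegativity of $u_i^\eps$ is inherited from Theorem~\ref{thm.ex}.

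Next, I establish compactness. Combining the bound on $u_i^\eps$ in $L^{4/3}(0,T;W^{1,4/3}(\Omega))$, the compact embedding $W^{1,4/3}(\Omega)\hookrightarrow\hookrightarrow L^q(\Omega)\hookrightarrow W^{1,\infty}(\Omega)'$ for a suitable $q>1$, and the control of time translations $\|u_i^\eps(\cdot+h)-u_i^\eps\|_{L^1(0,T-h;W^{1,\infty}(\Omega)')}\le h\,\|\pa_t u_i^\eps\|_{L^1(0,T;W^{1,\infty}(\Omega)')}$, the Aubin--Lions--Simon lemma in its form allowing an $L^1$-in-time derivative yields a subsequence with $u_i^\eps\to u_i$ strongly in $L^{4/3}(0,T;L^q(\Omega))$ and a.e.\ in $\Omega_T$. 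The uniform integrability of $((u_i^\eps)^2)_\eps$ then upgrades this, through Vitali's theorem, to $u_i^\eps\to u_i$ strongly in $L^2(\Omega_T)$. Along a further subsequence, $\na u_i^\eps\rightharpoonup\na u_i$ weakly in $L^{4/3}(\Omega_T)$ and $v_i^\eps\rightharpoonup v_i$ weakly in $L^2(\Omega_T)$ for some $v_i$, while $u_i\ge 0$, $u_i\log u_i\in L^\infty(0,T;L^1(\Omega))$, $u_i^2\log u_i\in L^1(\Omega_T)$, and $\na u_i\in L^{4/3}(\Omega_T)$ follow from a.e.\ convergence, Fatou's lemma, and weak lower semicontinuity.

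Finally I pass to the limit in the weak formulation of \eqref{1.eq1} tested by $\psi\in C^\infty(\overline{\Omega}\times[0,T])$ with $\psi(T)=0$. The time term, the diffusion term, and the reaction term pass to the limit using, respectively, the strong $L^2$ convergence of $u_i^\eps$, the weak $L^{4/3}$ convergence of $\na u_i^\eps$, and the strong $L^1$ convergence of $u_i^\eps u_j^\eps$. The convective term $\int_{\Omega_T}u_i^\eps v_i^\eps\cdot\na\psi$ is the crux, and the heart of the matter is to prove that $v_i^\eps\to-\na p_i(u)$ in $\mathcal D'(\Omega_T)$. Here I use the self-adjointness of $L_\eps$: for $\Phi\in C_c^\infty(\Omega_T;\R^d)$, with $L_\eps$ acting componentwise,
\begin{align*}
  \int_{\Omega_T} L_\eps\big(\na u_j^\eps\big)\cdot\Phi\,dx\,dt
  = \int_0^T\big\langle\na u_j^\eps,\,L_\eps\Phi\big\rangle\,dt
  = -\int_{\Omega_T} u_j^\eps\,\diver\big(L_\eps\Phi\big)\,dx\,dt .
\end{align*}
Writing $L_\eps\Phi=\Phi+z_\eps$, the corrector $z_\eps\in H_0^1(\Omega;\R^d)$ solves $-\eps\Delta z_\eps+z_\eps=\eps\Delta\Phi$, and testing this equation by $z_\eps$ gives the elementary bounds $\|z_\eps\|_{L^2(\Omega_T)}\le C\sqrt\eps$ and $\|\na z_\eps\|_{L^2(\Omega_T)}\le C$; hence $\diver(L_\eps\Phi)\rightharpoonup\diver\Phi$ weakly in $L^2(\Omega_T)$, and, together with $u_j^\eps\to u_j$ strongly in $L^2(\Omega_T)$, the right-hand side above converges to $-\int_{\Omega_T}u_j\diver\Phi=\int_{\Omega_T}\na u_j\cdot\Phi$. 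This proves $L_\eps(\na u_j^\eps)\to\na u_j$ in $\mathcal D'(\Omega_T)$ and therefore $v_i^\eps=-\sum_j a_{ij}L_\eps(\na u_j^\eps)\to-\na p_i(u)$ there; comparing with the weak $L^2$ limit identifies $v_i=-\na p_i(u)\in L^2(\Omega_T)$ (so in particular $\na u_i\in L^2(\Omega_T)$, a fortiori the claimed $L^{4/3}$ regularity). Since $u_i^\eps v_i^\eps$ is a product of an $L^2$-strongly and an $L^2$-weakly convergent sequence, $u_i^\eps v_i^\eps\rightharpoonup u_iv_i$ in duality with $L^\infty(\Omega_T)$, so $\int_{\Omega_T}u_i^\eps v_i^\eps\cdot\na\psi\to\int_{\Omega_T}u_iv_i\cdot\na\psi$. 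Thus $u$ is a weak solution of \eqref{1.bic}, \eqref{1.limeq}; reading $\pa_t u_i$ off the limit equation, whose right-hand side now lies in $L^1(0,T;W^{1,\infty}(\Omega)')$, gives $\pa_t u_i\in L^1(0,T;W^{1,\infty}(\Omega)')$, whence $u_i\in W^{1,1}(0,T;W^{1,\infty}(\Omega)')\hookrightarrow C^0([0,T];W^{1,\infty}(\Omega)')$ and the initial datum is attained in $W^{1,\infty}(\Omega)'$. The two steps requiring care are this identification of the nonlocal limit --- the transfer of $L_\eps$ onto a smooth test field by self-adjointness, combined with the quantitative corrector estimate --- and, more technically, the compactness step, where the merely $L^1$-in-time control of $\pa_t u_i^\eps$ forces the use of the Aubin--Lions--Simon variant and of the uniform-integrability upgrade from $L^{4/3}$ to $L^2$.
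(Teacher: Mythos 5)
Your proposal is correct and follows essentially the same route as the paper: uniform bounds read off the entropy inequality \eqref{1.ei}, Aubin--Lions compactness upgraded to strong $L^2(\Omega_T)$ convergence via the de la Vall\'ee--Poussin/Vitali argument, and identification of the limit velocity by transferring $L_\eps$ onto a smooth test field through self-adjointness. Your quantitative corrector estimate for $L_\eps\Phi-\Phi$ is just a more explicit version of the paper's lemma asserting $\diver L_\eps(\phi)\rightharpoonup\diver\phi$ weakly in $L^2(\Omega_T)$.
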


Now, let $b=(b_{10},\ldots,b_{n0})$, $B=(b_{ij})_{i,j=1}^n$, and set $u^\infty=B^{-1}b^{\top}$. Then $f_i(u^\infty)=0$ and the relative Boltzmann--Shannon entropy is defined by
\begin{align}\label{1.H1rel}
  H_1(u|u^\infty) = \sum_{i=1}^n\int_\Omega\bigg( 
  u_i\log\frac{u_i}{u_i^\infty} - (u_i-u_i^\infty)\bigg)dx.
\end{align}
Our last result states that, under some assumptions, the solution converges exponentially fast to the constant steady state $u^\infty$.

\begin{theorem}[Large-time behavior of the nonlocal system]\label{thm.large}
Let Assumptions (A1)--(A3) hold. Assume that $(b_{ij})$ is positive definite with smallest eigenvalue $\beta>0$ and  that $u^\infty_i\ge\mu>0$ for all $i=1,\ldots,n$ for some $\mu>0$. If furthermore $u_i\ge\mu>0$ and $f_i(u)\le 0$ in $\Omega_T$ for $i=1,\ldots,n$, then
\begin{align*}
  H_1(u(t)|u^\infty) \le H_1(u^0|u^\infty)e^{-2\beta\mu t}
  \quad\mbox{for }t>0.
\end{align*}
\end{theorem}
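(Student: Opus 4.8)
The plan is to differentiate the relative Boltzmann--Shannon entropy $H_1(u|u^\infty)$ along solutions and show that the dissipation controls $\beta\mu$ times the entropy, then close with Gronwall. First I would compute, using equation \eqref{1.limeq} and the no-flux boundary conditions, that
\begin{align*}
  \frac{d}{dt}H_1(u|u^\infty)
  = \sum_{i=1}^n\int_\Omega \pa_t u_i\,\log\frac{u_i}{u_i^\infty}\,dx
  = -\sigma\sum_{i=1}^n\int_\Omega \frac{|\na u_i|^2}{u_i}\,dx
  + \sum_{i=1}^n\int_\Omega v_i\cdot\na u_i\,dx
  + \sum_{i=1}^n\int_\Omega u_i f_i(u)\log\frac{u_i}{u_i^\infty}\,dx.
\end{align*}
The self-diffusion term is nonpositive and can be discarded. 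For the drift term, I substitute $v_i=-\sum_j a_{ij}\na u_j$, so that $\sum_i\int_\Omega v_i\cdot\na u_i\,dx = -\sum_{i,j}\int_\Omega a_{ij}\na u_i\cdot\na u_j\,dx \le 0$ by positive definiteness of $(a_{ij})$; this term is therefore also dropped. (At the rigorous level this manipulation is justified by the regularity $\na u_i\in L^{4/3}(\Omega_T)$ together with boundedness of $u_i$ from Theorem \ref{thm.bound}, or one argues on the approximate level as in Section \ref{sec.ex}.)

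The heart of the matter is the reaction term $R:=\sum_{i=1}^n\int_\Omega u_i f_i(u)\log(u_i/u_i^\infty)\,dx$. Since $f_i(u^\infty)=0$, we have $f_i(u)=f_i(u)-f_i(u^\infty)=-\sum_{j=1}^n b_{ij}(u_j-u_j^\infty)$, hence
\begin{align*}
  R = -\sum_{i,j=1}^n\int_\Omega b_{ij}(u_j-u_j^\infty)\,u_i\log\frac{u_i}{u_i^\infty}\,dx.
\end{align*}
I would then use the elementary inequality $u_i\log(u_i/u_i^\infty)\ge u_i-u_i^\infty$ for $u_i,u_i^\infty>0$ (convexity of $s\mapsto s\log s$), but this has to be applied with care because of the sign of the factor $(u_j-u_j^\infty)$. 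Here the two extra hypotheses enter: $f_i(u)\le 0$ means $\sum_j b_{ij}(u_j-u_j^\infty)\ge 0$, and $u_i\ge\mu$ gives the pointwise bound $\log(u_i/u_i^\infty)\ge (1-u_i^\infty/u_i)$ refined to $u_i\log(u_i/u_i^\infty)\ge u_i-u_i^\infty + \tfrac{\mu}{2u_i^\infty}\cdots$ — more simply, the bound I expect to need is $u_i\log(u_i/u_i^\infty)\ge u_i - u_i^\infty$ combined with an estimate showing $H_1(u|u^\infty)\le \tfrac{1}{2\mu}\sum_i\int_\Omega(u_i-u_i^\infty)^2dx$ via the Csiszár--Kullback--Pinsker-type inequality with lower bound $\mu$ on the densities. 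Writing $w=u-u^\infty$, the goal is
\begin{align*}
  R \le -\mu\sum_{i,j=1}^n\int_\Omega b_{ij}w_i w_j\,dx \le -\mu\beta\|w\|_{L^2(\Omega)}^2 \le -2\beta\mu\,H_1(u|u^\infty),
\end{align*}
where the first inequality uses $f_i(u)\le0$ and $u_i\ge\mu$ to replace $u_i\log(u_i/u_i^\infty)$ by $\mu$ times something comparable to $w_i$ with the correct sign, the second is positive definiteness of $(b_{ij})$ with eigenvalue $\beta$, and the third is the entropy--$L^2$ comparison using $u_i\ge\mu$ and $u_i^\infty\ge\mu$.

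Putting these together yields $\tfrac{d}{dt}H_1(u(t)|u^\infty)\le -2\beta\mu\,H_1(u(t)|u^\infty)$ for $t>0$, and Gronwall's lemma gives the claimed exponential decay $H_1(u(t)|u^\infty)\le H_1(u^0|u^\infty)e^{-2\beta\mu t}$. I expect the main obstacle to be the careful handling of the reaction term $R$: one must exploit the sign condition $f_i(u)\le0$ and the uniform positivity $u_i\ge\mu$ simultaneously to convert the entropy-type factor $u_i\log(u_i/u_i^\infty)$ into a genuine quadratic form $\mu b_{ij}w_iw_j$ with the right sign, and then match constants so that the dissipation rate is exactly $2\beta\mu$ times the entropy. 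A secondary technical point is justifying the differentiation of $H_1(u|u^\infty)$ and the integrations by parts at the level of the weak solution from Theorem \ref{thm.ex}; this is done either by working on the approximate system and passing to the limit, or by invoking the additional regularity available under the standing hypotheses, and is routine given the estimates already established.
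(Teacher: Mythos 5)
Your overall strategy is the one the paper uses: differentiate $H_1(u|u^\infty)$, discard the nonnegative diffusive and cross-diffusive dissipation terms, and close via the Lotka--Volterra term together with a CKP/Taylor-type entropy--$L^2$ comparison. The mechanisms you identify---the sign condition $f_i(u)\le0$, the convexity bound $u_i\log(u_i/u_i^\infty)\ge u_i-u_i^\infty$, the positive definiteness of $(b_{ij})$, and the bound $H_1(u|u^\infty)\le\tfrac{1}{2\mu}\|u-u^\infty\|_{L^2(\Omega)}^2$---are exactly those of the paper, which merely packages the first two as the decomposition $u_if_i(u)\log(u_i/u_i^\infty)=\big(u_i\log(u_i/u_i^\infty)-(u_i-u_i^\infty)\big)f_i(u)+(u_i-u_i^\infty)f_i(u)$, the first summand being nonpositive and the second generating the quadratic form.

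However, your displayed chain misplaces the factor $\mu$. Writing $w=u-u^\infty$, the correct sequence is
\begin{align*}
R \;=\; -\sum_{i}\int_\Omega\Big(\sum_j b_{ij}w_j\Big)u_i\log\frac{u_i}{u_i^\infty}\,dx
\;\le\; -\sum_{i,j}\int_\Omega b_{ij}w_iw_j\,dx
\;\le\; -\beta\|w\|_{L^2(\Omega)}^2
\;\le\; -2\beta\mu H_1(u|u^\infty),
\end{align*}
where the first inequality uses only $\sum_j b_{ij}w_j=-f_i(u)\ge 0$ and $u_i\log(u_i/u_i^\infty)\ge w_i$, with no $\mu$ at all, and $\mu$ enters solely at the final step through $\|w\|_{L^2}^2\ge 2\mu H_1$ (which you state correctly in prose). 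Your chain $R\le-\mu\sum b_{ij}w_iw_j\le-\mu\beta\|w\|^2\le-2\beta\mu H_1$ is false at both ends: the first step would require $u_i\log(u_i/u_i^\infty)\ge\mu w_i$ pointwise, which fails whenever $u_i<u_i^\infty$ and $\mu<1$ (take $u_i=u_i^\infty(1-\delta)$, where the left side is $\approx-u_i^\infty\delta$ while the right side is $-\mu u_i^\infty\delta$), and the last step would require $\|w\|^2\ge 2H_1$, which again fails for $\mu<1$. The two errors cancel, so you land on the right rate, but the intermediate inequalities as written do not hold. One further small point: you invoked the local limiting system \eqref{1.limeq}, whereas Theorem~\ref{thm.large} concerns the nonlocal system with $v_i=-L_\eps(\na p_i(u))$, so the cross-diffusion dissipation involves $K_\eps(\na u_i)$ rather than $\na u_i$; since that term is nonnegative and discarded, this does not change the argument.
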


The result follows from the inequality
\begin{align*}
  \frac{dH_1}{dt}(u|u^\infty) + \alpha\sum_{i=1}^n\int_\Omega
  |K_\eps(\na u_i)|^2 dx
  \le \sum_{i=1}^n\int_\Omega u_if_i(u)\log\frac{u_i}{u^\infty_i}dx,
\end{align*}
and an estimate of the Lotka--Volterra terms on the right-hand side, using the assumptions of the theorem, leading to $(dH_1/dt)(u|u^\infty)\le -2\beta\mu H_1(u|u^\infty)$. The exponential decay of Theorem \ref{thm.large} is originating from the Lotka--Volterra terms, which explains the conditions $\beta>0$ and $\mu>0$. In particular, the diffusion term $\sigma\Delta u_i$ is not needed. We present in Section \ref{sec.disc} an example where $\mu=0$ is admissible and the exponential decay is a consequence of the diffusion with $\sigma>0$.

\begin{remark}\rm
Let us emphasis the fact that, under similar assumptions, the statement of Theorem~\ref{thm.large} also holds in the local case, i.e.\ when $\eps=0$. This implies that the steady states of the nonlocal and local systems are the same. This is quite different from previous works, see for instance~\cite{HeZu23,JPZ24}, where the steady states observed in the nonlocal and local case are distinct. However, in these systems the nonlocal terms are given by some convolution kernels, while here, the nonlocality originates from the inverse of an elliptic operator.
\qed\end{remark}

\subsection{Discussion}\label{sec.disc}

The positive definiteness of the matrix $(a_{ij})$ in Assumption (A2) can be replaced by the positive stability of $(a_{ij})$ (all eigenvalues are positive) and the detailed-balance condition (there exist $\pi_1,\ldots,\pi_n>0$ such that $\pi_i a_{ij}=\pi_j a_{ji}$ for all $i,j=1,\ldots,n$; see \cite{CDJ18}). Then, defining the new variables $w_i=\pi_i u_i$, equations \eqref{1.eq1} become
\begin{align*}
  \pa_t w_i - \sigma\Delta w_i + \diver(w_i v_i) = w_if_i(u), \quad
  -\eps\Delta v_i+v_i = -\sum_{j=1}^n\frac{a_{ij}}{\pi_j}\na w_j.
\end{align*}
The new matrix $(a_{ij}/\pi_j)$ is symmetric and has only positive eigenvalues. It follows that $(a_{ij}/\pi_j)$ is positive definite. The positivity of $\sigma$ is needed to derive gradient estimates; see \eqref{1.dH1dt}. This assumption is not needed in the local system, since the positive definiteness of $(a_{ij})$ allow for some gradient estimates. Thus, this condition is due to the nonlocal character of the equations (and the properties of $K_\eps$). 

Most of our results can be generalized for general operators $L_\eps$, in particular those relying on estimates from the Boltzmann--Shannon entropy. A simple example is the operator $(-\eps\diver(A\na\cdot+1))^{-1}$ with Dirichlet or Neumann boundary conditions, where $A$ is a constant positive definite matrix. Similarly, the existence, localization, boundedness, and time asym\-ptotics results hold for higher-order operators, like the regularized operator $L_\eps^\eta$ introduced in Section \ref{sec.ex}. However, the bound $K_\eps(\na u_i)\in L^2(\Omega_T)$ from the entropy inequality would provide less regularity for $u_i$ in the higher-order case. Notice that the papers \cite{DDMS24,DeSc20} use the lower-order regularization $\widetilde{L}_\eps=\na(-\eps\Delta +1)^{-1}$ in $\R^d$.

Finally, we discuss the large-time behavior result (Theorem \ref{thm.large}). Results in the literature often concern diffusive Lotka--Volterra systems (without cross-diffusion). For instance, the work \cite[Theorem 3.3]{Bro80} gives conditions under which a critical point with all species coexisting is globally asymptotically stable. Under the condition $\sum_{i=1}^n f_i(u)\ge 0$, the authors of \cite{SuYa15} derived a further entropy identity for a reaction--diffusion system, namely $H_0(u)=\sum_{i=1}^n\int_\Omega(-\log u_i)dx$. Unfortunately, the cross-diffusion terms prevent $H_0(u)$ to be a Lyapunov functional.

If the matrix $(b_{ij})$ is not of full rank, the associated ODE system may admit infinitely many equilibria, which makes the large-time analysis intricate; see, e.g., \cite{AlXi23,Sma76}. The positive definiteness condition of $(b_{ij})$ guarantees the uniqueness of the steady state. 
If $b=0$, the steady state equals $u^\infty=0$ such that the Boltzmann--Shannon entropy cannot be used to show the asymptotic stability of $u^\infty$. 

The assumption $u_i\ge\mu>0$ is not necessary. For instance, we can achieve exponential convergence in the case $b_{ij}=0$ for all $i\neq j\in\{1,\ldots,n\}$ and $b_{i0}$, $b_{ii}>0$ for $i=1,\ldots,n$, assuming $\sigma>0$. The following argument is generalizing the idea in \cite[Sec.~4]{ChJu06}. We have $u_i^\infty=b_{i0}/b_{ii}$ and, differentiating the relative entropy $H_1(u|u^\infty)$ (see Section \ref{sec.large} for details), 
\begin{align*}
  \frac{dH_1}{dt}(u|u^\infty) 
  &+ 4\sigma\sum_{i=1}^n\int_\Omega|\na\sqrt{u_i}|^2 dx
  \le \sum_{i=1}^n\int_\Omega
  u_if_i(u)\log\frac{u_i}{u_i^\infty}dx \\
  &= \sum_{i=1}^n\int_\Omega u_i(b_{i0} - b_{ii}u_i)
  \log\frac{u_i}{u_i^\infty}dx \\
  &= \sum_{i=1}^n\int_\Omega u_i b_{ii}(u_i^\infty - u_i)
  (\log u_i-\log u_i^\infty)dx \le 0.
\end{align*}
By the logarithmic Sobolev inequality, the second term on the left-hand side is estimated from below by $4\sigma C_S H_1(u|u^\infty)$ for some $C_S>0$, and Gronwall's lemma gives 
$$
  H_1(u(t)|u^\infty)\le H_1(u^0|u^\infty)e^{-4\sigma C_S t}, \quad t>0.
$$

\subsection{Outline}

The global existence of weak solutions (Theorem~\ref{thm.ex}) is proved in Section~\ref{sec.ex}, and the boundedness of weak solutions (Theorem~\ref{thm.bound}) is shown in Section \ref{sec.bounded}. In Section~\ref{sec.unique}, we prove the uniqueness of bounded weak solutions (Theorem~\ref{thm.unique}), while the localization limit $\eps \to 0$ (Theorem~\ref{thm.lim}) and the long-time behavior of weak solutions (Theorem~\ref{thm.large}) are proved in Section~\ref{sec.asympt}. Finally, we show two auxiliary lemmata in Appendix~\ref{sec.app}.


\section{Global existence of solutions}\label{sec.ex}

\subsection{Preparations}

We recall the definition of the solution operator $L_\eps:H^1(\Omega)'\to H^1(\Omega)'$, $L_\eps(g)=v$, where $v\in H^1(\Omega)$ is the unique solution to
\begin{align*}
  -\eps\Delta v + v = g\quad\mbox{in }\Omega, \quad 
  v = 0\quad\mbox{in }\pa\Omega.
\end{align*}
Then $\|v\|_{H^1(\Omega)}\le C(\eps)\|g\|_{H^1(\Omega)'}$ for some constant $C(\eps)>0$ and an integration by parts yields
$$
  \langle g,L_\eps(g)\rangle = \int_\Omega(\eps|\na v|^2+v^2)dx
  \quad\mbox{for }g\in H^1(\Omega)',\ v=L_\eps(g),
$$
where $\langle\cdot,\cdot\rangle$ is the dual product in $H^1(\Omega)'\times H^1(\Omega)$. The operator $L_\eps$ is symmetric, positive, and bounded linear. By spectral theory for bounded self-adjoint operators, there exists a unique square root operator $K_\eps$ with the same properties. These statements also hold for vector-valued functions $g\in H^1(\Omega;\R^m)'$ with $m>1$. 

\begin{lemma}\label{lem.Leps}
It holds for all $g\in L^2(\Omega)$ that 
\begin{align}
  \na L_\eps(g) &= L_\eps(\na g), \label{2.commute} \\
  \eps\|L_\eps(\na g)\|_{L^2(\Omega)}^2
  + \|L_\eps(g)\|_{L^2(\Omega)}^2 &= \|K_\eps(g)\|_{L^2(\Omega)}^2.
  \label{2.estKL}
\end{align}
In particular, $\|L_\eps(g)\|_{L^2(\Omega)} \le \|K_\eps(g)\|_{L^2(\Omega)}$ for $g\in H^1(\Omega)'$.
\end{lemma}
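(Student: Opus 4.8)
The plan is to prove the two displayed identities in Lemma~\ref{lem.Leps} separately and then read off the stated inequalities. Throughout I work first with smooth $g$ (say $g\in C^\infty(\overline\Omega)$, or at least $g\in H^1(\Omega)\cap H^2(\Omega)$ so that elliptic regularity gives $v=L_\eps(g)\in H^2(\Omega)$) and pass to general $g\in L^2(\Omega)$ at the end by density, using the boundedness of $L_\eps$ and $K_\eps$ on $L^2(\Omega)$.

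First I would establish the commutation relation \eqref{2.commute}. Let $v=L_\eps(g)$, so that $-\eps\Delta v+v=g$ with $v\in H^1_0(\Omega)$. Applying the gradient to this equation (which makes sense for $g\in H^1$, $v\in H^2$) gives $-\eps\Delta(\na v)+\na v=\na g$ in $\Omega$. To conclude $\na v=L_\eps(\na g)$ I must check the boundary condition, i.e. that $\na v$ is the $H^1_0$-solution of the same elliptic problem with datum $\na g$; this is where a little care is needed, since $v=0$ on $\pa\Omega$ does not literally imply $\na v=0$ on $\pa\Omega$. The clean way is to interpret \eqref{2.commute} componentwise in the weak/distributional sense on $\Omega$: for a test function $\varphi\in C_c^\infty(\Omega)$, $\langle \na L_\eps(g),\varphi\rangle = -\langle L_\eps(g),\diver\varphi\rangle$, and using the weak formulation of $-\eps\Delta v+v=g$ together with $-\eps\Delta(\diver\varphi\text{-related test fields})$ one matches this with $\langle L_\eps(\na g),\varphi\rangle$. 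In other words \eqref{2.commute} is really the statement that the constant-coefficient operator $(-\eps\Delta+1)^{-1}$ commutes with $\pa_{x_k}$ on interior test functions; on $H^1(\Omega)'$ this extends by the density of $C_c^\infty(\Omega)$ and continuity of both sides. I expect this to be the main obstacle, precisely because of the boundary-condition bookkeeping: one must be honest about in which space the identity $\na L_\eps(g)=L_\eps(\na g)$ is being asserted, and the safest reading (consistent with how it is used later, e.g. for $v_i=-L_\eps(\na p_i(u))$) is as an identity in $H^1(\Omega)'$ or $\mathcal D'(\Omega)$ rather than an assertion about traces.

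Next I would prove the energy identity \eqref{2.estKL}. The basic tool is the self-adjointness of $L_\eps$ and the integration-by-parts formula recalled just above the lemma, namely $\langle h,L_\eps(h)\rangle=\int_\Omega(\eps|\na L_\eps(h)|^2+L_\eps(h)^2)\,dx$, together with $\|K_\eps(h)\|_{L^2(\Omega)}^2=\langle h,L_\eps(h)\rangle$ (which holds because $K_\eps\circ K_\eps=L_\eps$ and $K_\eps$ is self-adjoint: $\|K_\eps h\|_{L^2}^2=\langle K_\eps h,K_\eps h\rangle=\langle h,K_\eps^2 h\rangle=\langle h,L_\eps h\rangle$). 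Applying this with $h=g$ gives $\|K_\eps(g)\|_{L^2}^2=\int_\Omega(\eps|\na L_\eps(g)|^2+L_\eps(g)^2)\,dx$. Now I substitute \eqref{2.commute}: $\na L_\eps(g)=L_\eps(\na g)$, so $\eps|\na L_\eps(g)|^2=\eps|L_\eps(\na g)|^2$, and integrating yields
\begin{align*}
  \|K_\eps(g)\|_{L^2(\Omega)}^2
  = \eps\|L_\eps(\na g)\|_{L^2(\Omega)}^2 + \|L_\eps(g)\|_{L^2(\Omega)}^2,
\end{align*}
which is exactly \eqref{2.estKL}. (For vector-valued $g\in H^1(\Omega;\R^m)'$ the same computation applies componentwise.)

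Finally, the stated consequence $\|L_\eps(g)\|_{L^2(\Omega)}\le\|K_\eps(g)\|_{L^2(\Omega)}$ is immediate: for $g\in H^1(\Omega)'$ one has $\langle g,L_\eps(g)\rangle=\|K_\eps(g)\|_{L^2(\Omega)}^2$ and also $\langle g,L_\eps(g)\rangle=\int_\Omega(\eps|\na L_\eps(g)|^2+L_\eps(g)^2)\,dx\ge\|L_\eps(g)\|_{L^2(\Omega)}^2$ since $\eps>0$; combining the two gives the inequality, and no appeal to \eqref{2.commute} is needed for this part. To finish, I would remark that both \eqref{2.commute} and \eqref{2.estKL} were derived for smooth/$H^1$ data and that density of such $g$ in $L^2(\Omega)$ together with the continuity of $L_\eps$, $\na L_\eps=L_\eps\na$, and $K_\eps$ as maps into the relevant spaces extends them to all $g\in L^2(\Omega)$, completing the proof.
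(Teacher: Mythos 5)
Your argument follows the paper's route — first assert the commutation \eqref{2.commute}, then read off \eqref{2.estKL} from the weak formulation and self-adjointness — and your observation that the final ``in particular'' inequality needs only $\langle g,L_\eps(g)\rangle=\|K_\eps(g)\|_{L^2(\Omega)}^2\ge\|L_\eps(g)\|_{L^2(\Omega)}^2$ (and not \eqref{2.commute}) is correct and cleaner than the paper's presentation. However, your instinct that the boundary condition is the crux of \eqref{2.commute} is right, and the rescue you sketch does not work: the commutation relation is genuinely false for the Dirichlet solution operator, not merely delicate. Take $d=1$, $\Omega=(-1,1)$, $g\equiv 1$. Then $L_\eps(g)(x)=1-\cosh(x/\sqrt\eps)/\cosh(1/\sqrt\eps)$, so $\pa_x L_\eps(g)(x)=-\sinh(x/\sqrt\eps)/(\sqrt\eps\cosh(1/\sqrt\eps))\not\equiv 0$, whereas $\na g=0$ gives $L_\eps(\na g)=0$. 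The two sides of \eqref{2.commute} are different $L^2(\Omega)$ functions, hence also different distributions, so a weak or $\mathcal D'(\Omega)$ reading does not save the identity. The reduction you propose — pair with $\varphi\in C_c^\infty(\Omega)$, use self-adjointness, and match $-\langle g,L_\eps(\diver\varphi)\rangle$ with $-\langle g,\diver L_\eps(\varphi)\rangle$ — requires $L_\eps(\diver\varphi)=\diver L_\eps(\varphi)$, which is the same commutation and hence circular. (Abstractly, the gradient of a Dirichlet eigenfunction of $-\Delta$ satisfies Neumann, not Dirichlet, conditions, so functional calculus built on the Dirichlet Laplacian does not commute with $\na$; on a bounded domain $L_\eps$ is not translation-invariant.) What the weak formulation actually proves is $\eps\|\na L_\eps(g)\|_{L^2(\Omega)}^2+\|L_\eps(g)\|_{L^2(\Omega)}^2=\|K_\eps(g)\|_{L^2(\Omega)}^2$, with $\na L_\eps(g)$ and not $L_\eps(\na g)$; the paper's subsequent uses of \eqref{2.estKL} plug $\na u_i$ directly into the $g$-slot and invoke only $\|L_\eps(h)\|_{L^2}\le\|K_\eps(h)\|_{L^2}$, so this corrected version suffices there. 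Both your proof and the paper's (which simply states that a ``linear solution operator commutes with the gradient'') assert \eqref{2.commute} without a valid argument, and this should be regarded as a genuine gap in the lemma as stated.
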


\begin{proof}
Since $L_\eps$ is a linear solution operator, it commutes with the gradient, which shows \eqref{2.commute}. Next, setting $v=L_\eps(g)$, we estimate
\begin{align*}
  \eps\|L_\eps(\na g)\|_{L^2(\Omega)}^2 + \|L_\eps(g)\|_{L^2(\Omega)}^2
  &= \eps\|\na L_\eps(g)\|_{L^2(\Omega)}^2 
  + \|L_\eps(g)\|_{L^2(\Omega)}^2
  = \int_\Omega\big(\eps|\na v|^2 + |v|^2\big)dx \\
  &= \langle g,L_\eps(g)\rangle
  = \langle K_\eps(g),K_\eps(g)\rangle = \|K_\eps(g)\|_{L^2(\Omega)}^2.
\end{align*}
This proves \eqref{2.estKL}. The final statement is a consequence of this inequality and a density argument.
\end{proof}

We proceed to the proof of Theorem \ref{thm.ex}, which is split into several steps.

\subsection{Definition of the approximate problem}

Let $\eta>0$ and $m\in\N$ with $m>d/2$. We need the higher-order regularization $L_\eps^\eta:H^1(\Omega;\R^n)'\to H^{m}(\Omega;\R^n)$, defined by $L_\eps^\eta(g)=v$, where $v\in H^m(\Omega;\R^n)\cap H_0^1(\Omega;\R^n)$ is the unique solution to
\begin{align*}
  \eta\int_\Omega\sum_{|\alpha|=m}D^\alpha v\cdot D^\alpha\phi dx
  + \int_\Omega\big(\eps\na v:\na\phi dx + v\cdot\phi\big) dx
  = \langle g,\phi\rangle
\end{align*}
for all $\phi\in H^{m}(\Omega;\R^n)\cap H_0^1(\Omega;\R^n)$, where $\alpha\in\N_0^n$ is a multiindex, $D^\alpha$ is a partial derivative of order $|\alpha|=m$, $\langle\cdot,\cdot\rangle$ is the dual product in $H^1(\Omega;\R^n)'\times H^1(\Omega;\R^n)$, and ``:'' denotes the Frobenius matrix product. The choice of $m$ implies that $H^{m}(\Omega)\hookrightarrow L^\infty(\Omega)$. As the regularized operator $L_\eps^\eta$ is still symmetric, positive, and linear bounded, there exists a unique square root operator $K_\eps^\eta$ on $H^1(\Omega;\R^n)'$. The following inequality holds:

\begin{lemma}\label{lem.Lepseta}
It holds for all $g\in L^2(\Omega;\R^n)$ that
\begin{align*}
  \eps\|L_\eps^\eta(\na g)\|_{L^2(\Omega)}^2
  + \|L_\eps^\eta(g)\|_{L^2(\Omega)}^2
  \le \|K_\eps^\eta(g)\|_{L^2(\Omega)}^2.
\end{align*}
\end{lemma}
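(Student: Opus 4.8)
The plan is to mimic the proof of Lemma~\ref{lem.Leps}, replacing the explicit integration-by-parts identity for $L_\eps$ by the (in)equality associated with the extra higher-order term in the definition of $L_\eps^\eta$. First I would set $v=L_\eps^\eta(g)$ and test the weak formulation with $\phi=v$, obtaining
\begin{align*}
  \langle g,v\rangle
  = \eta\int_\Omega\sum_{|\alpha|=m}|D^\alpha v|^2 dx
  + \int_\Omega\big(\eps|\na v|^2 + |v|^2\big)dx
  \ge \int_\Omega\big(\eps|\na v|^2 + |v|^2\big)dx,
\end{align*}
where the inequality uses $\eta>0$ and the nonnegativity of the dropped term; this is exactly where the ``$\le$'' in the statement (as opposed to the ``$=$'' in Lemma~\ref{lem.Leps}) comes from. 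Combined with $\langle g,v\rangle=\langle g,L_\eps^\eta(g)\rangle=\langle K_\eps^\eta(g),K_\eps^\eta(g)\rangle=\|K_\eps^\eta(g)\|_{L^2(\Omega)}^2$ (using that $K_\eps^\eta$ is the square root of $L_\eps^\eta$ and is self-adjoint), this yields
\begin{align*}
  \eps\|\na L_\eps^\eta(g)\|_{L^2(\Omega)}^2 + \|L_\eps^\eta(g)\|_{L^2(\Omega)}^2
  \le \|K_\eps^\eta(g)\|_{L^2(\Omega)}^2.
\end{align*}

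Next I would dispose of the commutation step. For $g\in L^2(\Omega;\R^n)$ one has $\na L_\eps^\eta(g) = L_\eps^\eta(\na g)$ because $L_\eps^\eta$ is a linear constant-coefficient solution operator on a fixed domain, so it commutes with differentiation in the sense of distributions (the same argument as for \eqref{2.commute}; one should note that $\na g$ is then interpreted as an element of $H^1(\Omega;\R^{nd})'$, and the operator acts componentwise). Substituting $\na L_\eps^\eta(g)$ by $L_\eps^\eta(\na g)$ in the previous display gives precisely the claimed inequality. A density argument extends it from $g\in L^2$ to $g\in H^1(\Omega)'$ if desired, exactly as in Lemma~\ref{lem.Leps}, though the statement as written only asks for $g\in L^2(\Omega;\R^n)$.

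I do not expect any genuine obstacle here: the proof is a routine adaptation of Lemma~\ref{lem.Leps}. The only point requiring a little care is the legitimacy of testing with $\phi=v$ itself — this is fine since $v\in H^m(\Omega;\R^n)\cap H_0^1(\Omega;\R^n)$ is an admissible test function — and the bookkeeping of which dual product is being used when one writes $\langle g,L_\eps^\eta(g)\rangle = \|K_\eps^\eta(g)\|_{L^2}^2$, which relies on the spectral-theoretic construction of $K_\eps^\eta$ as a bounded self-adjoint square root on the Hilbert space $H^1(\Omega;\R^n)'$ (equipped with a suitable inner product), identical to the construction already invoked for $K_\eps$. One should also remark explicitly that, unlike in Lemma~\ref{lem.Leps}, equality is lost precisely because of the nonnegative term $\eta\int_\Omega\sum_{|\alpha|=m}|D^\alpha v|^2\,dx$, which is why only an inequality is claimed.
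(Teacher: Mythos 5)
Your proof is correct and follows essentially the same route as the paper: set $v=L_\eps^\eta(g)$, test the weak formulation of $L_\eps^\eta$ with $\phi=v$, drop the nonnegative $\eta$-term, identify $\langle g,L_\eps^\eta(g)\rangle$ with $\|K_\eps^\eta(g)\|_{L^2(\Omega)}^2$ via the square-root operator, and use the commutation $\na L_\eps^\eta(g)=L_\eps^\eta(\na g)$ (the paper invokes this commutation at the start rather than the end, but that is a trivial reordering). Your additional remarks on why only an inequality survives and on the admissibility of the test function are accurate.
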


\begin{proof}
We estimate similarly as in the proof of Lemma \ref{lem.Leps}. Let $v=L_\eps^\eta(g)\in H^1_0(\Omega;\R^n)$. Then
\begin{align*}
  \eps\|&L_\eps^\eta(\na g)\|_{L^2(\Omega)}^2 +\|L_\eps^\eta(g)\|_{L^2(\Omega)}^2
  = \eps\|\na L_\eps^\eta(g)\|_{L^2(\Omega)}^2 
  + \|L_\eps^\eta(g)\|_{L^2(\Omega)}^2 \\
  &= \eps\|\na v\|_{L^2(\Omega)}^2 + \|v\|_{L^2(\Omega)}^2
  \le \eta\int_\Omega\sum_{|\alpha|=m}|D^\alpha v|^2 dx
  + \int_\Omega(\eps|\na v|^2 + |v|^2)dx = \langle g,v\rangle \\
  &= \langle g,L_\eps^\eta(g)\rangle = \|K_\eps^\eta(g)\|_{L^2(\Omega)}^2,
\end{align*}
finishing the proof.
\end{proof}

Let $\rho\in[0,1]$, $N\ge e^2$, and set $(z)_+^N:=\max\{0,\min\{N,z\}\}$ for $z\in\R$. We assume that the initial data satisfies $u_i^0\in L^\infty(\Omega)$, for instance by using an $L^\infty(\Omega)$ regularization $u_i^{0,\eta}$ of the initial data. We wish to solve the approximate nonlinear problem
\begin{align}\label{2.approx}
  \int_0^T&\langle\pa_t u_i,\phi_i\rangle dt
  + \sigma\int_0^T\int_\Omega\na u_i\cdot\na\phi_i dxdt \\
  &= \rho\int_0^T\int_\Omega\big((u_i)_+^N \, v_i\cdot\na\phi_i 
  + (u_i)_+^N f_i(u)\phi_i\big)dxdt, \quad i=1,\ldots,n, \nonumber
\end{align}
for $\phi_i\in H^1(\Omega)$ and $u_i(0)=u_i^0$ in $\Omega$, where $v=(v_1,\ldots,v_n)$ and $v_i:=-L_\eps^\eta(\na p_i(u))$. If $u\in L^2(\Omega_T;\R^n)$, we have $v\in L^2(0,T;$ $H^m(\Omega;\R^n))\subset L^2(0,T;L^\infty(\Omega;\R^n))$. 

\subsection{Linearized system} 

Given $\bar{u}_i\in L^2(\Omega_T)$, we consider first the linearized system
\begin{align}\label{2.lin}
  \int_0^T&\langle\pa_t u_i,\phi_i\rangle dt
  + \sigma\int_0^T\int_\Omega\na u_i\cdot\na\phi_i dxdt \\
  &= \rho\int_0^T\int_\Omega
  \big((\bar{u}_i)_+^N   \, \bar{v}_i \cdot\na\phi_i 
  + (\bar{u}_i)_+^N f_i(\bar{u})\phi_i\big)dxdt, \quad i=1,\ldots,n,
  \nonumber
\end{align}
for $\phi_i\in H^1(\Omega)$ and $u_i(0)=\rho u_i^0$ in $\Omega$, where $\bar{v}_i:=-L_\eps^\eta(\na p_i(\bar{u}))\in L^2(0,T;H^m(\Omega;\R^n))$. The right-hand side defines a linear form which is an element of $L^2(0,T;H^1(\Omega)')$. By \cite[Theorem 23.A]{Zei90}, there exists a unique solution $u_i\in L^2(0,T;H^1(\Omega))$ such that $\pa_t u_i\in L^2(0,T;H^1(\Omega)')$. 

\subsection{Leray--Schauder fixed-point argument} 

We define the fixed-point operator $Q:L^2(\Omega_T)\times[0,1]\to L^2(\Omega_T)$ by $Q(\bar{u},\rho)=u$ as the unique solution to \eqref{2.lin} for given $(\bar{u},\rho)$. It holds that $Q(\bar{u},0)=0$. The continuity of $Q$ follows from standard arguments and its compactness is a consequence of the Aubin--Lions lemma, since $L^2(0,T;H^1(\Omega))\cap H^1(0,T;H^1(\Omega)')$ embeddes compactly into $L^2(\Omega_T)$. It remains to establish uniform a priori bounds for all fixed points of $Q$.

Let $(u,\rho)$ be such a fixed point. We first notice, using $\min\{0,u_i\}$ as a test function in the weak formulation of \eqref{2.approx}, that $u_i\ge 0$ in $\Omega_T$ for $i=1,\ldots,n$. Besides, the constant test function $\phi_i=1$ in \eqref{2.approx} yields
\begin{align}\label{2.LinftyL1}
  \frac{d}{dt}\int_\Omega u_i dx &= \rho\int_\Omega(u_i)_+^N f_i(u)dx
  \le b_{i0}\int_\Omega(u_i)_+^N dx \le C\int_\Omega u_i dx,
\end{align}
which gives a uniform bound for $u_i$ in $L^\infty(0,T;L^1(\Omega))$. Now, in order to derive more uniform bounds, we intend to use $\log u_i$ as a test function. Since this function is not admissible, we need to regularize. For this, we introduce the auxiliary functions
\begin{align*}
  S_N^0(z) &:= \int_1^z \frac{1}{(s)_+^N}ds 
  = \begin{cases}
  \log z &\mbox{if } 0\leq z\leq N,\\
  \log N + \frac{z-N}{N} &\mbox{if } z\geq N,
  \end{cases} \\
  S_N^{1/2}(z) &:= \int_0^z \frac{1}{\sqrt{(s)_+^N}}ds 
  = \begin{cases}
  2\sqrt{z} &\mbox{if } 0\leq z\leq N, \\
  2\sqrt{N} + \frac{z-N}{\sqrt{N}} &\mbox{if } z\geq N.
  \end{cases}
\end{align*}
These functions satisfy the chain rules
$$
  \na S_N^0(f) = \frac{\na f}{(f)_+^N}, \quad
  \na S_N^{1/2}(f) = \frac{\na f}{\sqrt{(f)_+^N}}
$$
for differentiable functions $f$. Furthermore, we introduce
\begin{align*}
  R_N^1(z):=\int_e^z S_N^0(s)ds 
  = \begin{cases}
  z(\log z-1) &\mbox{if } 0\leq z\leq N,\\
  N(\log N -1) + (z-N)\log N + \frac{(z-N)^2}{2N} &\mbox{if } z\geq N,
  \end{cases}
\end{align*}
which satisfies the chain rule $\pa_t R_N^1(f)=S_N^0(f)\pa_t f$ (again for differentiable functions $f$).

Let $\delta>0$. Since $u_i\ge 0$, the test function $S_N^0(u_i+\delta)\in L^2(0,T;H^1(\Omega))$ is admissible in \eqref{2.approx}, yielding
\begin{align*}
  \frac{d}{dt}&\int_\Omega R_N^1(u_i+\delta)dx
  + \sigma\int_\Omega\frac{|\na u_i|^2}{(u_i+\delta)_+^N}dx \\
  &= \rho\int_\Omega\big(v_i\cdot\na u_i 
  + (u_i)_+^N f_i(u)S_N^0(u_i+\delta)\big)dx \\
  &= -\rho\sum_{i=1}^n\int_\Omega a_{ij}K^\eta_\eps(\na u_i)
  \cdot K^\eta_\eps(\na u_j)dx + \rho\int_\Omega 
  (u_i)_+^N f_i(u)S_N^0(u_i+\delta)dx,
\end{align*} 
where the last step follows from $v_i=L_\eps^\eta(\na p_i(u))=\sum_{j=1}^n a_{ij}(K_\eps^\eta)^2(\na u_j)$. By dominated convergence, we can pass to the limit $\delta\to 0$ in the last integral on the right-hand side and in the first integral on the left-hand side (in the time-integrated version). By monotone convergence, we can pass to the limit $\delta\to 0$ in the second term on the left-hand side. Thus, together with the positive definiteness of $(a_{ij})$ (with smallest eigenvalue $\alpha>0$) and definition \eqref{1.LV} of $f_i(u)$, we find, after integration over time, that
\begin{align}\label{2.aux}
  &\int_\Omega R_N^1(u_i(t))dx
  + \sigma\int_0^t \|\na S_N^{1/2}(u_i)\|_{L^2(\Omega)}^2 \, ds
  + \alpha\rho\int_0^t \|K_\eps^\eta(\na u_i)\|_{L^2(\Omega)}^2 \, ds \\
  &\le \int_\Omega R_N^1(u^0_i) \, dx + b_{i0}\int_0^t\int_\Omega(u_i)_+^N S_N^0(u_i)dxds
  - \sum_{j=1}^n b_{ij}\int_0^t \int_\Omega (u_i)_+^N u_j S_N^0(u_i)dxds \nonumber\\
  &\le \int_\Omega R_N^1(u^0_i) \, dx + b_{i0}\int_0^t \int_\Omega(u_i)_+^N S_N^0(u_i)dxds. \nonumber
\end{align}
Here, we use the nonnegativity conditions $b_{i0}$, $b_{ij}\ge 0$ from Assumption (A2). Straightforward computations show that for any $z\in\R$ and $N\ge e$,
\begin{align*}
	(z)_+^N S_N^0(z) \leq R_N^1(z) + (z)_+^N
\end{align*}
This yields the following estimate on the second term in the right-hand side of \eqref{2.aux}:
\begin{align*}
b_{i0}\int_0^t \int_\Omega(u_i)_+^N S_N^0(u_i)dx \leq b_{i0}\int_0^t \int_\Omega R_1^N(u_i) \,dxds + b_{i0} \, T \, \|u_i\|_{L^\infty(0,T;L^1(\Omega))},
\end{align*}
which allows us to estimate the right-hand side of \eqref{2.aux}, and it follows from Gronwall's inequality, estimate~\eqref{2.LinftyL1} and the fact that $R_N^1(u_i^0)$ can be controlled by the $L^2(\Omega)$ norm of $u_i^0$ that
\begin{align*}
  \|R_N^1(u_i)\|_{L^\infty(0,T;L^1(\Omega))}
  + \sigma\|\na S_N^{1/2}(u_i)\|_{L^2(\Omega_T)} \le C(T).
\end{align*} 
Together with the uniform bound for $\na u_i = [(u_i)_+^N]^{1/2}\na S_N^{1/2}(u_i)$ in $L^2(\Omega_T)$ (for fixed $N$), we infer that
\begin{align}\label{2.reg}
  \|u_i\|_{L^\infty(0,T;L^1(\Omega))}
  + \|u_i\|_{L^2(0,T;H^1(\Omega))} \le C(N). 
\end{align}
These bounds are sufficient to apply the Leray--Schauder fixed-point theorem, which yields the existence of a solution $u=(u_1,\ldots,u_n)$ to \eqref{2.approx} with initial condition $u(0)=u^0$ in $\Omega$ satisfying \eqref{2.reg} and $\|u_i\|_{H^1(0,T;H^1(\Omega)')}\le C(N)$. 

\subsection{Limit $N\to\infty$} 

For fixed $\eta>0$, the operator $L_\eps^\eta$ maps $H^1(\Omega)'$ to $H^m(\Omega)\hookrightarrow L^\infty(\Omega)$. Then we can prove $L^\infty(\Omega)$ bounds uniform in $N$ for $u_i$.

\begin{lemma}[$L^\infty(\Omega)$ bounds]\label{lem.Linfty}
Let $\eta>0$, $N\ge e^2$, and $u^0\in L^\infty(\Omega;\R^n)$. Then
$$
  \|u_i\|_{L^\infty(\Omega_T)} \le C(\eta),
$$
where $C(\eta)>0$ depends on $\eta$ but not $N$.
\end{lemma}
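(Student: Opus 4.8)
The plan is to run an Alikakos-type $L^p$ iteration for the truncated equation~\eqref{2.approx}, keeping $\eta>0$ fixed and tracking that no constant depends on $N$. First I would fix $i$ and, for an integer $p\ge 2$, use $\phi_i = u_i^{p-1}$ as a test function in~\eqref{2.approx} (this is admissible since the $N\to\infty$ limit produced $u_i\in L^2(0,T;H^1(\Omega))$ with $u_i\ge 0$, and one may argue by a further regularization $u_i\mapsto (u_i+\delta)^{p-1}\wedge M$ if needed). The self-diffusion term yields $\sigma(p-1)\int_\Omega u_i^{p-2}|\na u_i|^2\,dx = \tfrac{4\sigma(p-1)}{p^2}\int_\Omega|\na u_i^{p/2}|^2\,dx$, while the time term gives $\tfrac1p\,\tfrac{d}{dt}\|u_i\|_{L^p(\Omega)}^p$. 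The key point is that the convective and reaction terms on the right-hand side are controlled \emph{linearly} by $\|v_i\|_{L^\infty}$ and by $b_{i0}$: since $0\le(u_i)_+^N\le u_i$ and $|f_i(u)|\le b_{i0}+\sum_j b_{ij}u_j$, after dropping the good negative contribution $-\sum_j b_{ij}\int(u_i)_+^N u_j u_i^{p-1}\le 0$ from $f_i$, the convective term is $\rho(p-1)\int_\Omega (u_i)_+^N\,v_i\cdot\na u_i\,u_i^{p-2}\,dx$, which I would bound after an integration by parts (moving the derivative off $u_i$) or by Young's inequality using $|v_i|\le\|v_i\|_{L^\infty(\Omega)}$, absorbing a fraction of $\int|\na u_i^{p/2}|^2$ into the left-hand side and leaving a term of size $C\,p\,\|v_i\|_{L^\infty}^2\|u_i\|_{L^p}^p$. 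The crucial input here is that for \emph{fixed} $\eta$, $L_\eps^\eta : H^1(\Omega)'\to H^m(\Omega)\hookrightarrow L^\infty(\Omega)$ is bounded, so $\|v_i\|_{L^2(0,T;L^\infty(\Omega))} = \|L_\eps^\eta(\na p_i(u))\|_{L^2(0,T;L^\infty)}\le C(\eta)\|\na p_i(u)\|_{L^2(0,T;H^1(\Omega)')}\le C(\eta)\|u\|_{L^2(\Omega_T)}$, and the latter is bounded by $C(N)$ — but in fact by the $L^\infty(0,T;L^1)$ plus entropy bounds it is even bounded uniformly in $N$; I would only need $\|v_i\|_{L^2(0,T;L^\infty(\Omega))}\le C(\eta)$ independent of $N$, which follows from~\eqref{2.reg} together with~\eqref{2.LinftyL1}, or more cleanly from the $N$-uniform a priori bounds already available.

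Next I would convert this differential inequality into the iterative inequality announced in the introduction. Writing $w=u_i^{p/2}$ and using the Gagliardo--Nirenberg inequality $\|w\|_{L^2}^2 \le C\|w\|_{H^1}^{2\theta}\|w\|_{L^1}^{2(1-\theta)}$ with the exponent $\theta$ dictated by $d$, together with Young's inequality, I bound $\|w\|_{L^2(\Omega)}^2$ by $\varepsilon\|\na w\|_{L^2}^2 + C(\varepsilon)\|w\|_{L^1}^2$; feeding this back, integrating in time, and using~\eqref{2.LinftyL1} for the base of the recursion, I arrive at an estimate of the form
\begin{align*}
  a_{p} \le C(\eta,u^0)\,p^{d+2}\,a_{p/2}^2, \qquad
  a_p := \|u_i\|_{L^\infty(0,T;L^p(\Omega))}^{p},
\end{align*}
possibly after adjusting the precise power of $p$ (the exact exponent is immaterial, only polynomial growth matters). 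Taking $p = 2^k$ and iterating, the standard Alikakos bookkeeping — taking logarithms, summing the geometric-type series $\sum_k 2^{-k}\log(C p_k^{d+2})<\infty$ — gives $a_{2^k}^{1/2^k}\le C(\eta,u^0,T)$ uniformly in $k$, hence $\|u_i\|_{L^\infty(0,T;L^{p}(\Omega))}\le C(\eta)$ with $C(\eta)$ independent of $p$. Letting $p\to\infty$ yields $\|u_i\|_{L^\infty(\Omega_T)}\le C(\eta)$, as claimed. For the base case I would also incorporate $\|u^0\|_{L^\infty(\Omega)}$ so that $a_1$ and indeed $\|u_i(0)\|_{L^p}$ are controlled uniformly.

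The main obstacle is ensuring genuine $N$-independence of every constant, and this is precisely where the fixed $\eta>0$ is essential. The convective term involves $(u_i)_+^N v_i$, and a naive bound $|(u_i)_+^N v_i|\le N|v_i|$ would be useless; the point is that $(u_i)_+^N\le u_i$ pointwise, so the truncation only \emph{helps}, and all estimates go through with $u_i$ in place of $(u_i)_+^N$. The other delicate point is that $\|v_i\|_{L^2(0,T;L^\infty)}$ must be bounded independently of $N$: this is guaranteed because the a priori bounds~\eqref{2.LinftyL1} (mass bound in $L^\infty(0,T;L^1)$) and the $N\to\infty$ step's entropy-type control give $\|u\|_{L^2(\Omega_T)}\le C$ uniformly in $N$, and then the fixed-$\eta$ mapping property $L_\eps^\eta:H^1(\Omega)'\to L^\infty(\Omega)$ transfers this to $v_i$. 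A minor technical nuisance is the admissibility of the power test functions given only $H^1$ regularity in space; this is handled by the usual truncate-and-regularize argument (replace $u_i^{p-1}$ by $\min\{u_i,M\}^{p-1}$, pass $M\to\infty$ by monotone convergence on the good terms and dominated convergence on the rest), which I would state but not belabor.
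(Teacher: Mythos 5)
Your overall strategy is the same as the paper's: test with a (cutoff) power of $u_i$, estimate the drift term via $\|v_i\|_{L^\infty}$, apply Gagliardo--Nirenberg to close an Alikakos iteration, and let the exponent go to infinity. However, there is a genuine gap at the crucial step where you argue that $\|v_i\|_{L^\infty}$ is bounded independently of $N$. You write $\|v_i\|_{L^2(0,T;L^\infty(\Omega))}\le C(\eta)\|\na p_i(u)\|_{L^2(0,T;H^1(\Omega)')}\le C(\eta)\|u\|_{L^2(\Omega_T)}$ and then assert that $\|u\|_{L^2(\Omega_T)}$ is bounded uniformly in $N$ by ``\eqref{2.reg} together with \eqref{2.LinftyL1}, or more cleanly from the $N$-uniform a priori bounds already available.'' But \eqref{2.reg} gives only $\|u_i\|_{L^2(0,T;H^1(\Omega))}\le C(N)$ --- a constant depending on $N$ --- and the $N$-uniform bounds that are actually available at this stage are the $L^\infty(0,T;L^1(\Omega))$ mass bound from \eqref{2.LinftyL1} and the entropy bounds on $R_N^1(u_i)\sim u_i\log u_i$ and $\na S_N^{1/2}(u_i)\sim\na\sqrt{u_i}$. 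For $d\ge 2$ these do \emph{not} give $u\in L^2(\Omega_T)$ uniformly in $N$; the $L^\infty(0,T;L^2)$ bound is in fact obtained \emph{within} the Alikakos iteration (the $\gamma=1$ case, see \eqref{3.L2}), so your argument is circular.

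The correct entry point is the one the paper uses: the $L^\infty(0,T;L^1(\Omega))$ bound on $u_i$ from \eqref{2.LinftyL1} is uniform in $N$, hence $\na p_i(u)\in L^\infty(0,T;W^{-1,1}(\Omega))$ uniformly in $N$; for fixed $\eta>0$ the higher-order operator $L_\eps^\eta$ maps $W^{-1,1}(\Omega)$ boundedly into $L^\infty(\Omega)$ (this is where the regularization by $\eta\Delta^m$ is essential, and it is the precise analogue of the hypothesis in Theorem \ref{thm.bound}), giving $\|v_i\|_{L^\infty(\Omega_T)}\le C(\eta)$ with no $N$-dependence. Once this replacement is made, your iteration closes. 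Two further minor points: first, the paper's test functions $S_N^\gamma(u_i)$ (with cutoff precisely at the same level $N$ as in \eqref{2.approx}) are Lipschitz and hence automatically admissible in $H^1(\Omega)$, and the chain rules align exactly with the $(u_i)_+^N$ factors in the weak form, so no extra $M\to\infty$ passage is needed; your $\min\{u_i,M\}^{p-1}$ regularization works but is an extra layer. Second, the paper obtains $\|v_i\|_{L^\infty(\Omega_T)}$, i.e.\ bounded in time as well, which yields a cleaner autonomous Gronwall inequality than the $L^2(0,T;L^\infty)$ version you target.
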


The lemma is proved in Section \ref{sec.bounded}. We deduce from \eqref{2.aux} with $\rho=1$ that
\begin{align*}
  \frac{d}{dt}\int_\Omega & R_N^1(u_i^N)dx
  + \sigma\|\na S_N^{1/2}(u_i^N)\|_{L^2(\Omega)}^2
  + \alpha\|K_\eps^\eta(\na u_i^N)\|_{L^2(\Omega_T)}^2 \\
  &\le b_{i0}\int_\Omega(u_i^N)_+^N S_N^0(u_i^N)dx
  \le C(\|u_i^N\|_{L^\infty(\Omega_T)}) \le C(\eta),
\end{align*}
where the last step is a consequence of Lemma \ref{lem.Linfty}. This estimate for $\na S_N^{1/2}(u_i^N)$ together with Lemma~\ref{lem.Linfty} provide an $N$-independent bound for $\na u_i^N=[(u_i^N)_+^N]^{1/2}\na S_N^{1/2}(u_i^N)$ in $L^2(\Omega_T)$. Moreover, we obtain a bound for $K_\eps^\eta(\na u_i^N)$ in $L^2(\Omega_T)$ uniformly in $N$. Then \eqref{2.estKL} yields an $L^2(\Omega_T)$ estimate for $L_\eps^\eta(\na u_i^N)$ and consequently for $v_i^N=L_\eps^\eta(\na p_i(u^N))$ in $L^2(\Omega_T)$. It follows that $\pa_t u_i^N$ is uniformly bounded in $L^2(0,T;H^1(\Omega)')$. 

These bounds allow us to perform the limit $N\to\infty$. By the Aubin--Lions compactness lemma, there exists a subsequence of $(u_i^N)$ (not relabeled) such that $u_i^N\to u_i$ strongly in $L^2(\Omega_T)$ as $N\to\infty$. Then the  uniform $L^\infty(\Omega_T)$ bound for $u_i^N$ shows that
\begin{align*}
  u_i^N\to u_i&\quad\mbox{strongly in }L^p(\Omega_T)\mbox{ for all }
  p<\infty, \\
  u_i^N \rightharpoonup^* u_i&\quad\mbox{weakly* in }L^\infty(\Omega_T).
\end{align*}
Moreover, we have
\begin{align*}
  \na u_i^N\rightharpoonup\na u_i &\quad\mbox{weakly in }L^2(\Omega_T), \\
  \pa_t u_i^N\rightharpoonup\pa_t u_i &\quad\mbox{weakly in }
  L^2(0,T;H^1(\Omega)'), \\
  v_i^N\rightharpoonup v_i &\quad\mbox{weakly in }L^2(\Omega_T).  
\end{align*}
The limit $v_i$ can be identified with $-L^\eta_\eps(\na p_i(u))$ since, for $\phi\in C_0^\infty(\Omega_T)$,
\begin{align*}
  \langle v_i^N,\phi\rangle 
  &= -\langle L_\eps^\eta(\na p_i(u^N)),\phi\rangle
  = -\langle\na p_i(u^N),L_\eps^\eta(\phi)\rangle \\
  &\to -\langle\na p_i(u),L_\eps^\eta(\phi)\rangle 
  = -\langle L_\eps^\eta(\na p_i(u)),\phi\rangle.
\end{align*}
The dominated convergence theorem allows us to treat the cutoff functions. We conclude that $u_i$ with $v_i=-L_\eps^\eta(\na p_i(u))$ solves
\begin{align}\label{2.approx2}
  \int_0^T\langle\pa_t u_i,\phi\rangle dt
  + \sigma\int_0^T\int_\Omega\na u_i\cdot\na\phi_i dxdt
  = \int_0^T\int_\Omega\big(u_iv_i\cdot\na\phi_i + u_if_i(u)\phi_i
  \big)dxdt
\end{align}
for $\phi\in L^2(0,T;H^1(\Omega))$ with initial data $u_i(0)=u_i^0$ in $\Omega$. We remark that $v_i$ still depends on $\eta$ via $v_i=-L_\eps^\eta(\na p_i(u))$.

\subsection{Estimates uniform in $\eta$} Let $u_i^\eta:=u_i$ and $v_i^\eta:=v_i$. We prove some estimates uniform in $\eta$.

\begin{lemma}\label{lem.eta}
There exists a constant $C>0$, which is independent of $\eta$, such that for $i=1,\ldots,n$,
\begin{align*}
  \|u_i^\eta\log u_i^\eta\|_{L^\infty(0,T;L^1(\Omega))}
  + \|(u_i^\eta)^2\log u_i^\eta\|_{L^1(\Omega_T)} 
  + \|\nabla u_i^\eta\|_{L^{4/3}(\Omega_T)} &\le C, \\
  \|(u_i^\eta)^{1/2}\|_{L^2(0,T;H^1(\Omega))}
  + \|K_\eps^\eta(\na u_i^\eta)\|_{L^2(\Omega_T)} 
  + \|\pa_t u_i\|_{L^1(0,T;W^{1,\infty}(\Omega)')} &\le C.
\end{align*}
\end{lemma}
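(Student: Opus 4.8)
The plan is to derive the $\eta$-independent estimates from the approximate entropy inequality \eqref{2.aux} (with $\rho=1$), this time keeping careful track of which constants depend on $\eta$ and which do not. First I would test the weak formulation \eqref{2.approx2} with $S_N^0(u_i^\eta+\delta)$ (equivalently, work from the identity that led to \eqref{2.aux}), but now exploiting that the bad Lotka--Volterra term $-\sum_j b_{ij}\int (u_i^\eta)_+^N u_j^\eta S_N^0(u_i^\eta)dx$ on the right-hand side is \emph{not} discarded: the diagonal part $-b_{ii}\int (u_i^\eta)^2\log u_i^\eta\,dx$ is moved to the left-hand side, yielding exactly the four terms of \eqref{1.dH1dt}. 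The key structural point is that the nonlocal term contributes $\alpha\|K_\eps^\eta(\na u_i^\eta)\|_{L^2}^2$ with $\alpha>0$ the smallest eigenvalue of $(a_{ij})$, \emph{independently of $\eta$}, since Lemma~\ref{lem.Lepseta} and the identity $v_i=\sum_j a_{ij}(K_\eps^\eta)^2(\na u_j)$ are uniform in $\eta$. The right-hand side, after using $f_i(u)-b_{ii}u_i = b_{i0}-\sum_{j\ne i}b_{ij}u_j$ and the sign conditions from (A2), is bounded by $C\sum_i\int_\Omega u_i^\eta|\log u_i^\eta|\,dx$, which is controlled by $H_1(u^\eta)$ plus lower-order $L^1$ terms; Gronwall's lemma then gives, uniformly in $\eta$ and in $N$,
\begin{align*}
  \|u_i^\eta\log u_i^\eta\|_{L^\infty(0,T;L^1(\Omega))}
  + \|\na (u_i^\eta)^{1/2}\|_{L^2(\Omega_T)}
  + \|K_\eps^\eta(\na u_i^\eta)\|_{L^2(\Omega_T)}
  + \|(u_i^\eta)^2\log u_i^\eta\|_{L^1(\Omega_T)} \le C.
\end{align*}
(The $L^\infty$ bounds from Lemma~\ref{lem.Linfty} are $\eta$-dependent and are used only at the $N\to\infty$ stage, not here.)

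Next I would upgrade the $L^2$-gradient bound of the square root to an $L^{4/3}$ bound on $\na u_i^\eta$ itself. Writing $\na u_i^\eta = 2(u_i^\eta)^{1/2}\na(u_i^\eta)^{1/2}$ and applying H\"older with exponents $(2,4)$ on the pair $((u_i^\eta)^{1/2},\, \na(u_i^\eta)^{1/2})$ after a further H\"older in space-time: one has $\|(u_i^\eta)^{1/2}\|_{L^4(\Omega_T)}^2 = \|u_i^\eta\|_{L^2(\Omega_T)}$, and $u_i^\eta\in L^2(\Omega_T)$ follows from the Gagliardo--Nirenberg interpolation $\|w\|_{L^2(\Omega)}^2 \le \|w\|_{H^1(\Omega)}^{2\theta}\|w\|_{L^1(\Omega)}^{2(1-\theta)}$ applied to $w=(u_i^\eta)^{1/2}$ together with the $L^\infty(0,T;L^1)$ bound on $u_i^\eta$ (i.e.\ the $L^\infty(0,T;L^2)$ bound on $(u_i^\eta)^{1/2}$) and the $L^2(0,T;H^1)$ bound on $(u_i^\eta)^{1/2}$. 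A short computation with the exponents $4/3 = \tfrac{1}{\,1/2 + 1/4\,}^{-1}\cdots$ — more precisely, $1/(4/3) = 1/2 + 1/4$ in the conjugate sense — shows $\|\na u_i^\eta\|_{L^{4/3}(\Omega_T)} \le 2\|(u_i^\eta)^{1/2}\|_{L^4(\Omega_T)}\|\na(u_i^\eta)^{1/2}\|_{L^2(\Omega_T)} \le C$. This is the standard degenerate-diffusion gradient estimate and requires no new ideas.

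Finally, for the time derivative, I would take $\phi\in L^\infty(0,T;W^{1,\infty}(\Omega))$ in \eqref{2.approx2} and estimate the three terms on the right: the diffusion term by $\sigma\|\na u_i^\eta\|_{L^{4/3}(\Omega_T)}\|\na\phi\|_{L^4(\Omega_T)}$; the convective term $\int u_i^\eta v_i^\eta\cdot\na\phi$ by $\|u_i^\eta\|_{L^2(\Omega_T)}\|v_i^\eta\|_{L^2(\Omega_T)}\|\na\phi\|_{L^\infty}$, where $\|v_i^\eta\|_{L^2(\Omega_T)} = \|L_\eps^\eta(\na p_i(u^\eta))\|_{L^2(\Omega_T)} \le \sum_j|a_{ij}|\,\eps^{-1/2}\|K_\eps^\eta(\na u_j^\eta)\|_{L^2(\Omega_T)}$ by Lemma~\ref{lem.Lepseta} (uniform in $\eta$); and the reaction term $\int u_i^\eta f_i(u^\eta)\phi$ by the $L^2(\Omega_T)$ bounds via the polynomial form of $f_i$. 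Hence $\|\pa_t u_i^\eta\|_{L^1(0,T;W^{1,\infty}(\Omega)')}\le C$, which — since $W^{1,\infty}(\Omega)\hookrightarrow H^{m'}(\Omega)$ for $m'>d/2+1$ large enough — also gives the $H^{m'}(\Omega)'$ version. The main obstacle is conceptual rather than computational: one must verify that \emph{every} constant entering the Gronwall argument for the Boltzmann--Shannon entropy — in particular the coercivity constant $\alpha$ from the nonlocal term and the norm bound $\eps^{-1/2}$ relating $L_\eps^\eta$ to $K_\eps^\eta$ — is genuinely independent of $\eta$, using Lemma~\ref{lem.Lepseta} in place of the $\eta$-dependent smoothing estimate $L_\eps^\eta:H^1(\Omega)'\to H^m(\Omega)$ that was only invoked for the (non-uniform) $N\to\infty$ passage.
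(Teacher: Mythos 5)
Your overall strategy coincides with the paper's: test \eqref{2.approx2} with (a regularization of) $\log u_i^\eta$, use the positive definiteness of $(a_{ij})$ to extract $\alpha\|K_\eps^\eta(\na u_i^\eta)\|_{L^2}^2$ with $\alpha$ independent of $\eta$, keep the diagonal Lotka--Volterra contribution $-b_{ii}\int (u_i^\eta)^2\log u_i^\eta$ on the left, close with Gronwall, then obtain $\na u_i^\eta\in L^{4/3}$ by H\"older on $\na u_i^\eta=2(u_i^\eta)^{1/2}\na(u_i^\eta)^{1/2}$ and the time derivative by duality. Two of your intermediate estimates are loose but harmless: the off-diagonal terms $-b_{ij}\int u_i^\eta u_j^\eta\log u_i^\eta$ are not bounded by $C\int u_i^\eta|\log u_i^\eta|$ as written (they involve a product of two densities); the correct argument is the sign splitting — the integrand is nonpositive where $u_i^\eta>1$ and bounded by $u_j^\eta/e$ where $u_i^\eta\le 1$ — which is presumably what you mean by ``the sign conditions from (A2)''. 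Likewise, the bound on $v_i^\eta$ follows directly from the second term in Lemma~\ref{lem.Lepseta}, namely $\|L_\eps^\eta(\na u_j^\eta)\|_{L^2}\le\|K_\eps^\eta(\na u_j^\eta)\|_{L^2}$ (apply the lemma to $g=\na u_j^\eta$); your factor $\eps^{-1/2}$ mixes up $K_\eps^\eta(u_j)$ with $K_\eps^\eta(\na u_j)$, though the conclusion is unaffected.

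The one step that genuinely fails as written is your derivation of the uniform $L^2(\Omega_T)$ bound for $u_i^\eta$ by Gagliardo--Nirenberg interpolation from $(u_i^\eta)^{1/2}\in L^\infty(0,T;L^2)\cap L^2(0,T;H^1)$. That interpolation places $(u_i^\eta)^{1/2}$ in $L^{2(d+2)/d}(\Omega_T)$, hence $u_i^\eta\in L^{(d+2)/d}(\Omega_T)$, which reaches $L^2$ only for $d\le 2$; for $d\ge 3$ (allowed by (A1)) the exponent is too small, and the displayed inequality $\|w\|_{L^2}^2\le\|w\|_{H^1}^{2\theta}\|w\|_{L^1}^{2(1-\theta)}$ in any case only recovers $u_i^\eta\in L^1$. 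The paper avoids this entirely: the uniform $L^1(\Omega_T)$ bound on $(u_i^\eta)^2\log u_i^\eta$, which you have already established in the Gronwall step (and which is exactly why Assumption (A2) requires $b_{ii}>0$), immediately gives $u_i^\eta\in L^2(\Omega_T)$ uniformly in $\eta$, in every dimension. Replace the interpolation argument by this observation and the rest of your proof (the H\"older step with $1/(4/3)=1/2+1/4$, the duality estimate for $\pa_t u_i^\eta$) goes through exactly as in the paper.
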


\begin{proof}
We use the admissible test function $\log(u_i^\eta+\delta)$ with $\delta>0$ in \eqref{2.approx2} and integrate over $(0,t)$ for $0<t<T$:
\begin{align*}
  \int_\Omega&(u_i(t)+\delta)\big(\log(u_i^\eta(t)+\delta)-1\big)dx
  + 4\sigma\int_0^t\int_\Omega|\na(u_i^\eta+\delta)^{1/2}|^2 dxds \\
  &= \int_\Omega(u_i^0+\delta)\big(\log(u_i^0+\delta)-1\big)dx
  + \int_0^t\int_\Omega \frac{u_i^\eta}{u_i^\eta+\delta} v_i^\eta
  \cdot\na u_i^\eta dxds \\
  &\phantom{xx}
  + \int_0^t\int_\Omega u_i^\eta f_i(u^\eta)\log(u_i^\eta+\delta)dxds.
\end{align*}
We infer from dominated convergence (applied to the first integral on the left-hand side and the integrals on the right-hand side) and monotone convergence (applied to the second integral on the left-hand side) that, in the limit $\delta\to 0$ and after summation over $i=1,\ldots,n$,
\begin{align}\label{2.aux2}
  \sum_{i=1}^n &\int_\Omega u_i^\eta(t)\big(\log u_i^\eta(t)-1\big)dx
  + 4\sigma\sum_{i=1}^n\int_0^t\int_\Omega|\na(u_i^\eta)^{1/2}|^2 dxds \\
  &= \sum_{i=1}^n\int_\Omega u_i^0(\log u_i^0-1)dx
  + \sum_{i=1}^n\int_0^t\int_\Omega v_i^\eta\cdot\na u_i^\eta dxds \nonumber \\
  &\phantom{xx}
  + \sum_{i=1}^n\int_0^t\int_\Omega u_i^\eta f_i(u^\eta)
  \log u_i^\eta dxds. \nonumber 
\end{align}
The second term on the right-hand side can be rewritten as
\begin{align*}
  \sum_{i=1}^n&\int_0^t\int_\Omega v_i^\eta\cdot\na u_i^\eta dxds
  = -\sum_{i,j=1}^na_{ij}\int_0^t\int_\Omega L_\eps^\eta(\na u_j^\eta)
  \cdot\na u_i^\eta dxds \\
  &= -\sum_{i,j=1}^na_{ij}\int_0^t\int_\Omega K_\eps^\eta(\na u_j^\eta)
  \cdot K_\eps^\eta(\na u_i^\eta) dxds
  \le -\alpha\sum_{i=1}^n\int_0^t\int_\Omega
  |K_\eps^\eta(\na u_i^\eta)|^2 dxds.
\end{align*}
The last term on the right-hand side of \eqref{2.aux2} becomes
\begin{align*}
  \sum_{i=1}^n&\int_0^t\int_\Omega u_i^\eta f_i(u^\eta)\log u_i^\eta dxds
  = -\sum_{i=1}^nb_{ii}\int_0^t\int_\Omega 
  (u_i^\eta)^2\log u_i^\eta dxds \\
  &+ \sum_{i=1}^n b_{i0}\int_0^t\int_\Omega u_i^\eta\log u_i^\eta dxds
  - \sum_{i\neq j}b_{ij}\int_0^t\bigg(\int_{\{0\le u_i^\eta\le 1\}}
  + \int_{\{u_i^\eta>1\}}\bigg)u_i^\eta u_j^\eta\log u_i^\eta dxds.
\end{align*}
The first term on the right-hand side is bounded from above. The second term can be estimated by the elementary inequality $z\log z\le 2z(\log z-1)+e$ for $z\ge 0$ and Gronwall's inequality. Taking into account that $u_i^\eta\log u_i^\eta>0$ if $u_i^\eta>1$ and $-1/e\le u_i^\eta\log u_i^\eta\le 0$ if $0\le u_i^\eta\le 1$, we find for the third term on the right-hand side that
\begin{align*}
  -\sum_{i\neq j}&b_{ij}\int_0^t\bigg(\int_{\{0\le u_i^\eta\le 1\}}
  + \int_{\{u_i^\eta>1\}}\bigg)u_i^\eta u_j^\eta\log u_i^\eta dxds
  \le \frac{1}{e}\sum_{i\neq j}\int_0^t\int_{\{0\le u_i^\eta\le 1\}}
  u_j^\eta dx \\
  &\le \frac{1}{e}\sum_{j=1}^n\int_0^t\int_\Omega u_j^\eta dxds
  \le \frac{1}{e}\sum_{i=1}^n\int_0^t\int_\Omega 
  u_i^\eta(\log u_i^\eta-1)dxds + C,
\end{align*}
and the last step follows from the inequality $z\le z(\log z-1)+e$ for $z\ge 0$, where $C=n|\Omega|T$. Inserting these estimates into \eqref{2.aux2} and applying Gronwall's inequality leads to
\begin{align}\label{2.ei.eta}
  \sum_{i=1}^n&\int_\Omega u_i^\eta(t)\big(\log u_i^\eta(t)-1\big)dx
  + 4\sigma\sum_{i=1}^n\int_0^t\int_\Omega|\na(u_i^\eta)^{1/2}|^2 dxds \\
  &+ \alpha\sum_{i=1}^n\int_0^t\int_\Omega|K_\eps^\eta(\na u_i^\eta)|^2
  dxds +\sum_{i=1}^nb_{ii}\int_0^t\int_\Omega 
  (u_i^\eta)^2\log u_i^\eta dxds \le C(u^0,T). \nonumber
\end{align}

It remains to derive the bound for the time derivative of $u_i^\eta$. The uniform bound for $K_\eps^\eta(\na u_i^\eta)$ in $L^2(\Omega_T)$ and estimate \eqref{2.estKL} show that $L_\eps^\eta(\na u_i^\eta)$ is uniformly bounded in $L^2(\Omega_T)$. Thus, $(u_i^\eta v_i^\eta)$ is bounded in $L^1(\Omega_T)$. (Note that the $L^\infty(\Omega_T)$ bound for $u_i^\eta$ in Lemma \ref{lem.Linfty} is not uniform in $\eta$.) This shows that $\diver(u_i^\eta v_i^\eta)\in L^1(0,T;W^{1,\infty}(\Omega)')$. It follows from the previous estimates that $(u^\eta_i)^2 \log u^\eta_i \in L^1(\Omega_T)$, so that $u^\eta_i$ is uniformly bounded in $L^2(\Omega_T)$. Thus, thanks to the equality $\na u_i^\eta=2(u_i^\eta)^{1/2}\na(u_i^\eta)^{1/2}\in L^{4/3}(\Omega_T)$ and the H\"older inequality (with exponents $3$ and $3/2$), we have 
\begin{align*}
  \int_0^T\int_\Omega|\na u^\eta_i|^{4/3}dxdt 
  \leq 2^{4/3}\|u^\eta_i\|^{2/3}_{L^2(\Omega_T)}
  \|\na(u^\eta_i)^{1/2} \|^{4/3}_{L^2(\Omega_T)}.
\end{align*}
We deduce from Lemma~\ref{lem.eta} that $\na u_i^\eta \in L^{4/3}(\Omega_T)$ (uniformly in $\eta$) and hence $\Delta u_i^\eta\in L^{4/3}(0,T;W^{1,4}(\Omega)')$ as well as $u_i^\eta f_i(u^\eta)\in L^1(0,T;L^1(\Omega))$ uniformly in $\eta$. We conclude that $(\pa_t u_i^\eta)$ is bounded in $L^1(0,T;W^{1,\infty}(\Omega)')$, which finishes the proof.
\end{proof}

\subsection{Limit $\eta\to 0$}\label{sec.eta}

We infer from the gradient bound of Lemma \ref{lem.eta} in $L^{4/3}(\Omega_T)$ that, up to a subsequence, as $\eta\to 0$,
\begin{align*}
  \na u_i^\eta\rightharpoonup \na u_i \quad\mbox{weakly in }
  L^{4/3}(\Omega_T).
\end{align*}
By the estimates from Lemma \ref{lem.eta}, the Aubin--Lions compactness lemma shows the existence of a subsequence (not relabeled) such that $u_i^\eta\to u_i$ strongly in $L^{4/3}(\Omega_T)$ and a.e. We deduce from the $L^1(\Omega_T)$ bound for $(u_i^\eta)^2\log u_i^\eta$ and the de la Vall\'ee--Poussin theorem that
\begin{align*}
  u_i^\eta\to u_i\quad\mbox{strongly in }L^2(\Omega_T),
\end{align*}
which is sufficient to conclude that $u_i^\eta f_i(u^\eta)\to u_i f_i(u)$ strongly in $L^1(\Omega_T)$. 

Since $L^1(0,T;W^{1,\infty}(\Omega)')$ is not reflexive, we cannot extract a converging subsequence of $\pa_t u_i^\eta$ in that space. However, a limit in the larger space of functions of bounded variation in time can be proved. For this, let $m'\in\N$ be such that the embedding $H^{m'}(\Omega)\hookrightarrow W^{1,\infty}(\Omega)$ is continuous and dense (choose $m'>d/2+1$). Then $W^{1,\infty}(\Omega)'\hookrightarrow H^{m'}(\Omega)'$ is continuous. It follows from a variant of Helly's selection theorem (see Theorem \ref{thm.helly} in Appendix \ref{sec.app}) that $u_i^\eta\rightharpoonup u_i$ weakly in $BV([0,T];H^{m'}(\Omega)')$, in particular,
\begin{align*}
  \pa_t u_i^\eta\rightharpoonup \pa_t u_i \quad\mbox{weakly in }
  \mathcal{M}([0,T];H^{m'}(\Omega)'),
\end{align*}
where $\mathcal{M}$ denotes the space of Radon measures with the total variation norm (we refer the reader to Appendix~\ref{sec.app} for details). Note that the embedding $W^{1,\infty}(\Omega)'\hookrightarrow H^{m'}(\Omega)'$ is needed to ensure measurability ($H^{m'}(\Omega)'$ should be separable) and to characterize exactly the dual spaces for weak convergence ($H^{m'}(\Omega)'$ should have the Radon--Nikod\'ym property, e.g., being reflexive).

By Lemma \ref{lem.Lepseta}, the uniform bound for $K_\eps^\eta(\na u_i^\eta)$ in $L^2(\Omega_T)$ implies the same bound for $L_\eps^\eta(\na u_i^\eta)$ and consequently for $L_\eps^\eta(\na p_i(u^\eta))$. Then, up to a subsequence, $-L_\eps^\eta(\na p_i(u^\eta))\rightharpoonup v$ weakly in $L^2(\Omega_T)$ for some $v\in L^2(\Omega_T)$. We want to identify $v$ with $-L_\eps(\na p_i(u))$. This follows as in the proof of Lemma \ref{lem.Linfty} from $p_i(u^\eta)\to p_i(u)$ strongly in $L^2(\Omega_T)$ and
\begin{align*}
  \langle L_\eps^\eta(\na p_i(u^\eta)),\phi\rangle
  = -\langle p_i(u^\eta),\diver L_\eps^\eta(\phi)\rangle
  \to -\langle p_i(u),\diver L_\eps(\phi)\rangle
  = \langle L_\eps(\na p_i(u)),\phi\rangle,
\end{align*}
if $L_\eps^\eta(\phi)\rightharpoonup L_\eps(\phi)$ weakly in $L^2(0,T;H^1(\Omega))$ holds for any fixed test function $\phi$; see the following lemma.
\begin{lemma}
Let $\phi\in L^2(0,T;H^1(\Omega)')$. Then $L_\eps^\eta(\phi)\rightharpoonup L_\eps(\phi)$ weakly in   $L^2(0,T;H^1(\Omega))$.
\end{lemma}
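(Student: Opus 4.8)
The plan is to derive an $\eta$-uniform bound for $L^\eta_\eps(\phi)$ in $L^2(0,T;H^1(\Omega))$ together with the smallness of the regularizing highest-order term, extract a weak limit, and identify it with $L_\eps(\phi)$ by comparing the two weak formulations. I would write $v^\eta:=L^\eta_\eps(\phi)$, so that for a.e.\ $t\in(0,T)$ one has $v^\eta(t)\in H^m(\Omega)\cap H^1_0(\Omega)$ and
\begin{align*}
  \eta\int_\Omega\sum_{|\alpha|=m}D^\alpha v^\eta(t)\cdot D^\alpha\zeta\,dx
  + \int_\Omega\big(\eps\na v^\eta(t):\na\zeta + v^\eta(t)\cdot\zeta\big)dx
  = \langle\phi(t),\zeta\rangle
\end{align*}
for all $\zeta\in H^m(\Omega)\cap H^1_0(\Omega)$. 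Choosing $\zeta=v^\eta(t)$, integrating over $(0,T)$, and using the Cauchy--Schwarz inequality in time gives
\begin{align*}
  \eta\|D^m v^\eta\|_{L^2(\Omega_T)}^2 + \eps\|\na v^\eta\|_{L^2(\Omega_T)}^2
  + \|v^\eta\|_{L^2(\Omega_T)}^2
  = \int_0^T\langle\phi,v^\eta\rangle\,dt
  \le \|\phi\|_{L^2(0,T;H^1(\Omega)')}\|v^\eta\|_{L^2(0,T;H^1(\Omega))},
\end{align*}
whence $\|v^\eta\|_{L^2(0,T;H^1(\Omega))}\le C(\eps)\|\phi\|_{L^2(0,T;H^1(\Omega)')}$ and $\eta\|D^m v^\eta\|_{L^2(\Omega_T)}^2\le C$, both uniformly in $\eta$. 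Therefore, up to a subsequence, $v^\eta\rightharpoonup w$ weakly in $L^2(0,T;H^1(\Omega))$, and since $L^2(0,T;H^1_0(\Omega))$ is a closed, hence weakly closed, subspace, $w\in L^2(0,T;H^1_0(\Omega))$.

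Next I would identify $w$ with $L_\eps(\phi)$. Because $C_c^\infty(\Omega)\subset H^m(\Omega)\cap H^1_0(\Omega)$, every $\zeta\in C_c^\infty(\Omega_T)$ is an admissible test function; integrating the weak formulation of $v^\eta$ in time yields
\begin{align*}
  \eta\int_0^T\!\!\int_\Omega\sum_{|\alpha|=m}D^\alpha v^\eta\cdot D^\alpha\zeta\,dxdt
  + \int_0^T\!\!\int_\Omega\big(\eps\na v^\eta:\na\zeta + v^\eta\cdot\zeta\big)dxdt
  = \int_0^T\langle\phi,\zeta\rangle\,dt.
\end{align*}
The first term is bounded by $\eta\|D^m v^\eta\|_{L^2(\Omega_T)}\|D^m\zeta\|_{L^2(\Omega_T)}\le C\sqrt{\eta}\,\|D^m\zeta\|_{L^2(\Omega_T)}\to 0$, while the remaining terms pass to the limit by the weak convergence of $v^\eta$. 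Hence
\begin{align*}
  \int_0^T\int_\Omega\big(\eps\na w:\na\zeta + w\cdot\zeta\big)dxdt
  = \int_0^T\langle\phi,\zeta\rangle\,dt
  \quad\text{for all }\zeta\in C_c^\infty(\Omega_T).
\end{align*}
Since both sides are continuous in $\zeta$ with respect to the $L^2(0,T;H^1(\Omega))$ norm and $C_c^\infty(\Omega_T)$ is dense in $L^2(0,T;H^1_0(\Omega))$, the identity extends to all $\zeta\in L^2(0,T;H^1_0(\Omega))$. Together with $w\in L^2(0,T;H^1_0(\Omega))$, this is precisely the weak formulation characterizing $w(t)=L_\eps(\phi(t))$ for a.e.\ $t$, i.e.\ $w=L_\eps(\phi)$.

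Finally, since the limit $L_\eps(\phi)$ is independent of the extracted subsequence, a routine subsequence argument (every subsequence of the bounded family $(v^\eta)$ has a further weakly convergent subsequence, necessarily with limit $L_\eps(\phi)$) shows that the whole family converges, $L^\eta_\eps(\phi)\rightharpoonup L_\eps(\phi)$ weakly in $L^2(0,T;H^1(\Omega))$ as $\eta\to0$. The only points requiring some care are the $\sqrt{\eta}$-smallness of the highest-order term — which is what makes the regularization disappear in the limit — and the stability of the homogeneous Dirichlet condition under weak convergence in $L^2(0,T;H^1(\Omega))$; neither presents a genuine obstacle.
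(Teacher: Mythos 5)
Your proof is correct and follows essentially the same route as the paper's: test the weak formulation with the solution itself to obtain $\eta$-uniform bounds on $v^\eta$ in $L^2(0,T;H^1)$ and $\sqrt{\eta}\,D^m v^\eta$ in $L^2$, extract a weak limit, pass to the limit in the weak formulation (the $\eta$-term vanishes), identify the limit with $L_\eps(\phi)$ by density, and upgrade to whole-family convergence by uniqueness of the limit. The only small refinement you add — explicitly noting that the limit stays in $L^2(0,T;H^1_0(\Omega))$ and working with $C_c^\infty$ test functions before the density step — is welcome precision, since the paper's concluding density remark should indeed target $H^1_0(\Omega)$ rather than $H^1(\Omega)$.
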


\begin{proof}
We set $w^\eta:=L_\eps^\eta(\phi)$. It is sufficient to show that $w^\eta\rightharpoonup L_\eps(\phi)$ weakly in $L^2(\Omega_T)$. We use $\psi=w^\eta$ in the weak formulation of $L_\eps^\eta(\phi)=w^\eta$,
\begin{align}\label{2.aux3}
  \eta\int_\Omega\sum_{|\alpha|=m}D^\alpha w^\eta\cdot D^\alpha\psi dx
  + \int_\Omega\big(\eps\na w^\eta:\na\psi + w^\eta\cdot\psi\big)dx
  = \langle \phi,\psi\rangle.
\end{align}
Then an application of Young's inequality yields
\begin{align*}
  \eta\sum_{|\alpha|=m}\|D^\alpha w^\eta\|_{L^2(\Omega)}^2
  + \eps\|\na w^\eta\|_{L^2(\Omega)}^2
  + \|w^\eta\|_{L^2(\Omega)}^2 = |\langle\phi,w^\eta\rangle|
  \le \frac{1}{2}\|\phi\|_{L^2(\Omega)}^2 
  + \frac12\|w^\eta\|_{L^2(\Omega)}^2.
\end{align*}
Absorbing the last term by the left-hand side, it follows that $(w^\eta)$ is bounded in $L^2(0,T;$ $H^1(\Omega))$ and $(\sqrt{\eta}D^\alpha w^\eta)$ is bounded in $L^2(\Omega_T)$ for any $|\alpha|=m$. Thus, for some $w_i\in L^2(0,T;H^1(\Omega))$ and up to subsequences, 
\begin{align*}
  w_i^\eta\rightharpoonup w_i
  &\quad\mbox{weakly in }L^2(0,T;H^1(\Omega)), \\
  \eta D^\alpha w_i^\eta \to 0 &\quad\mbox{strongly in }L^2(\Omega_T),\  |\alpha|=m,\ i=1,\ldots,n.
\end{align*}
The limit $\eta\to 0$ in \eqref{2.aux3} shows that $w$ solves
\begin{align*}
  \int_\Omega\big(\eps \na w:\na\psi dx + w\cdot\psi\big)dx 
  = \langle\phi,\psi\rangle.
\end{align*}
By density, this equation holds for all $\psi\in L^2(0,T;H^1(\Omega))$. 
Hence, $w=L_\eps(\phi)$. Since the limit is unique, we infer that the entire sequence converges, $w^\eta\rightharpoonup L_\eps(\phi)$ in $L^2(0,T;H^1(\Omega))$. This proves the lemma.
\end{proof}

We have assumed in Lemma \ref{lem.Linfty} that the initial datum satisfies $u^0\in L^\infty(\Omega;\R^n)$. We may reduce this regularity to $u_i^0\log u_i^0\in L^1(\Omega)$ by approximating $u_i^0$ by a function $u_i^{0,\eta}\in L^\infty(\Omega)$ (using for instance a cutoff at level $1/\eta$). Then the above proof still works, since the uniform bounds depend on $u_i^0$ only via the $L^1(\Omega)$ norm of $u_i^0 \log u_i^0$, and the initial datum converges to $u_i^0$. 

Similarly as in the proof of $L_\eps^\eta(\na u_i^\eta)\rightharpoonup L_\eps(\na u_i)$ weakly in $L^2(\Omega_T)$, we show the weak limit $K_\eps^\eta(\na u_i^\eta)\rightharpoonup K_\eps(\na u_i)$ in $L^2(\Omega_T)$. In particular, because of the weak lower semicontinuity of the norm,
$$
  \int_0^T\int_\Omega|K_\eps(\na u_i)|^2 dxdt
  \le \liminf_{\eta\to 0}\int_0^T\int_\Omega
  |K_\eps^\eta(\na u_i^\eta)|^2 dxdt.
$$
The a.e.\ convergence of $(u_i^\eta)$ and the bounds from \eqref{2.ei.eta} allow us to apply Fatou's lemma to conclude that $u_i(\log u_i-1)\in L^\infty(0,T;L^1(\Omega))$ and $u_i^2\log u_i\in L^2(\Omega_T)$, which proves the entropy inequality \eqref{1.ei} and concludes the proof of Theorem~\ref{thm.ex}.


\section{Boundedness}\label{sec.bounded} 

To complete the proof of Theorem \ref{thm.ex}, it remains to show Lemma \ref{lem.Linfty}. It is shown by using the Alikakos method as in \cite{HoJu20}. Since the proof is rather technical, we sketch the proof first before presenting the rigorous proof. 

\subsection{Formal argument} 

The idea is to use $u_i^\gamma$ for $\gamma\ge 1$ as a test function in the approximate problem \eqref{2.approx}, which leads to
\begin{align}\label{3.aux}
  \frac{1}{\gamma+1}&\frac{d}{dt}\int_\Omega u_i^{\gamma+1}dx
  + \frac{4\gamma\sigma}{(\gamma+1)^2}\int_\Omega
  |\na u_i^{(\gamma+1)/2}|^2 dx \\
  &= \frac{2\gamma}{\gamma+1}\int_\Omega u_i^{(\gamma+1)/2}v_i
  \cdot\na u_i^{(\gamma+1)/2}dx 
  + \int_\Omega u_i^{\gamma+1}f_i(u)dx \nonumber \\
  &\le \frac{2\gamma}{\gamma+1}\|u_i^{(\gamma+1)/2}\|_{L^2(\Omega)}
  \|v_i\|_{L^\infty(\Omega)}\|\na u_i^{(\gamma+1)/2}\|_{L^2(\Omega)}
  + b_{i0}\int_\Omega (u_i^{(\gamma+1)/2})^2 dx,
  \nonumber
\end{align}
where $v_i=-L_\eps(\na p_i(u))$ and we have applied H\"older's inequality in the last step. By assumption on the solution operator $L_\eps$, the norm $\|v_i\|_{L^\infty(\Omega)}$ is bounded uniformly in $\eps$ if $u_i$ is uniformly bounded in $L^\infty(0,T;L^1(\Omega))$, and this bound is obtained by using $\phi=1$ as a test function in \eqref{2.approx}. A naive application of Young's and Gronwall's inequalities would lead to bounds that tend to infinity as $\gamma\to\infty$. Thus, we need a more careful treatment based on the Gagliardo--Nirenberg inequality and an iteration argument. 

We use the Gagliardo--Nirenberg inequality with $\theta=d/(d+2)<1$ to find that
\begin{align*}
  \|u_i^{(\gamma+1)/2}\|_{L^2(\Omega)}
  \le C\|\na u_i^{(\gamma+1)/2}\|_{L^2(\Omega)}^\theta
  \|u_i^{(\gamma+1)/2}\|_{L^1(\Omega)}^{1-\theta} 
  + C\|u_i^{(\gamma+1)/2}\|_{L^1(\Omega)}.
\end{align*}
Inserting this expression into \eqref{3.aux}, applying the Young inequality $ab\le \delta a^p + \delta^{-p'/p}b^{p'}$ with $p=2/(1+\theta)$, $p'=2/(1-\theta)$, and $\delta=\sigma/\gamma$ (which yields $\delta^{-p'/p}=(\gamma/\sigma)^{d+1}$), and absorbing the gradient term by the left-hand side of \eqref{3.aux} gives, after some computations detailed below, 
\begin{align*}
  \frac{1}{\gamma+1}\frac{d}{dt}\|u_i\|_{L^{\gamma+1}(\Omega)}^{\gamma+1}
  \le C(\gamma+1)^{d+1}\|u_i\|_{L^{(\gamma+1)/2}(\Omega)}^{\gamma+1}.
\end{align*}
It follows after an integration in time and taking the supremum that
\begin{align*}
  \|u_i\|_{L^\infty(0,T;L^{\gamma+1}(\Omega))}^{\gamma+1}
  \le \|u_i^0\|_{L^{\gamma+1}(\Omega)}^{\gamma+1}
  + C(T)(\gamma+1)^{d+2}\big(\|u_i\|_{L^\infty(0,T;L^{(\gamma+1)/2}
  (\Omega))}^{(\gamma+1)/2}\big)^2.
\end{align*}
Then $a_k:=\|u_i\|_{L^\infty(0,T;L^{2^k}(\Omega))}^{2^k}
+ \|u_i^0\|_{L^\infty(\Omega)}^{2^k}$ gives the recursion
$a_k\le \alpha^k a_{k-1}^2$ for some constant $\alpha>0$ independent of $k$. Solving the recursion yields
\begin{align*}
  \|u_i\|_{L^\infty(0,T;L^{2^k}(\Omega))}
  \le a_k^{2^{-k}} \le C\big(\|u_i\|_{L^\infty(0,T;L^1(\Omega))} 
  + \|u_i^0\|_{L^\infty(\Omega)}\big),
\end{align*}
and the limit $k\to \infty$ gives the result, since the right-hand side is independent of $k$. 

\subsection{Derivation of the recursion} 

Since the test function $u_i^{\gamma}$ may be not admissible, we need to use a suitable cutoff to make the above argument rigorous. Let $N>e^2$. We introduce
\begin{equation*}
  S_N^\gamma(z) = \int_0^z ((s)_+^N)^{\gamma-1} ds, \quad
  R_N^{\gamma+1}(z) = \int_0^z S_N^\gamma(s)ds,
\end{equation*}
recalling that $(z)_+^N=\max\{0,\min\{N,z\}\}$. Then the chain rules $\na S_N^\gamma(u_i)=[(u_i)_+^N]^{\gamma-1}\na u_i$ and $\na R_N^{\gamma+1}(u_i)=S_N^\gamma(u_i)\na u_i$ hold. 

\begin{lemma}
The functions $S_N^\gamma$ and $R_N^{\gamma}$ satisfy the following inequalities:
\begin{align}\label{3.prop1}
  (z)_+^N S_N^\gamma(z)\le \frac{1}{\gamma}\bigg(\frac{\gamma+1}{2}
  \bigg)^2 S_N^{(\gamma+1)/2}(z)^2, \quad
  [(z)_+^N]^{(\gamma+1)/2} 
  \le \frac{\gamma+1}{2}S_N^{(\gamma+1)/2}(z) &\mbox{ for }\gamma>0, \\
  R_N^{\gamma}(z) \ge \frac{1}{\gamma-1}S_N^{\gamma}(z), \quad
  R_N^{2\gamma}(z) \le \frac{\gamma(\gamma-1)^2}{2(2\gamma-1)}
  R_N^\gamma(z)^2 &\mbox{ for }\gamma>1. \label{3.prop2}
\end{align}
\end{lemma}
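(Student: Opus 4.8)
The plan is to prove each of the four inequalities by direct computation, distinguishing the two regimes $0 \le z \le N$ and $z > N$ according to the definition of $(z)_+^N$. All four follow once one writes down the explicit formulas for $S_N^\gamma$ and $R_N^{\gamma+1}$ in each regime, so I will first record those. For $0 \le z \le N$ we have $(s)_+^N = s$, hence $S_N^\gamma(z) = \int_0^z s^{\gamma-1}\,ds = z^\gamma/\gamma$ and $R_N^{\gamma+1}(z) = \int_0^z s^\gamma/\gamma\,ds = z^{\gamma+1}/(\gamma(\gamma+1))$; in particular $S_N^{(\gamma+1)/2}(z) = 2z^{(\gamma+1)/2}/(\gamma+1)$. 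For $z > N$ we have $S_N^\gamma(z) = N^\gamma/\gamma + N^{\gamma-1}(z-N)$ (since $(s)_+^N = N$ for $s \ge N$) and similarly $R_N^{\gamma+1}$ is a quadratic polynomial in $z-N$.

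The first inequality in \eqref{3.prop1} is an equality when $0 \le z \le N$: both sides equal $z^{\gamma+1}/\gamma$ (indeed $(z)_+^N S_N^\gamma(z) = z\cdot z^\gamma/\gamma$ and $\frac1\gamma\big(\frac{\gamma+1}{2}\big)^2 S_N^{(\gamma+1)/2}(z)^2 = \frac1\gamma\big(\frac{\gamma+1}{2}\big)^2\big(\frac{2z^{(\gamma+1)/2}}{\gamma+1}\big)^2 = \frac{z^{\gamma+1}}{\gamma}$). For $z > N$ I would set $t = z - N \ge 0$ and verify that the right-hand side minus the left-hand side is a nonnegative quadratic in $t$; the leading coefficients in $t^2$ already dominate, so the inequality reduces to checking a polynomial inequality in $t$ with nonnegative coefficients, which is immediate. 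The second inequality in \eqref{3.prop1} is again an equality on $[0,N]$ and, for $z > N$, reduces to $N^{(\gamma+1)/2} \le \frac{\gamma+1}{2}\big(\frac{2N^{(\gamma+1)/2}}{\gamma+1} + N^{(\gamma-1)/2}t\big)$, i.e. $0 \le N^{(\gamma+1)/2} + \frac{\gamma+1}{2}N^{(\gamma-1)/2}t$, which is clear. The inequalities in \eqref{3.prop2} are handled the same way: on $[0,N]$, $R_N^\gamma(z) = z^\gamma/(\gamma(\gamma-1))$ and $S_N^\gamma(z) = z^\gamma/\gamma$ give $R_N^\gamma(z) = \frac{1}{\gamma-1}S_N^\gamma(z)$ with equality, and $R_N^{2\gamma}(z) = z^{2\gamma}/(2\gamma(2\gamma-1))$ versus $\frac{\gamma(\gamma-1)^2}{2(2\gamma-1)}\big(z^\gamma/(\gamma(\gamma-1))\big)^2 = z^{2\gamma}/(2\gamma(2\gamma-1))$, again an equality; for $z > N$ one expands in $t = z-N$ and checks the polynomial inequality coefficient by coefficient.

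The one point that needs a little care — and which I expect to be the only real obstacle — is the $z > N$ case of the last inequality $R_N^{2\gamma}(z) \le \frac{\gamma(\gamma-1)^2}{2(2\gamma-1)} R_N^\gamma(z)^2$, because here the left side is a quadratic in $t$ while the right side is the square of a quadratic in $t$, i.e. a quartic, and one must confirm that the constant and linear terms in $t$ also satisfy the inequality (not just the top-order terms). Since the $t^0$ terms match by the $z \le N$ computation and all the $t$-coefficients on the right are nonnegative while those on the left are controlled, this is a routine but slightly tedious check; I would organize it by writing $R_N^\gamma(N+t) = R_N^\gamma(N) + S_N^\gamma(N)t + \tfrac12 N^{\gamma-1}t^2$ and $R_N^{2\gamma}(N+t) = R_N^{2\gamma}(N) + S_N^{2\gamma}(N)t + \tfrac12 N^{2\gamma-1}t^2$, substitute, and compare termwise, using $R_N^\gamma(N) = N^\gamma/(\gamma(\gamma-1))$ and $S_N^\gamma(N) = N^\gamma/\gamma$. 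No auxiliary results beyond elementary algebra are needed.
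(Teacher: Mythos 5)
Your approach --- explicit case-by-case computation on $[0,N]$ and $(N,\infty)$ with $t=z-N$ --- is exactly what the paper intends; the paper gives no details, merely asserting that the inequalities follow from elementary computations (citing an analogous lemma elsewhere), and your observation that all four inequalities are equalities on $[0,N]$ matches the paper's remark that $\gamma S_N^\gamma$ and $\gamma(\gamma-1)R_N^\gamma$ both approximate $z^\gamma$. The verification does go through: for the last inequality the constant, linear, and quadratic coefficients in $t$ agree exactly on both sides, and the cubic and quartic terms on the right are nonnegative. One indexing slip in your final organizational remark: since $(R_N^{\gamma+1})'=S_N^\gamma$ by definition, the correct expansion is $R_N^\gamma(N+t)=R_N^\gamma(N)+S_N^{\gamma-1}(N)\,t+\tfrac12 N^{\gamma-2}t^2$ (linear coefficient $N^{\gamma-1}/(\gamma-1)$, not $S_N^\gamma(N)=N^\gamma/\gamma$), and likewise $R_N^{2\gamma}(N+t)=R_N^{2\gamma}(N)+S_N^{2\gamma-1}(N)\,t+\tfrac12 N^{2\gamma-2}t^2$; with these coefficients the termwise comparison closes as you describe, whereas with the ones you wrote the powers of $N$ do not even match.
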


\begin{proof}
The inequalities are verified by elementary computations similar to the proof of \cite[Lemma 6]{JuVe23}. Notice that inequalities \eqref{3.prop1}--\eqref{3.prop2} reflect the fact that $\gamma S_N^\gamma(z)$ and $\gamma(\gamma-1)R_N^\gamma(z)$ are two different approximations of $z^\gamma$.
\end{proof}

For any $\gamma\ge 1$, the test function $S_N^\gamma(u_i)$ is admissible in \eqref{2.approx}, and we find that 
\begin{align*}
  \frac{d}{dt}&\int_\Omega R_N^{\gamma+1}(u_i)dx
  + \sigma\int_\Omega|\na S_N^{(\gamma+1)/2}(u_i)|^2 dx \\
  &= \int_\Omega (u_i)_+^N[(u_i)_+^N]^{(\gamma-1)/2}
  v_i\cdot\na S_N^{(\gamma+1)/2}(u_i)dx
  + \int_\Omega(u_i)_+^N S_N^\gamma(u_i)f_i(u)dx \\
  &\le \int_\Omega (u_i)_+^N[(u_i)_+^N]^{(\gamma-1)/2}
  v_i\cdot\na S_N^{(\gamma+1)/2}(u_i)dx
  + C\int_\Omega(u_i)_+^N S_N^\gamma(u_i)dx,
\end{align*}
recalling that $v_i=-L_\eps(\na p_i(u))$. We know already that $u_i$ is bounded in $L^\infty(0,T;L^1(\Omega))$ uniformly in $N$ (use the test function $\phi_i=1$ in \eqref{2.approx}). Then, by assumption, $v_i$ is uniformly bounded in $L^\infty(\Omega_T)$ and
\begin{align*}
  \frac{d}{dt}&\int_\Omega R_N^{\gamma+1}(u_i)dx
  + \sigma\int_\Omega|\na S_N^{(\gamma+1)/2}(u_i)|^2 dx \\
  &\le \|v_i\|_{L^\infty(\Omega_T)}\int_\Omega[(u_i)_+^N]^{(\gamma+1)/2}
  |\na S_N^{(\gamma+1)/2}| dx 
  + C\int_\Omega(u_i)_+^N S_N^\gamma(u_i)dx,
\end{align*}
Properties \eqref{3.prop1} and Young's inequality yield
\begin{align}\label{3.aux2}
  \frac{d}{dt}&\int_\Omega R_N^{\gamma+1}(u_i)dx
  + \sigma\int_\Omega|\na S_N^{(\gamma+1)/2}(u_i)|^2 dx \\
  &\le C\|[(u_i)_+^N]^{(\gamma+1)/2}\|_{L^2(\Omega)}
  \|\na S_N^{(\gamma+1)/2}\|_{L^2(\Omega)} 
  + C\int_\Omega(u_i)_+^N S_N^\gamma(u_i)dx \nonumber \\
  &\le C(\gamma+1)\|S_N^{(\gamma+1)/2}(u_i)\|_{L^2(\Omega)}
  \|\na S_N^{(\gamma+1)/2}\|_{L^2(\Omega)}
  + C(\gamma+1)^2\|S_N^{(\gamma+1)/2}(u_i)\|_{L^2(\Omega)}^2 \nonumber \\
  &\le \frac{\sigma}{4}\|\na S_N^{(\gamma+1)/2}\|_{L^2(\Omega)}^2
  + C(\sigma)(\gamma+1)^2\|S_N^{(\gamma+1)/2}(u_i)\|_{L^2(\Omega)}^2.
  \nonumber
\end{align}
The first term on the right-hand side is absorbed by the left-hand side, while the remaining term on the right-hand side is estimated by the Gagliardo--Nirenberg inequality with $\theta=d/(d+2)$:
\begin{align*}
  \|S_N^{(\gamma+1)/2}(u_i)\|_{L^2(\Omega)}
  \le C\|\na S_N^{(\gamma+1)/2}(u_i)\|_{L^2(\Omega)}^\theta
  \|S_N^{(\gamma+1)/2}(u_i)\|_{L^1(\Omega)}^{1-\theta}
  + C\|S_N^{(\gamma+1)/2}(u_i)\|_{L^1(\Omega)}.
\end{align*}
Consequently, by Young's inequality,
\begin{align*}
  C(&\gamma+1)^2\|S_N^{(\gamma+1)/2}(u_i)\|_{L^2(\Omega)}^2 \\
  &\le C(\gamma+1)^2
  \|\na S_N^{(\gamma+1)/2}(u_i)\|_{L^2(\Omega)}^{2\theta}
  \|S_N^{(\gamma+1)/2}(u_i)\|_{L^1(\Omega)}^{2(1-\theta)} 
  + C(\gamma+1)^2\|S_N^{(\gamma+1)/2}(u_i)\|_{L^1(\Omega)}^2 \\
  &\le \frac{\sigma}{4}\|\na S_N^{(\gamma+1)/2}(u_i)\|_{L^2(\Omega)}^2
  + C(\sigma)(\gamma+1)^{d+2}
  \|S_N^{(\gamma+1)/2}(u_i)\|_{L^1(\Omega)}^2.
\end{align*}
We insert this estimate into \eqref{3.aux2}:
\begin{align*}
  \frac{d}{dt}\int_\Omega R_N^{\gamma+1}(u_i)dx
  + \frac{\sigma}{2}\int_\Omega|\na S_N^{(\gamma+1)/2}(u_i)|^2 dx
  \le C(\gamma+1)^{d+2}\|S_N^{(\gamma+1)/2}(u_i)\|_{L^1(\Omega)}^2.
\end{align*}
An integration in time yields
\begin{align*}
  \|R_N^{\gamma+1}&(u_i(t))\|_{L^1(\Omega)} 
  \le \|R_N^{\gamma+1}(u_i^0)\|_{L^1(\Omega)}
  + C(\gamma+1)^{d+2}\int_0^t
  \|S_N^{(\gamma+1)/2}(u_i)\|_{L^1(\Omega)}^2 ds \\
  &\le C\|R_N^{\gamma+1}(u_i^0)\|_{L^\infty(\Omega)}
  + CT(\gamma+1)^{d+2}\|S_N^{(\gamma+1)/2}(u_i)
  \|_{L^\infty(0,T;L^1(\Omega))}^2.
\end{align*}
Finally, we take the supremum over time:
\begin{align}\label{3.aux3}
  \|R_N^{\gamma+1}(u_i)\|_{L^\infty(0,T;L^1(\Omega))}
  &\le C\|R_N^{\gamma+1}(u_i^0)\|_{L^\infty(\Omega)} \\
  &\phantom{xx}+ CT(\gamma+1)^{d+2}\|S_N^{(\gamma+1)/2}(u_i)
  \|_{L^\infty(0,T;L^1(\Omega))}^2. \nonumber 
\end{align}

We obtain in the particular case $\gamma=1$:
\begin{align}\label{3.L2}
  \|u_i\|_{L^\infty(0,T;L^2(\Omega))}^2
  &= 2\|R_N^2(u_i)\|_{L^\infty(\Omega);L^1(\Omega))} \\
  &\le 2C\|R_N^{2}(u_i^0)\|_{L^\infty(\Omega)}
  + 2CT 2^{d+2}\|S_N^{1}(u_i)
  \|_{L^\infty(0,T;L^1(\Omega))}^2 \nonumber \\
  &= C\|u_i^0\|_{L^\infty(\Omega)}^2 
  + 2CT 2^{d+2}\|u_i\|_{L^\infty(0,T;L^1(\Omega))}^2 \le C.
  \nonumber
\end{align}
For $\gamma>1$, we use the first property in \eqref{3.prop2} to infer from \eqref{3.aux3} that 
\begin{align}\label{3.aux0}
  \|S_N^{\gamma+1}(u_i)\|_{L^\infty(0,T;L^1(\Omega))}
  &\le C\gamma\|R_N^{\gamma+1}(u_i^0)\|_{L^\infty(\Omega)} \\
  &\phantom{xx}+ CT\gamma (\gamma+1)^{d+2}\|S_N^{(\gamma+1)/2}(u_i)
  \|_{L^\infty(0,T;L^1(\Omega))}^2.\nonumber
\end{align}
Setting $2^k=\gamma+1$ for $k\in\N$ with $k\geq 1$ and
$$
  a_k = \|R_N^{2^k}(u_i^0)\|_{L^\infty(\Omega)}
  + \|S_N^{2^k}(u_i)\|_{L^\infty(0,T;L^1(\Omega))},
$$
we obtain thanks to~\eqref{3.aux0}:
\begin{align*}
  a_k &\leq \|R_N^{2^k}(u_i^0)\|_{L^\infty(\Omega)}+ C(2^k-1)\|R_N^{2^k}(u_i^0)\|_{L^\infty(\Omega)} \\
  &\phantom{xx} + CT (2^k-1) 2^{k(d+2)}\|S_N^{2^{k-1}}(u_i)
  \|_{L^\infty(0,T;L^1(\Omega))}^2.
\end{align*}
Using the second property in \eqref{3.prop2}, this inequality becomes
\begin{align*}
  a_k &\le \big(1+C(2^k-1)\big)\|R_N^{2^k}(u_i^0)\|_{L^\infty(\Omega)}
  + CT(2^k-1)2^{k(d+2)}
  \|S_N^{2^{k-1}}(u_i)\|_{L^\infty(0,T;L^1(\Omega))}^2 \\
  &\le \big(1+C(2^k-1)\big)\frac{2^{k-1}(2^{k-1}-1)^2}{2(2^k-1)}
  \|R_N^{2^{k-1}}(u_i^0)\|_{L^\infty(\Omega)}^2 \\
  &\phantom{xx}+ CT(2^k-1)2^{k(d+2)}
  \|S_N^{2^{k-1}}(u_i)\|_{L^\infty(0,T;L^1(\Omega))}^2 \\
  &\le \max\bigg\{(1+C(2^k-1))\frac{2^k(2^k-2)^2}{16(2^k-1)},
  CT(2^k-1)2^{k(d+2)}\bigg\}a_{k-1}^2 \le \alpha^k a_{k-1}^2,
\end{align*}
where $\alpha=C(T)2^{d+3}$ and $C(T)$ does not depend on $k$. 

\subsection{Solution of the recursion} 

The recursion can be solved explicitly by setting $b_k=\alpha^{k+2}a_k$, leading to $b_k\le b_{k-1}^2$ and eventually to
$a_k\le \alpha^{3\cdot 2^{k-1}-k-2}a_1^{2^{k-1}}$ or
\begin{align}\label{3.aux4}
  \|S_N^{2^k}&(u_i)\|_{L^\infty(0,T;L^1(\Omega))}
  \le a_k \\
  &\le \alpha^{3\cdot 2^{k-1}-k-2}
  \big(\|S_N^2(u_i)\|_{L^\infty(0,T;L^1(\Omega))}
  + \|R_N^2(u_i^0)\|_{L^\infty(\Omega)}\big)^{2^{k-1}}. \nonumber 
\end{align}
Since $S_N^2(u_i)$ is controlled uniformly by $u_i^2$, the first norm on the right-hand side is bounded uniformly in $N$ because of \eqref{3.L2}. The second norm is bounded by assumption, noting that $R_N^2(u_i^0)=(u_i^0)^2/2$. Furthermore, the left-hand side is estimated from below according to
\begin{align*}
  \|S_N^\gamma(u_i)\|_{L^1(\Omega)}
  &= \int_\Omega\bigg\{\frac{u_i^\gamma}{\gamma}
  \mathrm{1}_{\{u_i\le N\}} + \bigg(\frac{N^\gamma}{\gamma}
  + N^{\gamma-1}(u_i-N)\bigg)\mathrm{1}_{\{u_i>N\}}\bigg\}dx \\
  &\ge \frac{1}{\gamma}\int_\Omega 
  u_i^\gamma \mathrm{1}_{\{u_i\le N\}}dx.
\end{align*}
By monotone convergence, we infer from \eqref{3.aux4} that
\begin{align*}
  2^{-k}\|u_i\|_{L^\infty(0,T;L^{2^k}(\Omega))}^{2^k}
  \le \liminf_{N\to\infty}
  \|S_N^{2^k}(u_i)\|_{L^\infty(0,T;L^1(\Omega))}
  \le \alpha^{3\cdot 2^{k-1}-k-2}C^{2^{k-1}}.
\end{align*}
Taking the $2^k$th root gives a uniform bound for the $L^\infty(0,T;L^{2^k}(\Omega))$ norm of $u_i$, which allows us to pass to the limit $k\to\infty$ and to conclude the proof.


\section{Uniqueness of bounded weak solutions}\label{sec.unique}

We prove the uniqueness of bounded weak solutions. According to Theorem \ref{thm.bound}, such solutions exist in one space dimension. Recalling definition \eqref{1.H2rel} of the relative nonlocal Rao entropy and setting $v_i=-L_\eps(\na p_i(u))$, $\bar{v}_i=-L_\eps(\na p_i(\bar{u}))$ for two bounded weak solutions $u_i$ and $\bar{u}_i$, we compute
\begin{align}\label{3.dH2dt}
  \frac12\frac{d}{dt}H_2(u|\bar{u})
  &= \sum_{i,j=1}^n\int_\Omega a_{ij}K_\eps(u_i-\bar{u}_i)
  \pa_t K_\eps(u_j-\bar{u}_j)dx \\
  &= \sum_{i,j=1}^n a_{ij}
  \langle\pa_t(u_j-\bar{u}_j), L_\eps(u_i-\bar{u}_i)\rangle 
  \nonumber \\
  &= I_1+I_2+I_3, \nonumber 
\end{align}
where 
\begin{align*}
  I_1 &= -\sigma\sum_{i,j=1}^n a_{ij}\int_\Omega \na(u_i-\bar{u}_i)
  \cdot\na L_\eps(u_j-\bar{u}_j)dx, \\
  I_2 &= \sum_{i,j=1}^n a_{ij}\int_\Omega(u_iv_i-\bar{u}_i\bar{v}_i)
  \cdot\na L_\eps(u_j-\bar{u}_j)dx, \\
  I_3 &= \sum_{i,j=1}^n a_{ij}\int_\Omega
  (u_if_i(u)-\bar{u}_if_i(\bar{u}))L_\eps(u_j-\bar{u}_j)dx.
\end{align*}
The first and last terms are estimated according to
\begin{align*}
  I_1 &= -\sigma\sum_{i,j=1}^n a_{ij}\int_\Omega 
  K_\eps(\na(u_i-\bar{u}_i))\cdot K_\eps(\na(u_j-\bar{u}_j))dx
  \le -\alpha\sigma\|K_\eps(\na(u-\bar{u}))\|_{L^2(\Omega)}^2, \\
  I_3 &= \sum_{i,j=1}^n a_{ij}b_{i0}\int_\Omega
  (u_i-\bar{u}_i)L_\eps(u_j-\bar{u}_j)dx
  - \sum_{i,j,k=1}^n a_{ij}b_{ik}\int_\Omega(u_iu_k-\bar{u}_i\bar{u}_k)
  L_\eps(u_j-\bar{u}_j)dx \\
  &\le C\|K_\eps(u-\bar{u})\|_{L^2(\Omega)}^2
  - \sum_{i,j,k=1}^n a_{ij}b_{ik}\int_\Omega
  \big(u_i(u_k-\bar{u}_k) + \bar{u}_k(u_i-\bar{u}_i)\big)
  L_\eps(u_j-\bar{u}_j)dx \\
  &\le  C\|K_\eps(u-\bar{u})\|_{L^2(\Omega)}^2
  + C\max\{\|u\|_{L^\infty(\Omega)},\|\bar{u}\|_{L^\infty(\Omega)}\}
  \|u-\bar{u}\|_{L^2(\Omega)}\|L_\eps(u-\bar{u})\|_{L^2(\Omega)},
\end{align*}
where we used the notation $\|g\|_{L^2(\Omega)}=\max_{i=1,\ldots,n}\|g_i\|_{L^2(\Omega)}$ for functions $g=(g_1,\ldots,g_n)\in L^2(\Omega;\R^n)$. Next, we use  $\|L_\eps(u-\bar{u})\|_{L^2(\Omega)}\le\|K_\eps(u-\bar{u})\|_{L^2(\Omega)}$ (see \eqref{2.estKL}) and (for $w \in L^2(\Omega)$ with $L_\eps(w)=v$)
\begin{align}\label{3.estKL}
  \|w\|_{L^2(\Omega)}^2 &= \langle w,w\rangle
  = \langle -\eps\Delta L_\eps(w)+L_\eps(w),w\rangle \\
  &= \eps\langle\na L_\eps(w),\na w\rangle + \langle L_\eps(w),w\rangle
  = \eps\|K_\eps(\na w)\|_{L^2(\Omega)}^2 + \|K_\eps(w)\|_{L^2(\Omega)}^2.
  \nonumber
\end{align}
Thus, we infer that
\begin{align*}
  \|u-&\bar{u}\|_{L^2(\Omega)}\|L_\eps(u-\bar{u})\|_{L^2(\Omega)} \\
  &\le \big(\eps\|K_\eps(\na(u-\bar{u}))\|_{L^2(\Omega)}^2
  +\|K_\eps(u-\bar{u})\|_{L^2(\Omega)}^2\big)^{1/2}
  \|K_\eps(u-\bar{u})\|_{L^2(\Omega)}\\
  &\le \frac{\alpha\sigma}{4}\|K_\eps(\na(u-\bar{u}))\|_{L^2(\Omega)}^2
  + C(\eps,\sigma)\|K_\eps(u-\bar{u})\|_{L^2(\Omega)}^2,
\end{align*}
and the first term on the right-hand side can be absorbed by $I_1$. 

For the term $I_2$, we have
\begin{align*}
  I_2 &= -\sum_{i=1}^n\int_\Omega (u_iv_i-\bar{u}_i\bar{v}_i)
  \cdot(v_i-\bar{v}_i)dx \\
  &= -\sum_{i=1}^n\int_\Omega u_i|v_i-\bar{v}_i|^2
  - \sum_{i=1}^n\int_\Omega(u_i-\bar{u}_i)\bar{v}_i
  \cdot(v_i-\bar{v}_i)dx \\
  &\le \|u-\bar{u}\|_{L^2(\Omega)}
  \|\bar{v}\|_{L^\infty(\Omega)}\|v-\bar{v}\|_{L^2(\Omega)}.
\end{align*}
We deduce from equality~\eqref{2.estKL} and the linearity of $p$ that
\begin{align*}
  \|v-\bar{v}\|_{L^2(\Omega)} 
  &= \|L_\eps(\nabla p_i(u)-\nabla p_i(\bar{u}))\|_{L^2(\Omega)}\\
  & \leq \eps^{-1/2}  \|K_\eps (p_i(u)-p_i(\bar{u}))\|_{L^2(\Omega)} 
  \leq C\eps^{-1/2} \|K_\eps(u-\bar{u})\|_{L^2(\Omega)}.
\end{align*}
Now, we use estimates \eqref{2.estKL} and \eqref{3.estKL} as well as Young's inequality:
\begin{align*}
  I_2&\le C\big(\eps\|K_\eps(\na(u-\bar{u}))\|_{L^2(\Omega)}^2
  + \|K_\eps(u-\bar{u})\|_{L^2(\Omega)}^2\big)^{1/2}
  \eps^{-1/2}\|K_\eps(u-\bar{u})\|_{L^2(\Omega)} \\
  &\le \frac{\alpha\sigma}{4}\|K_\eps(\na(u-\bar{u}))\|_{L^2(\Omega)}^2
  + C(\eps,\sigma)\|K_\eps(u-\bar{u})\|_{L^2(\Omega)}^2,
\end{align*}
where we have used the fact that by assumption $L_\eps : W^{-1,1} \to L^\infty(\Omega)$ and that $\bar{v}_i = -L_\eps(\na p_i(\bar{u}))$ for $i=1,\ldots,n$. Inserting the estimates for $I_1$, $I_2$, and $I_3$ into \eqref{3.dH2dt}, we infer from $\|K_\eps(u-\bar{u})\|_{L^2(\Omega)}^2\le\alpha^{-1} H_2(u|\bar{u})$ that
\begin{align*}
  \frac{dH_2}{dt}(u|\bar{u}) 
  + \frac{\alpha\sigma}{2}\|K_\eps(\na(u-\bar{u}))\|_{L^2(\Omega)}^2 
  \le C(\alpha,\eps,\sigma)H_2(u|\bar{u}).
\end{align*}
Since $H_2(u(0)|\bar{u}(0))=0$, Gronwall's inequality shows that $H_2(u(t)|\bar{u}(t))=0$ and hence $K_\eps(u(t)-\bar{u}(t))=0$ for $t>0$. This implies that $L_\eps(u(t)-\bar{u}(t))=0$ and, by definition of $L_\eps$, $u(t)=\bar{u}(t)$ for $t>0$, concluding the proof.


\section{Asymptotic regimes}\label{sec.asympt}

In this section we study the behavior of the weak solutions to~\eqref{1.eq1}--\eqref{1.p} when $\eps \to 0$ (Theorem~\ref{thm.lim}) as well as when $T \to \infty$ (Theorem~\ref{thm.large}).

\subsection{The localization limit $\eps \to 0$}

We prove Theorem \ref{thm.lim}. The bounds provided by the entropy inequality \eqref{1.ei} can be used to perform the limit $\eps\to 0$. Indeed, let $u^\eps$ be a weak solution to \eqref{1.eq1}--\eqref{1.p} satisfying \eqref{1.ei} and set $v_i^\eps=L_\eps(\na p_i(u^\eps))$ for $i=1,\ldots,n$ which is bounded in $L^2(\Omega_T)$. We have, similarly as in Section \ref{sec.eta}, up to a subsequence, as $\eps\to 0$,
\begin{align*}
  \na u_i^\eps\rightharpoonup \na u_i 
  &\quad\mbox{weakly in }L^{4/3}(\Omega_T), \\
  u_i^\eps \to u_i &\quad\mbox{strongly in }L^2(\Omega_T)
  \mbox{ and a.e.}, \\
  \pa_t u_i^\eps\rightharpoonup \pa_t u_i &\quad\mbox{weakly in }
  \mathcal{M}([0,T]; H^{m'}(\Omega)'), \\
  v_i^\eps\rightharpoonup w_i &\quad\mbox{weakly in }L^2(\Omega_T),
  \ i=1,\ldots,n,
\end{align*}
for some functions $w_i\in L^2(\Omega_T)$. We want to identify $w_i=-\na p_i(u)$. If $\diver L_\eps(\phi)\rightharpoonup \diver\phi$ weakly in $L^2(\Omega_T)$ for test functions $\phi$, we have
\begin{align*}
  -\langle L_\eps(\na p_i(u^\eps)),\phi\rangle
  = \langle p_i(u^\eps),\diver L_\eps(\phi)\rangle
  \to \langle p_i(u),\diver\phi\rangle = -\langle\na p_i(u),\phi\rangle,
\end{align*}
which implies that $w_i=-\na p_i(u)$. The claimed convergence holds as shown in the following lemma.
\begin{lemma}
Let $\phi\in L^2(\Omega_T)$. Then $\diver L_\eps(\phi)\rightharpoonup \diver\phi$ weakly in $L^2(\Omega_T)$.
\end{lemma}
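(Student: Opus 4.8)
The plan is to reduce the vector‑valued statement to the corresponding scalar one by commuting $L_\eps$ with the divergence, and then to repeat the soft functional‑analytic argument already used for the deregularization limit $\eta\to 0$, now with the parameter $\eps\to 0$. Writing $\phi=(\phi_1,\dots,\phi_d)$ and recalling that $L_\eps$ acts componentwise, the commutation relation \eqref{2.commute} gives $\pa_k L_\eps(\phi_k)=L_\eps(\pa_k\phi_k)$ for each $k$, hence
\begin{align*}
  \diver L_\eps(\phi) = \sum_{k=1}^d\pa_k L_\eps(\phi_k)
  = L_\eps\Big(\sum_{k=1}^d\pa_k\phi_k\Big) = L_\eps(\diver\phi)
  \qquad\text{in }L^2(\Omega_T).
\end{align*}
In particular $\diver\phi\in L^2(\Omega_T)$ (this is implicit in the statement, since the claimed weak limit is asserted to be an $L^2$ function, and it holds for the test functions to which the lemma is applied in the proof of Theorem~\ref{thm.lim}), so it suffices to prove: for every scalar $g\in L^2(\Omega_T)$ one has $L_\eps(g)\rightharpoonup g$ weakly in $L^2(\Omega_T)$.

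Next I would record the energy estimate. Testing the weak formulation of $-\eps\Delta L_\eps(g)+L_\eps(g)=g$ with $L_\eps(g)$ itself and integrating over $(0,T)$ gives
\begin{align*}
  \|L_\eps(g)\|_{L^2(\Omega_T)}^2 + \eps\|\na L_\eps(g)\|_{L^2(\Omega_T)}^2
  = \int_0^T\langle g,L_\eps(g)\rangle\,dt
  \le \|g\|_{L^2(\Omega_T)}\,\|L_\eps(g)\|_{L^2(\Omega_T)},
\end{align*}
so that $\|L_\eps(g)\|_{L^2(\Omega_T)}\le\|g\|_{L^2(\Omega_T)}$ uniformly in $\eps$ (this is just the $L^2$‑contraction already noted after Lemma~\ref{lem.Leps}), and moreover $\|\eps\na L_\eps(g)\|_{L^2(\Omega_T)}^2=\eps\big(\eps\|\na L_\eps(g)\|_{L^2(\Omega_T)}^2\big)\le\eps\|g\|_{L^2(\Omega_T)}^2\to 0$. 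Hence, up to a subsequence, $L_\eps(g)\rightharpoonup\bar g$ weakly in $L^2(\Omega_T)$ for some $\bar g$, and $\eps\na L_\eps(g)\to 0$ strongly in $L^2(\Omega_T)$.

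Then I would identify the limit exactly as in the preceding lemma. Passing to the limit $\eps\to 0$ in
\begin{align*}
  \int_0^T\!\!\int_\Omega\big(\eps\na L_\eps(g)\cdot\na\psi + L_\eps(g)\,\psi\big)\,dx\,dt
  = \int_0^T\!\!\int_\Omega g\,\psi\,dx\,dt, \qquad \psi\in L^2(0,T;H^1_0(\Omega)),
\end{align*}
where the first term vanishes in the limit because $\eps\na L_\eps(g)\to 0$ in $L^2(\Omega_T)$, yields $\int_0^T\!\int_\Omega\bar g\,\psi\,dx\,dt=\int_0^T\!\int_\Omega g\,\psi\,dx\,dt$ for all such $\psi$, i.e.\ $\bar g=g$ a.e.\ in $\Omega_T$. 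Since the limit is independent of the extracted subsequence, the whole family converges, $L_\eps(g)\rightharpoonup g$ weakly in $L^2(\Omega_T)$; combined with the commutation identity this gives $\diver L_\eps(\phi)=L_\eps(\diver\phi)\rightharpoonup\diver\phi$ weakly in $L^2(\Omega_T)$, as claimed.

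The one genuinely delicate point is the very first step: one needs a bound on $\diver L_\eps(\phi)$ in $L^2(\Omega_T)$ that is uniform in $\eps$, and not merely in $L^2(0,T;H^1(\Omega)')$ (which would follow for free from $L_\eps(\phi)\rightharpoonup\phi$). This is precisely what the commutation \eqref{2.commute} buys us: it turns $\diver L_\eps(\phi)$ into $L_\eps$ applied to the $L^2$ datum $\diver\phi$, for which the $L^2$‑contraction yields the required uniform bound. Everything after that is the same weak‑compactness / limit‑identification / uniqueness argument already carried out for the limit $\eta\to 0$.
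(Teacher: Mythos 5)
Your proof is correct, and it is in fact \emph{more careful} than the one printed in the paper. The paper's argument takes $y_\eps=L_\eps(\phi)$, derives the uniform bounds on $(y_\eps)$ and $(\sqrt{\eps}\,\na y_\eps)$, passes to the limit in the weak formulation, and concludes $L_\eps(\phi)\rightharpoonup\phi$ weakly in $L^2(\Omega_T)$. As stated, that is the full extent of what the paper's proof establishes; by itself it yields $\diver L_\eps(\phi)\rightharpoonup\diver\phi$ only in the sense of distributions (or weakly in $L^2(0,T;H^{-1}(\Omega))$), which is \emph{not} the weak $L^2(\Omega_T)$ convergence the lemma asserts and the subsequent identification $\langle p_i(u^\eps),\diver L_\eps(\phi)\rangle\to\langle p_i(u),\diver\phi\rangle$ requires, since $p_i(u^\eps)\to p_i(u)$ only in $L^2$. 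Your first step — using the commutation \eqref{2.commute} componentwise to write $\diver L_\eps(\phi)=L_\eps(\diver\phi)$, which both produces the uniform $L^2$ bound on $\diver L_\eps(\phi)$ and reduces the problem to the scalar statement $L_\eps(g)\rightharpoonup g$ for $g=\diver\phi\in L^2(\Omega_T)$ — is precisely the missing link that makes the lemma's conclusion, as stated, actually follow. After that reduction, your energy estimate, extraction of a weakly convergent subsequence, passage to the limit in the weak formulation, and identification of the limit by uniqueness coincide with the paper's argument. You also correctly flag that the hypothesis as written is loosely stated (for a general vector-valued $\phi\in L^2(\Omega_T)$, $\diver\phi$ need not be in $L^2$), and note that the implicit additional regularity is available in the one place the lemma is used, namely for smooth test functions. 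The only cosmetic discrepancy is the test space ($H^1_0$ versus the paper's $H^1$), which is immaterial here and, given the Dirichlet boundary condition built into $L_\eps$, your choice is the natural one.
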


\begin{proof}
We infer from the weak formulation of $y_\eps=L_\eps(\phi)$,
$$
  \int_\Omega(\eps\na y_\eps\cdot\na\psi + y_\eps\psi)dx
  = \langle\phi,\psi\rangle \quad\mbox{for }\psi\in L^2(0,T;H^1(\Omega)),
$$
that $(\sqrt{\eps}\na y_\eps)$ and $(y_\eps)$ are bounded in $L^2(\Omega_T)$ (choose $\psi=y_\eps$ and use Young's inequality). Hence, for a subsequence, $\eps\na y_\eps\to 0$ strongly in $L^2(\Omega_T)$ and $y_\eps\rightharpoonup y$ weakly in $L^2(\Omega_T)$ as $\eps\to 0$ for some $y\in L^2(\Omega_T)$; and the limit $\eps\to 0$ in the previous weak formulation gives
$$
  \int_\Omega y\psi dx = \langle\phi,\psi\rangle.
$$
It follows that $y=\phi$. This proves that $L_\eps(\phi)=y_\eps\rightharpoonup \phi$ weakly in $L^2(\Omega_T)$ for a subsequence, and, because of the uniqueness of the limit, also for the whole sequence.
\end{proof} 

We have shown that $v_i^\eps\rightharpoonup -\na p_i(u)$ weakly in $L^2(\Omega_T)$ and consequently 
\begin{align*}
  u_i^\eps v_i^\eps\rightharpoonup -u_i\na p_i(u) 
  &\quad\mbox{weakly in }L^1(\Omega_T), \\
  \eps\Delta v_i^\eps \to 0 &\quad\mbox{in the sense of distributions}.
\end{align*}
These convergences allow us to perform the limit $\eps\to 0$ in equations \eqref{1.eq1}--\eqref{1.eq2}, proving that $u_i$ solves \eqref{1.limeq} with initial and boundary conditions \eqref{1.bic}. This concludes the proof of Theorem~\ref{thm.lim}.


\subsection{Large-time behavior}\label{sec.large}

Next, we prove Theorem~\ref{thm.large}. For this, we compute the time derivative of the relative entropy $H_1$, defined in \eqref{1.H1rel}, using the definition $p_i(u)=\sum_{j=1}^n a_{ij}u_j$ and the fact that $u_i^\infty$ is constant:
\begin{align}\label{6.dH1dt}
  \frac{dH_1}{dt}&(u|u^\infty) = \sum_{i=1}^n\bigg\langle\pa_t u_i,
  \log\frac{u_i}{u_i^\infty}\bigg\rangle \\
  &= \sum_{i=1}^n\int_\Omega\bigg(-\big(\sigma\na u_i+u_iL_\eps(\na p_i(u))\big)\cdot\na\log\frac{u_i}{u_i^\infty} 
  + u_if_i(u)\log\frac{u_i}{u_i^\infty}\bigg)dx \nonumber \\
  &= -4\sigma\sum_{i=1}^n\int_\Omega|\na\sqrt{u_i}|^2 dx
  - \sum_{i=1}^n L_\eps(\na p_i(u))\cdot\na u_i
  + u_if_i(u)\log\frac{u_i}{u_i^\infty}\bigg)dx \nonumber \\
  &= -4\sigma\sum_{i=1}^n\int_\Omega|\na\sqrt{u_i}|^2 dx
  - \sum_{i,j=1}^n \int_\Omega a_{ij}K_\eps(\na u_i)
  \cdot K_\eps(\na u_j)dx \nonumber \\
  &\phantom{xx}
  + \sum_{i=1}^n\int_\Omega u_if_i(u)\log\frac{u_i}{u_i^\infty}dx.
  \nonumber 
\end{align}
The first two terms on the right-hand side are nonpositive, while we rewrite the last term by using $f_i(u^\infty)=b_{i0}-\sum_{j=1}^n b_{ij}u_j^\infty=0$, which follows from the definition $B u^\infty=b$:
\begin{align*}
  \sum_{i=1}^n\int_\Omega u_if_i(u)\log\frac{u_i}{u_i^\infty}dx
  &= \sum_{i=1}^n\int_\Omega\bigg(u_i\log\frac{u_i}{u_i^\infty}
  - (u_i-u_i^\infty)\bigg)f_i(u) dx \\
  &\phantom{xx}+ \sum_{i=1}^n\int_\Omega(u_i-u_i^\infty)
  (f_i(u)-f_i(u^\infty))dx.
\end{align*}
The first integral is nonpositive, since $y\log(y/z)-(y-z)\ge 0$ for all $y\ge 0$ and $z>0$ and since $f_i(u)\le 0$ by assumption. Then, by the positive definiteness of $(b_{ij})$ with smallest eigenvalue $\beta>0$,
\begin{align*}
  \sum_{i=1}^n\int_\Omega u_if_i(u)\log\frac{u_i}{u_i^\infty}dx
  &\le -\sum_{i,j=1}^n\int_\Omega b_{ij}(u_i-u_i^\infty)
  (u_j-u_j^\infty)dx 
  \le -\beta\|u-u^\infty\|_{L^2(\Omega)}^2.
\end{align*}
We infer from a Taylor expansion, applied to the convex function $x \mapsto x\log(x/u^\infty_i)$, that
\begin{align*}
  u_i\log\frac{u_i}{u_i^\infty} - (u_i-u_i^\infty) \leq \frac{(u_i-u^\infty_i)^2}{2\min\{u_i,u_i^\infty\}},
\end{align*}
yielding
\begin{align*}
  \sum_{i=1}^n\int_\Omega u_if_i(u)\log\frac{u_i}{u_i^\infty}dx
  &\le -2\beta\sum_{i=1}^n\int_\Omega\min\{u_i,u_i^\infty\}
  \bigg(u_i\log\frac{u_i}{u_i^\infty}-(u_i-u_i^\infty)\bigg)dx \\
  &\le -2\beta\mu H_1(u|u^\infty),
\end{align*}
recalling that $\min\{u_i,u_i^\infty\}\ge\mu>0$ by assumption. We conclude from \eqref{6.dH1dt} that 
$$
  \frac{dH_1}{dt}(u|u^\infty)\le -2\beta\mu H_1(u|u^\infty), \quad t>0,
$$
and Gronwall's inequality ends the proof of Theorem~\ref{thm.large}.


\begin{appendix}
\section{Auxiliary results}\label{sec.app}

\begin{theorem}[Variant of Helly's selection theorem]\label{thm.helly}
Let $H$ be a separable Hilbert space and let $(w_n)_{n\in\N}\subset W^{1,1}(0,T;H)$ be a sequence such that it holds for some $C>0$ that $\|w_n\|_{W^{1,1}(0,T;H)}\le C$ for all $n\in\N$. Then there exists a subsequence of $(w_n)$ (not relabeled) and a function $w\in BV([0,T];H)$ such that for all $t\in[0,T]$,
\begin{align*}
  w_n(t)\rightharpoonup w(t) &\quad\mbox{weakly in }H.
\end{align*}
Additionally, up to a subsequence, $\pa_t w_n \rightharpoonup \pa_t w$ weakly as vector-valued measures, i.e., for all $\phi\in C^0([0,T])$, it holds that\footnote{Note that since $[0,T]$ is compact, the space of continuous functions on $[0,T]$ coincides both with the spaces of continuous functions with compact support and of continuous functions which vanish at infinity; see the Riesz--Markov--Kakutani representation theorem.}
\begin{align*}
  \int_0^T \phi d w_n \rightharpoonup \int_0^T \phi dw
  \quad\mbox{weakly in }H.
\end{align*}
\end{theorem}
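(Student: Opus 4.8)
The plan is to prove this variant of Helly's selection theorem in three stages: first obtain the pointwise weak limit by a diagonal argument combined with a BV compactness estimate, then verify that the limit lies in $BV([0,T];H)$, and finally upgrade the convergence of the derivatives to weak-* convergence of the associated vector measures. Throughout, I would use that $H$ is separable, so bounded sets in $H$ are metrizable in the weak topology, and that $W^{1,1}(0,T;H)\hookrightarrow C^0([0,T];H)$, so each $w_n$ has a continuous (hence everywhere-defined) representative with $\|w_n(t)\|_H\le C$ uniformly in $t$ and $n$; moreover $t\mapsto w_n(t)$ has pointwise variation bounded by $\|\pa_t w_n\|_{L^1(0,T;H)}\le C$.

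First I would construct the limit function. Fix a countable dense set $\{t_k\}_{k\in\N}\subset[0,T]$ (including $0$ and $T$). Since $(w_n(t_1))_n$ is bounded in $H$, which is reflexive, extract a weakly convergent subsequence; iterate over $k$ and take the diagonal subsequence so that $w_n(t_k)\rightharpoonup:w(t_k)$ weakly in $H$ for every $k$. The uniform bound on the pointwise variation of $w_n$ passes to the limit in the sense that, for any finite collection $t_k^{(1)}<\cdots<t_k^{(m)}$ of these dense points, $\sum_j\|w(t_k^{(j+1)})-w(t_k^{(j)})\|_H\le\liminf_n\sum_j\|w_n(t_k^{(j+1)})-w_n(t_k^{(j)})\|_H\le C$, using weak lower semicontinuity of the norm. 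Hence $w$ restricted to $\{t_k\}$ has bounded variation, so it has one-sided limits everywhere and extends to a genuine $BV([0,T];H)$ function (defined, say, by right limits); one then checks, using the uniform variation bound on the $w_n$ and density of $\{t_k\}$, that in fact $w_n(t)\rightharpoonup w(t)$ at \emph{every} $t\in[0,T]$, not merely at the dense points — for a given $t$ and test vector $h\in H$, the oscillation of $\langle w_n(\cdot),h\rangle$ near $t$ is controlled by $\|h\|_H$ times the variation of $w_n$ on a small interval, which can be made small uniformly in $n$ after choosing a nearby $t_k$.

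The second stage, that $w\in BV([0,T];H)$, is essentially already obtained in the construction above: the pointwise variation of $w$ over all of $[0,T]$ is bounded by $C$ by the same weak-lower-semicontinuity argument applied to arbitrary partitions, now that $w(t)$ is defined everywhere. For the third stage, the derivatives $\pa_t w_n$, viewed as $H$-valued measures $\mu_n:=\pa_t w_n\,dt$ on $[0,T]$, satisfy $\|\mu_n\|_{\mathcal M([0,T];H)}=\|\pa_t w_n\|_{L^1(0,T;H)}\le C$. By the Riesz--Markov--Kakutani representation theorem, $\mathcal M([0,T];H)$ is (isometric to) the dual of $C^0([0,T];H)$ when $H$ has the Radon--Nikod\'ym property — which holds since $H$ is reflexive — so bounded sequences have weak-* convergent subsequences (sequential Banach--Alaoglu, valid because $C^0([0,T];H)$ is separable, using separability of $H$). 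Extract $\mu_n\rightharpoonup^*\mu$ for some $\mu\in\mathcal M([0,T];H)$. It remains to identify $\mu=\pa_t w$: for scalar $\phi\in C^1([0,T])$, integration by parts gives $\int_0^T\phi\,d\mu_n=\phi(T)w_n(T)-\phi(0)w_n(0)-\int_0^T\phi'(t)w_n(t)\,dt$, and every term on the right converges (the first two by the pointwise weak convergence at $T$ and $0$, the integral by dominated convergence since $w_n(t)\rightharpoonup w(t)$ pointwise with a uniform bound, integrating the scalar functions $\langle w_n(t),h\rangle$), yielding $\int_0^T\phi\,d\mu=\phi(T)w(T)-\phi(0)w(0)-\int_0^T\phi'(t)w(t)\,dt$, which is precisely the statement that $\mu$ is the distributional derivative of $w$. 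Since the limit $\mu=\pa_t w$ is independent of the subsequence, the whole extracted sequence converges.

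The main obstacle, and the point deserving the most care, is the passage from weak convergence at the countable dense set to weak convergence at \emph{every} $t\in[0,T]$, together with the correct handling of jump points of the limit BV function: one must fix a consistent choice of representative (e.g. right-continuous) for $w$ and verify that $w_n(t)\rightharpoonup w(t)$ still holds there despite possible jumps, which relies essentially on the equi-boundedness of the pointwise variations $\operatorname{Var}(w_n;[0,T])\le C$ — this is what prevents the $w_n$ from oscillating wildly near a jump and forces $w_n(t)$ to track $w(t^+)$ (or $w(t^-)$) uniformly. The measure-theoretic weak-* compactness in stage three is standard once the right functional-analytic facts (RNP of $H$, separability of $C^0([0,T];H)$) are invoked, and the identification of the limit is a routine integration-by-parts argument.
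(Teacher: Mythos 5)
Your proposal attempts a from-scratch proof of a result that the paper disposes of by citation: the authors merely verify the two hypotheses (uniform bound on $\|w_n(t)\|_H$ via the embedding $W^{1,1}(0,T;H)\hookrightarrow C^0([0,T];H)$, and uniform bound on the pointwise variation via $\operatorname{Var}(w_n;[0,T])\le\int_0^T\|\pa_t w_n\|_H\,d\tau$) and then invoke Helly's selection theorem for Hilbert-space-valued functions from Barbu--Precupanu. Your stages on the variation bound of the limit and on the identification of the limiting measure by integration by parts are sound, and the latter is a reasonable way to obtain the second assertion of the theorem.

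However, the step you yourself flag as the main obstacle contains a genuine gap. You claim that the oscillation of $w_n$ near a point $t$ is controlled by ``the variation of $w_n$ on a small interval, which can be made small uniformly in $n$ after choosing a nearby $t_k$.'' This is false: the variation of $w_n$ on $[t_k,t]$ is $\int_{t_k}^{t}\|\pa_t w_n\|_H\,d\tau$, and a uniform $L^1$ bound on $\pa_t w_n$ does not give equi-integrability, so this quantity need not be small uniformly in $n$ for $t_k$ close to $t$. Concretely, take $w_n(s)=\max\{0,\min\{1,n(s-t_*)\}\}\,h$ for a fixed unit vector $h$ and some $t_*\in(0,T)$: the $W^{1,1}$ norms are uniformly bounded, $w_n(t_k)\to 0$ for every dense point $t_k<t_*$ and $w_n(t_k)\to h$ for every $t_k>t_*$, yet $w_n(t_*)=0$ for all $n$, so the right-limit representative you prescribe (which would set $w(t_*)=h$) makes the pointwise convergence fail at $t_*$. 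The value of $w$ at a jump point is dictated by the sequence, not by a continuity convention, and in general $w_n(t_*)$ can oscillate between the one-sided limits along the original subsequence. The standard repair -- and the content of the cited theorem -- is to first apply scalar Helly to the monotone variation functions $V_n(t)=\operatorname{Var}(w_n;[0,t])$ to get $V_n\to V$ pointwise along a subsequence, deduce convergence of $w_n(t)$ at every continuity point of $V$ from $\limsup_n\|w_n(t)-w_n(t_k)\|_H\le V(t)-V(t_k)$, and then perform a \emph{second} diagonal extraction over the at most countable set of discontinuity points of $V$. Without this additional extraction your argument does not yield convergence at every $t\in[0,T]$.
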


\begin{proof}
The proof follows from Helly's selection theorem for Hilbert space-valued functions \cite[Theorem 1.126]{BaPr12} if $(w_n)\subset BV([0,T];H)$ has the properties
\begin{align*}
  \mbox{(i)}&\quad \|w_n(t)\|_H\le C\mbox{ uniformly in }t\in[0,T]
  \mbox{ and }n\in\N, \\
  \mbox{(ii)}&\quad\operatorname{Var}(w_n;[0,T]) := \sup_{\mathcal{P}}
  \sum_{i=0}^{N-1}\|w_n(t_{i+1})-w_n(t_i)\|_H \le C
  \mbox{ uniformly in }n\in\N,
\end{align*}
where $\mathcal{P}$ is the set of partitions $0=t_0\le t_1\le\cdots\le t_N=T$. Indeed, we conclude from $w_n\in W^{1,1}(0,T;H)$ and \cite[Sec.~5.9.2, Theorem 2]{Eva10} that $w_n\in C^0([0,T];H)$ (possibly after redefining $w_n$ on a set of measure zero) with continuous embedding. This proves (i). Property (ii) is a consequence of 
\begin{align*}
  \operatorname{Var}(w_n;[0,T]) &= \sup_{\mathcal{P}}\sum_{i=0}^{N-1}
  \bigg\|\int_{t_i}^{t_{i+1}}\pa_t w_n(\tau)d\tau\bigg\|_H
  \le \sup_{\mathcal{P}}\sum_{i=0}^{N-1}\int_{t_i}^{t_{i+1}}
  \|\pa_t w_n(\tau)\|_H d\tau \\
  &= \int_0^T\|\pa_t w_n(\tau)\|_H d\tau
  \le \|w_n\|_{W^{1,1}(0,T;H)}\le C.
\end{align*}
This finishes the proof.
\end{proof}

It is well known that elliptic problems with $W^{-1,p}(\Omega)$ source have a unique solution in $W^{1,p}(\Omega)$ with $p>1$ \cite[Lemma 3.5]{BrVa07}. In one space dimension, this result can be extended to $p=1$. Since we have not found a proof in the literature, we present it here.

\begin{lemma}[Elliptic problem with $W^{-1,1}(\Omega)$ source]\label{lem.ell}
Let $\Omega=(-1,1)$ and $u\in L^1(\Omega)$. Then the elliptic problem
\begin{align*}
  -\eps v''+v=u'\quad\mbox{in }\Omega, \quad v(-1)=v(1)=0,
\end{align*}
has a unique distributional solution satisfying $v\in W^{1,1}_0(-1,1)\hookrightarrow C^0([-1,1])\hookrightarrow L^\infty(-1,1)$.
\end{lemma}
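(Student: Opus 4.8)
The plan is to solve the ODE explicitly using the variation-of-constants formula for the operator $-\eps\,\pa_x^2 + 1$ on $(-1,1)$, treating the right-hand side $u'$ as a distribution. First I would recall that the Green's function $G(x,y)$ for $-\eps w'' + w = \delta_y$ with homogeneous Dirichlet data is explicit in terms of $\sinh$: writing $\omega = 1/\sqrt{\eps}$, one has, up to the usual normalization,
\begin{align*}
  G(x,y) = \frac{1}{\sqrt{\eps}\,\sinh(2\omega)}
  \begin{cases}
    \sinh(\omega(1-y))\sinh(\omega(1+x)), & x \le y, \\
    \sinh(\omega(1-x))\sinh(\omega(1+y)), & x \ge y,
  \end{cases}
\end{align*}
which is continuous on $[-1,1]^2$, vanishes for $x\in\{-1,1\}$, and is Lipschitz in $y$ uniformly in $x$ (with a jump of size $1/\eps$ in $\pa_x G$ across the diagonal, but no jump in $\pa_y G$). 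Then the candidate solution to $-\eps v'' + v = u'$ is obtained by pairing the distribution $u'$ against $G(x,\cdot)$ and integrating by parts: $v(x) = -\int_{-1}^1 \pa_y G(x,y)\, u(y)\, dy$, where the boundary terms vanish because $G(x,\pm 1)=0$.

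The key steps, in order, would be: (1) define $v$ by the formula $v(x) = -\int_{-1}^1 \pa_y G(x,y)\, u(y)\, dy$ and check it makes sense for $u\in L^1$, since $\pa_y G$ is bounded on $[-1,1]^2\setminus\{x=y\}$ (indeed bounded everywhere as a function, being piecewise smooth with a jump in $\pa_x$ not $\pa_y$), giving immediately $\|v\|_{L^\infty(-1,1)} \le C(\eps)\|u\|_{L^1(-1,1)}$; (2) verify $v\in W^{1,1}_0(-1,1)$ by differentiating under the integral sign — here one must be careful across the diagonal, but $v'(x)$ picks up the jump of $\pa_x\pa_y G$ together with an absolutely integrable remainder, and the Dirichlet condition $v(\pm 1)=0$ follows from $G(\pm 1, y)=0$; (3) check that $v$ is a distributional solution: for $\phi\in C_c^\infty(-1,1)$, compute $\langle -\eps v'' + v, \phi\rangle = \langle v, -\eps\phi'' + \phi\rangle$, substitute the integral formula, use Fubini, and use that $x\mapsto G(x,y)$ solves $-\eps\pa_x^2 G + G = \delta_y$ to collapse the inner integral, leaving $-\int u(y)\phi'(y)\,dy = \langle u', \phi\rangle$; (4) uniqueness: if $v_1, v_2$ are two distributional solutions in $W^{1,1}_0$, their difference $w$ satisfies $-\eps w'' + w = 0$ distributionally with $w\in W^{1,1}_0$, so by elliptic regularity $w\in C^\infty$ and $w\equiv 0$ since the homogeneous problem has only the trivial solution. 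The embeddings $W^{1,1}_0(-1,1)\hookrightarrow C^0([-1,1])\hookrightarrow L^\infty(-1,1)$ are the standard one-dimensional Sobolev embeddings (absolute continuity of $W^{1,1}$ functions in one variable).

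The main obstacle I expect is step (2) and the bookkeeping of the diagonal singularity: although $G$ itself is continuous, its derivatives have jump discontinuities along $\{x=y\}$, so one must justify carefully that $v' \in L^1$ and that no extra Dirac mass appears when passing from $v$ to $v'$ to $v''$ in the distributional identity. A clean way to sidestep part of this is to integrate by parts the \emph{other} way: note $-\int_{-1}^1 \pa_y G(x,y) u(y)\, dy$ equals, after splitting at $y=x$ and integrating by parts in each piece with $U(y) := \int_{-1}^y u(s)\, ds$, an expression involving $G(x,x)[U(x^-)-U(x^+)] = 0$ (by continuity of $U$) plus $\int_{-1}^1 \pa_y^2 G(x,y)\, U(y)\, dy$ type terms — but $\pa_y^2 G = (1/\eps)(G - \delta_x)$ away from the diagonal, and the Dirac contributes a genuine term, recovering precisely the Brinkman-type smoothing. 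Alternatively, and perhaps most economically, one simply approximates $u$ by smooth $u_k \to u$ in $L^1$, solves the classical problem for each $u_k$ to get $v_k = L_\eps(\na u_k)$, and uses the a priori bound $\|v_k\|_{W^{1,1}_0} + \|v_k\|_{L^\infty} \le C(\eps)\|u_k\|_{L^1}$ (which follows from the Green's function representation applied to smooth data) together with stability of the distributional equation under these limits to pass to the limit; the bound survives, and the limit is the desired $v$. This reduces the technical burden to establishing the a priori estimate for smooth data, which is the cleanest instance of the Green's function computation.
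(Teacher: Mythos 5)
Your proposal follows essentially the same route as the paper: the Green's function representation $v(x)=-\int_{-1}^1\pa_y G(x,y)\,u(y)\,dy$, an explicit identification of $v'$ as a constant multiple of $u$ plus a bounded kernel applied to $u$ (hence $v'\in L^1$), verification of the distributional identity by Fubini and the fundamental-solution property, and uniqueness via the homogeneous classical ODE; the paper carries out precisely these steps with the explicit $\sinh/\cosh$ kernels, obtaining $v'=-u+\int_{-1}^1 u(s)F(\cdot,s)\,ds$ with $F$ continuous and bounded. One correction to your description of the kernel: since $G$ is symmetric, the first derivative $\pa_y G$ \emph{does} jump (by $1/\eps$) across the diagonal, exactly as $\pa_x G$ does, whereas the mixed derivative $\pa_x\pa_y G$ is \emph{continuous} there; it is the Dirac mass generated by the jump of $\pa_y G$ when you differentiate the split integral in $x$ that produces the term proportional to $u$ in $v'$, not a ``jump of $\pa_x\pa_y G$.'' This misattribution does not derail the plan --- step (1) only needs boundedness of $\pa_y G$, which a jump discontinuity does not destroy, and the correct jump bookkeeping emerges automatically once the splitting at $y=x$ is carried out --- but it should be fixed in a write-up.
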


\begin{proof}
The result follows from the explicit formula
\begin{align*}
  v(x) = \int_{-1}^1 u(s)\frac{\pa U_\eps}{\pa s}(x,s)ds, 
  \quad x\in(-1,1),
\end{align*}
where
\begin{align*}
  U_\eps(x,s) = \frac{1}{\sqrt{\eps}\sinh(\frac{2}{\sqrt{\eps}})}\cdot
  \begin{cases}
  -\sinh(\frac{1+x}{\sqrt{\eps}})\sinh(\frac{1-s}{\sqrt{\eps}})
  &\quad\mbox{for }x\le s, \\
  -\sinh(\frac{1-x}{\sqrt{\eps}})\sinh(\frac{1+s}{\sqrt{\eps}}) 
  &\quad\mbox{for }x>s,
  \end{cases}
\end{align*}
such that
\begin{align*}
  \frac{\pa U_\eps}{\pa s}(x,s) = \frac{1}{\eps\sinh(\frac{2}{\sqrt{\eps}})}\cdot
  \begin{cases}
  \sinh(\frac{1+x}{\sqrt{\eps}})\cosh(\frac{1-s}{\sqrt{\eps}})
  &\quad\mbox{for }x\le s, \\
  -\sinh(\frac{1-x}{\sqrt{\eps}})\cosh(\frac{1+s}{\sqrt{\eps}})
  &\quad\mbox{for }x>s.
  \end{cases}
\end{align*}
The function $U_\eps$ is the fundamental solution of $v\mapsto -\eps v''+v$. Indeed, let $\phi\in\mathcal{D}(-1,1)$ be a test function. We integrate by parts twice and use an addition formula for the hyperbolic sine to find that
\begin{align*}
  -\eps\bigg\langle&\frac{\pa^3 U_\eps}{\pa x^2\pa s},\phi\bigg\rangle
  = -\eps\bigg\langle\frac{\pa U_\eps}{\pa s},\phi''\bigg\rangle \\
  &= -\frac{1}{\sinh(\frac{2}{\sqrt{\eps}})}
  \int_{-1}^s\sinh\bigg(\frac{1+x}{\sqrt{\eps}}\bigg)
  \cosh\bigg(\frac{1-s}{\sqrt{\eps}}\bigg)\phi''(x)dx \\
  &\phantom{xx}
  + \frac{1}{\sinh(\frac{2}{\sqrt{\eps}})}
  \int_s^1\sinh\bigg(\frac{1-x}{\sqrt{\eps}}\bigg)
  \cosh\bigg(\frac{1+s}{\sqrt{\eps}}\bigg)\phi''(x)dx \\
  &= -\frac{1}{\eps \sinh(\frac{2}{\sqrt{\eps}})}
  \int_{-1}^s\sinh\bigg(\frac{1+x}{\sqrt{\eps}}\bigg)
  \cosh\bigg(\frac{1-s}{\sqrt{\eps}}\bigg)\phi(x)dx \\
  &\phantom{xx}
  + \frac{1}{\eps \sinh(\frac{2}{\sqrt{\eps}})}
  \int_s^1\sinh\bigg(\frac{1-x}{\sqrt{\eps}}\bigg)
  \cosh\bigg(\frac{1+s}{\sqrt{\eps}}\bigg)\phi(x)dx \\
  &\phantom{xx}- \frac{\phi'(s)}{\sinh(\frac{2}{\sqrt{\eps}})}
  \bigg(\sinh\bigg(\frac{1+s}{\sqrt{\eps}}\bigg)
  \cosh\bigg(\frac{1-s}{\sqrt{\eps}}\bigg)
  + \sinh\bigg(\frac{1-s}{\sqrt{\eps}}\bigg)
  \cosh\bigg(\frac{1+s}{\sqrt{\eps}}\bigg)\bigg) \\
  &= -\bigg\langle\frac{\pa U_\eps}{\pa s},\phi\bigg\rangle - \phi'(s).
\end{align*}
This gives
\begin{align*}
  \langle -\eps v''+v,\phi\rangle 
  = \int_{-1}^1 u(s)\bigg\langle -\eps \frac{\pa^3 U_\eps}{\pa x^2\pa s}
  + \frac{\pa U_\eps}{\pa s},\phi\bigg\rangle ds
  = -\int_{-1}^1 u(s)\phi'(s)ds = \langle u',\phi\rangle.
\end{align*}
Furthermore, $v$ satisfies the boundary conditions since 
$(\pa U_\eps/\pa s)(\pm 1,s)=0$. 

Next, we compute, setting $\eps=1$ to simplify the presentation,
\begin{align*}
  \langle v',\phi\rangle &= -\int_{-1}^1 u(s)\bigg\langle
  \frac{\pa U_1}{\pa s},\phi'\bigg\rangle ds \\
  &= -\frac{1}{\sinh 2}\int_{-1}^1 u(s)\bigg(\int_{-1}^s
  \sinh(1+x)\cosh(1-s)\phi'(x)dx \\
  &\phantom{xx}- \int_s^1\sinh(1-x)\cosh(1+s)\phi'(x)dx\bigg) \\
  &= -\frac{1}{\sinh 2}\int_{-1}^1 u(s)\bigg\{\phi(s)
  \big(\sinh(1+s)\cosh(1-s)+\sinh(1-s)\cosh(1+s)\big) \\
  &\phantom{xx} - \frac{1}{\sinh 2}\int_{-1}^s\cosh(1+x)\cosh(1-s)
  \phi(x)dx \\
  &\phantom{xx}- \frac{1}{\sinh 2}\int_s^1\cosh(1-x)\cosh(1+s)\phi(x)dx
  \bigg\}ds.
\end{align*}
Introducing the continuous function
\begin{align*}
	F(x,s)=\frac{1}{\sinh 2}\cdot \begin{cases}
		\cosh(1+x)\cosh(1-s) & \quad\mbox{for }x\leq s,\\
		\cosh(1-x)\cosh(1+s) & \quad\mbox{for }x>s,
	\end{cases}
\end{align*}
it follows from an addition formula for the hyperbolic functions and Fubini's theorem that
\begin{align*}
  \langle v',\phi\rangle &= -\int_{-1}^1\bigg(u(s)\phi(s)
  - u(s)\int_{-1}^1 F(x,s)\phi(x)dx\bigg)ds \\ 
  &= -\langle u,\phi\rangle
  + \int_{-1}^1\int_{-1}^1 u(s) F(x,s)\phi(x)dsdx
  = \bigg\langle-u+\int_{-1}^1 u(s)F(\cdot,s)ds,\phi\bigg\rangle
\end{align*}
and consequently, $v'=-u+\int_{-1}^1 u(s)F(\cdot,s)ds$. The boundedness of $F$ is sufficient to conclude the $W^{1,1}(\Omega)$ regularity of $v$:
\begin{align*}
  \|v'\|_{L^1(-1,1)} &\le \|u\|_{L^1(-1,1)}
  + \int_{-1}^1\|u\|_{L^1(-1,1)}\|F(x,\cdot)\|_{L^\infty(-1,1)}dx \\
  &= \|u\|_{L^1(-1,1)}\bigg(1 + \frac{1}{\sinh 2}\int_{-1}^1 
  \cosh(1+x)\cosh(1-x)dx\bigg) \\
  &= \|u\|_{L^1(-1,1)}\bigg(\frac32 + \coth 2\bigg).
\end{align*}

Finally, if $v_1$ and $v_2$ are two solutions in the sense of distributions, the difference satisfies the classical differential equation $-\eps(v_1-v_2)''+(v_1-v_2)=0$ in $(-1,1)$ with homogeneous boundary conditions. The unique solution even in the space of distributions is $v_1-v_2=0$, which proves the uniqueness of solutions to the original elliptic problem.
\end{proof}

\end{appendix}



\begin{thebibliography}{11}
\bibitem{AlXi23} M.~Alfaro and D.~Xiao. Lotka--Volterra competition-diffusion system: the critical competition case. {\em Commun. Partial Differ. Eqs.} 48 (2023), 182--208.

\bibitem{BaPr12} V.~Barbu and T.~Precupanu. {\em Convexity and Optimization in Banach Spaces}. Springer, Dortrecht, 2012.

\bibitem{BrVa07} H.~Br\'ezis and J.~Van Schaftingen. Boundary estimates for elliptic systems with $L^1$-data. {\em Calc. Var.} 30 (2007), 369--388.

\bibitem{Bri49} H.~Brinkman. A calculation of the viscous force exerted by a flowing fluid on a dense swarm of particles. {\em Flow Turbul. Conbust.} 1 (1949), 27--34.

\bibitem{Bro80} P.~Brown. Decay to uniform states in ecological interactions. {\em SIAM J. Appl. Math.} 38 (1980), 22--37.

\bibitem{BuTr83} S.~Busenberg and C.~Travis. Epidemic models with spatial spread due to population migration. {\em J. Math. Biol.} 16 (1983), 181--198.

\bibitem{CDJ19} L.~Chen, E.~Daus, and A.~J\"ungel. Rigorous mean-field limit and cross diffusion. {\em Z. Angew. Math. Phys.} 70 (2019), no. 122, 21 pages.

\bibitem{ChJu06} L.~Chen and A.~J\"ungel. Analysis of a parabolic cross-diffusion population model without self-diffusion. {\em J. Differ. Eqs.} 224 (2006), 39--59.

\bibitem{CDJ18} X.~Chen, E.~Daus, and A.~J\"ungel. Global existence analysis of cross-diffusion population systems for multiple species. {\em Arch. Ration. Mech. Anal.} 227 (2018), 715--747. 

\bibitem{DDMS24} N.~David, T~D\c{e}biec, M.~Mandal, and M.~Schmidtchen. A degenerate cross-diffusion system as the inviscid limit of a nonlocal tissue growth model. {\em SIAM J. Math. Anal.} 56 (2024), 2090--2114.

\bibitem{DeSc20} T.~D\c{e}biec and M.~Schmidtchen. Incompressible limit for a two-species tumour model with coupling through Brinkman’s law
in one dimension. {\em Acta Appl. Math.} 169 (2020), 593--611.

\bibitem{DEF18} M.~Di Francesco, A.~Esposito, and S.~Fagioli. Nonlinear degenerate cross-diffusion systems with nonlocal interaction. {\em Nonlin. Anal.} 169 (2018), 94--117.

\bibitem{DHJ23} P.-E.~Druet, K.~Hopf, and A.~J\"ungel.
Hyperbolic--parabolic normal form and local classical solutions for cross-diffusion systems with incomplete diffusion. {\em Commun. Partial Differ. Eqs.} 48 (2023), 863--894.

\bibitem{Eva10} L.~Evans. {\em Partial Differential Equations}. Amer. Math. Soc., Providence, 2010.

\bibitem{Fis17} J.~Fischer. Weak--strong uniqueness of solutions to entropy-dissipating reaction--diffusion equations. {\em Nonlin. Anal.} 159 (2017), 181--207.

\bibitem{Gri88} P.~Grindrod. Models of individual aggregation or clustering in single and multi-species communities. {\em J. Math. Biol.} 26 (1988), 651--660.

\bibitem{HeZu23} M.~Herda and A.~Zurek. Study of an entropy dissipating finite volume scheme for a nonlocal cross-diffusion system. {\em ESAIM: M2AN}, 57(3) (2023), 1589--1617. 

\bibitem{HoJu20} P.~Holzinger and A.~J\"ungel. Large-time asymptotics for a matrix spin drift-diffusion model. {\em J. Math. Anal. Appl.} 486 (2020), no.~123887, 20 pages.


\bibitem{JPZ24} A.~J\"ungel, S.~Portisch, and A.~Zurek. A convergent finite-volume scheme for nonlocal cross-diffusion systems for multi-species populations. {\em ESAIM: M2AN} 58(2) (2024), 759--792.

\bibitem{JuVe23} A.~J\"ungel and M.~Vetter. Degenerate drift-diffusion models for memristors. Submitted for publication, 2023. arXiv:2311.16591.


\bibitem{JuZu20} A.~J\"ungel and A.~Zurek. A finite-volume scheme for a cross-diffusion model arising from interacting many-particle population systems. In: R.~Kl\"ofkorn, E.~Keilegavlen, F.~Radu, and J.~Fuhrmann (eds.), {\em Finite Volumes for Complex Applications IX -- Methods, Theoretical Aspects, Examples}. Springer Proceedings in Mathematics \& Statistics, vol. 323. Springer, Cham, 2020, pp.~223-231. 

\bibitem{KrVa20} A.~Krause and R.~Van Gorder. A non-local cross-diffusion model of population dynamics II: Exact, approximate, and numerical traveling waves in single- and multi-species populations. {\em Bull. Math. Biol.} 82 (2020), no.~113, 30 pages.

\bibitem{KKMGV17} L.~Kurowski, A.~Krause, H.~Mizuguchi, P.~Grundrod, R.~Van Gorder. Two-species migration and clustering in two-dimensional domains. {\em Bull. Math. Biol.} 79 (2017), 2302--2333.


\bibitem{Lut02} F.~Lutscher. A model for speed adaptation of individuals and existence of weak solutions. {\em Europ. J. Appl. Math.} 14 (2002), 291--311.

\bibitem{Nas14} E.~Nasreddine. A model of individual clustering with vanishing diffusion. {\em Asympt. Anal.} 88 (2014), 93--110.

\bibitem{Sma76} S.~Smale. On the differential equations of species in competition. {\em J. Math. Biol.} 3 (1976), 5--7.

\bibitem{SuYa15} T.~Suzuki and Y.~Yamada. Global-in-time behavior of Lotka--Volterra system with diffusion: skew-symmetric case. {\em Indiana Math. J.} 64 (2015), 181--216. 

\bibitem{TKKV20} N.~Taylor, H.~Kim, A.~Krause, R.~Van Gorder. A non-local cross-diffusion model of population dynamics I: Emergent spatial and spatiotemporal patterns. {\em Bull. Math. Bio.} 82 (2020), no.~112, 40 pages.


\bibitem{Zei90} E.~Zeidler. {\em Nonlinear Functional Analysis and Its Applications}, vol.~II/A. Springer, New York, 1990.
\end{thebibliography}
\end{document}